\documentclass[oneside,a4paper,english]{article}
\usepackage[utf8]{inputenc}
\usepackage[T1]{fontenc}
\usepackage{babel}
\usepackage{amssymb}
\usepackage{graphicx}
\usepackage{mathtools}
\usepackage{amsthm}
\usepackage{xfrac}
\usepackage{amsmath}
\usepackage{enumitem}
\usepackage{pgfplots}
\usepackage{pgfkeys}
\usepackage{array}
\usepackage{booktabs}
\usepackage{caption}
\usepackage{subfig}
\usepackage[left=2.4cm,right=2.4cm]{geometry}
\pgfplotsset{compat=1.15}
\usepackage{mathrsfs}
\usepackage{float}
\usetikzlibrary{arrows}
\usepackage{tcolorbox}
\usepackage{xcolor}
\usepackage{authblk}

\theoremstyle{definition} 
\newtheorem{definition}{Definition}[section]
\newtheorem{example}[definition]{Example}
\newtheorem{remark}[definition]{Remark}
\newtheorem{notation}[definition]{Notation}

\newtheorem{theorem}[definition]{Theorem}
\newtheorem{corollary}[definition]{Corollary}
\newtheorem{lemma}[definition]{Lemma}
\newtheorem{proposition}[definition]{Proposition}
\newtheorem*{maintheorem}{Main Theorem}
\newtheorem*{theorem*}{Theorem}

\newcommand{\F}{\mathbb F}
\newcommand{\Fq}{\mathbb F_q}
\newcommand{\Fm}{\mathbb F_{q^m}}

\DeclareMathOperator{\rk}{rk}
\DeclareMathOperator{\wt}{wt}
\DeclareMathOperator{\rank}{rank}
\DeclareMathOperator{\drk}{d_{rk}}
\DeclareMathOperator{\Trm}{Tr_{q^m/q}}

\usepackage[colorlinks=true, linkcolor=blue, citecolor=red, urlcolor=blue]{hyperref}

\title{Covering radius of rank-metric codes \\ via covering lifts and clubs}

\author[1]{Giuseppe Marino}
\author[1]{Alessandro Neri}
\author[1]{Alessia Vallefuoco}

\affil[1]{Department of Mathematics and Applications ``R. Caccioppoli'', University of Naples Federico II}

\affil[ ]{{\small
\texttt{giuseppe.marino@unina.it},
\texttt{alessandro.neri@unina.it},
\texttt{alessia.vallefuoco@unina.it}}
}

\date{}

\begin{document}
\maketitle

\begin{abstract}
We introduce a generator-matrix-based geometric approach to the covering radius of $\Fm$-linear rank-metric codes. Starting from the $q$-system associated with the code, we define the notion of covering lift and show that the covering radius can be recovered from the hyperplane weights of such lifts. This yields an exact criterion, in terms of the length and effective length of the code, for the covering radius to attain its largest possible value, together with the upper bound $\rho(\mathcal{C})\le \min\{m,n\}-1$ in all remaining cases. We then specialize our approach to $1$-dimensional codes, for which the effective length coincides with the minimum rank distance. In this setting, codes attaining $\rho(\mathcal{C})= \min\{m,n\}-1$ are related to covering lifts that are clubs. More generally, we characterize this case through linear maps into suitable quotient spaces, obtaining equivalent interpretations in terms of generalised evasive subspaces and auxiliary matrix rank-metric codes. These results determine the covering radius for several boundary values of the minimum distance and for all $1$-dimensional codes with extension degree $m\in\{3,4,5\}$. For $m=6$ and every $q$ we provide constructions and, for $q\in\{2,3\}$, computational results showing that $1$-dimensional rank-metric codes with the same length and minimum distance can have different covering radii.
\end{abstract}

\medskip
\noindent\textbf{Keywords:} rank-metric codes; covering radius; clubs; scattered subspaces.

\smallskip
\noindent\textbf{MSC2020:} 94B05, 94B75 (primary); 94B27, 51E20, 51E22 (secondary).

\section{Introduction}
\paragraph{Background.} The covering radius is a classical parameter in coding theory. Given a code $\mathcal C$ in a metric space, its covering radius measures the largest possible distance between an element of the ambient space and the code. In this paper, we study the covering radius in the rank-metric setting. Let $\Fm$ be an extension of degree $m$ of $\Fq$. For $v=(v_1,\ldots,v_n)\in\Fm^n$, its rank weight is
$$\rk(v)=\dim_{\Fq}(\langle v_1,\ldots,v_n\rangle_{\Fq}),$$
and the associated rank distance is defined as $\drk(u,v)=\rk(u-v)$, for $u,v\in\Fm^n.$ An $\Fm$-linear rank-metric code $\mathcal C$ of length $n$ and dimension $k$ is an $\Fm$-subspace of $\Fm^n$ of dimension $k$. If $d$ denotes its minimum rank distance, we refer to such a code as an $[n,k,d]_{q^m/q}$ code. In this setting, the covering radius of $\mathcal C$ is $$\rho(\mathcal C)=\max_{x\in\Fm^n} d_{\rm rk}(x,\mathcal C)=\max_{x\in\Fm^n}\min_{c\in\mathcal C}\rk(x-c).$$
Equivalently, it is the smallest integer $\rho$ such that the balls of radius $\rho$ centered at the codewords cover the whole ambient space. Hence, while the minimum rank distance is an intrinsic parameter of the code, determined by the ranks of its nonzero codewords, the covering radius is an ambient parameter: it measures how far vectors of the whole space $\Fm^n$ can lie from $\mathcal C$. Its determination requires controlling simultaneously all the cosets of the code, and is in general a considerably difficult problem.
Since every vector of $\Fm^n$ has rank at most $m$, and every coset of $\mathcal C$ contains a vector with at most $n-k$ nonzero entries, one always has $\rho(\mathcal C)\le\min\{m,n-k\}$; see Section~\ref{sec:comparison}.
The study of covering radii links coding theory with covering problems, and has led to a rich interaction between coding theory, combinatorics and finite geometry, see e.g. \cite{CHLL1997}.

\medskip 
The covering radius has been extensively investigated in the Hamming-metric setting, see e.g. \cite{CHLL1997,CKMS1985}. From a geometric point of view, linear codes in the Hamming metric can be represented through projective systems, allowing several code parameters to be translated into incidence properties of finite projective spaces; see e.g. \cite{TV2007}. By this correspondence, the covering radius of a linear code admits a geometric counterpart in terms of saturating sets, geometric objects that were first introduced by Ughi in \cite{U1987}. This correspondence has led to a fruitful interaction between coding theory and finite geometry, and many constructions of
linear codes with small covering radius arise from geometric constructions of saturating sets, see e.g. \cite{D1995, DGMP2011, D2022}.

\medskip

The situation is rather different for the rank-metric setting. Although rank-metric codes have been widely studied, comparatively fewer results are known about their covering radius. A systematic study of the covering radius of rank-metric codes, with general upper and lower bounds, was carried out by Gadouleau and Yan in \cite{GY2008}, while further upper bounds, including the dual distance and external distance bounds, were established by Byrne and Ravagnani for matrix rank-metric codes \cite{BR2017}. More recently, Bonini, Borello and Byrne introduced rank-saturating systems, that are the $q$-analogues of saturating sets, and obtained a geometric interpretation of the rank-metric covering radius \cite{BBB2023}. This approach relies on the parity-check matrix of the code, in analogy with the classical correspondence between the covering radius of linear codes and saturating sets. Constructions of rank-saturating systems of the smallest possible rank can be found e.g. in \cite{BBM2024, BBB2023, Z2024}. A further geometric interpretation for matrix rank-metric codes whose minimum distance equals their number of rows was given in terms of generalised evasive subspaces in \cite{GRSZ2024}.

\paragraph{Our contributions.} In this paper, we develop a different geometric approach to the covering radius of $\Fm$-linear rank-metric codes, based on the generator matrix of the code rather than on its parity-check matrix.
 
Let $\mathcal C$ be an $[n,k]_{q^m/q}$ code with generator matrix $G$, and let $U\subseteq \Fm^k$ be the $\Fq$-span of the columns of $G$. The dimension $n'$ of $U$ is the \emph{effective length} of $\mathcal C$, and $\delta=n-n'$ is the number of coordinates which, up to equivalence, vanish on the whole code.
The starting point is an elementary observation (Lemma~\ref{Lemmacr}): a vector $\alpha$ is far from $\mathcal C$ exactly when, after adding $\alpha$ as a new row of $G$, all the codewords that really involve $\alpha$ have large rank. Following \cite{R2020}, the ranks of these codewords can be read as weights of hyperplanes, where the weight of a subspace is the dimension of its intersection with the $\Fq$-span $\widetilde U\subseteq\Fm^{k+1}$ of the columns of the extended matrix. This subspace projects onto $U$ and, as long as $\delta\le m$, it can be assumed to meet the point $P=\langle(0,\ldots,0,1)\rangle_{\Fm}$ in dimension $\delta$; we call such subspaces \emph{$\delta$-covering lifts} of $U$ (Definition~\ref{def:covering_lift}). In a covering lift, the hyperplanes through $P$ have weights determined by $U$ alone (Lemma~\ref{lem:cl_weights}), while the hyperplanes not through $P$ encode the distance of $\alpha$ from $\mathcal C$: the smaller their largest weight $\mu_P(\widetilde U)$, the farther $\alpha$ is from the code. This gives our first main result.

\begin{maintheorem}[see Theorem \ref{thm:hyp}]
Let $\mathcal C$ be an $[n,k]_{q^m/q}$ code with effective length $n'$, and assume that $\delta=n-n'\leq m$. Then
$$\rho(\mathcal C)=n-\min_{\widetilde U}\mu_P(\widetilde U),$$
where $\widetilde U$ runs over all $\delta$-covering lifts of $U$.
\end{maintheorem}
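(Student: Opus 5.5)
The plan is to pass between vectors $\alpha\in\Fm^n$ and extended matrices. Write $G'=\begin{pmatrix} G\\ \alpha\end{pmatrix}\in\Fm^{(k+1)\times n}$ and let $\widetilde U\subseteq\Fm^{k+1}$ be the $\Fq$-span of its columns. By Lemma~\ref{Lemmacr}, $d_{\rm rk}(\alpha,\mathcal C)=\min\{\rk(x G+\alpha): x\in\Fm^k\}$. These are exactly the codewords $(x,1)G'$, up to $\Fm^*$-scaling, of the extended code that involve the last row.

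By \cite{R2020}, the rank of the codeword $uG'$ equals $n-\dim_{\Fq}(\widetilde U\cap u^\perp)$, at least when $\dim_{\Fq}\widetilde U=n$. The hyperplanes $u^\perp$ with $u_{k+1}\neq 0$ are precisely those not through $P$. Hence
\[
d_{\rm rk}(\alpha,\mathcal C)=n-\max_{H\not\ni P} \dim_{\Fq}(H\cap\widetilde U)=n-\mu_P(\widetilde U),
\]
and $\rho(\mathcal C)=n-\min\mu_P(\widetilde U)$, with the minimum taken over all such column spans. One caveat: the formula $\rk=n-\dim(\widetilde U\cap H)$ requires the columns of $G'$ to be $\Fq$-independent. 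If they are not, I will use the multiset/system version, in which the weight counts $n-\rk(uG')=\dim_{\Fq}\ker$ of the map $\Fq^n\to \Fm^{k+1}$ composed with $u$. I would phrase everything through this linear map $\varphi_\alpha:\Fq^n\to\Fm^{k+1}$, $e_i\mapsto$ $i$-th column.

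Next I normalize. Up to equivalence (right multiplication by $\mathrm{GL}_n(\Fq)$, which preserves both $\rho$ and ranks), I take $G=(G_1\mid 0)$ with $G_1$ having $n'$ independent columns spanning $U$. Translating $\alpha$ by a codeword does not change the distance. Since $\delta\le m$, I choose the last $\delta$ coordinates of $\alpha$ to realize any $\delta$-dimensional $\Fq$-subspace of $\Fm$. The lifts of the last $\delta$ columns are then $(0,\dots,0,\alpha_j)$, lying in $P$.

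The key step is to show that, without loss of generality, maximizing the distance forces $\alpha_{n'+1},\dots,\alpha_n$ to be $\Fq$-independent, so that $\widetilde U$ has dimension $n$ and meets $P$ in dimension exactly $\delta$. This is a $\delta$-covering lift, and every such lift arises from some $\alpha$. I argue monotonicity as follows. Replacing dependent tail entries by independent ones only enlarges ranks: $\rk(xG+\alpha)$ is at least the rank computed on the first $n'$ coordinates, plus a contribution from the tail. Concretely, choosing the tail independent and modulo the span of the head entries maximizes the rank for every $x$ simultaneously. But this is stronger than needed, so I will instead prove the two inequalities directly.

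For ``$\ge$'': every covering lift $\widetilde U$ is the column span of some $G'$ of the normalized form, which gives a vector $\alpha$ with $d(\alpha,\mathcal C)=n-\mu_P(\widetilde U)$. For ``$\le$'': given an arbitrary $\alpha$, I modify the tail to independent values, which can only increase $d(\alpha,\mathcal C)$, and this yields a lift. I expect the main obstacle to be this bookkeeping: ensuring the lifted columns are independent, and handling the case where the tail entries fall into the span of the head entries for some $x$. I would resolve it by working with $\rk(v)\ge$ the rank of the restriction, plus the rank of the tail modulo the head span.
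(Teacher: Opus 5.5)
Your proposal is correct and follows essentially the same route as the paper: Lemma~\ref{Lemmacr} with the extended matrix, the rank--weight identity restricted to hyperplanes not through $P$, the normal form $G=(G^1\cdots G^{n'}\,0\cdots 0)$, replacing the dependent tail entries of $\alpha$ (possible since $\delta\le m$) to make $\mathcal C_\alpha$ nondegenerate without decreasing $\drk(\alpha,\mathcal C)$, and conversely reconstructing $\alpha$ from a basis of a lift adapted to $\widetilde U\cap P$; the paper only packages your two inequalities through the threshold equivalence \eqref{cond:lift1}. Two points need care: the new tail entries must extend a basis of $\langle\alpha_{n'+1},\ldots,\alpha_n\rangle_{\Fq}$, since arbitrary independent values need not increase ranks whereas span containment gives monotonicity at once; and for $\delta=0$ you must take $\alpha$ with $\drk(\alpha,\mathcal C)=\rho(\mathcal C)\ge 1$, so that $\alpha\notin\mathcal C$ and $\widetilde U$ spans $\Fm^{k+1}$, as required for it to be a lift.
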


As a consequence, the effective length decides when the covering radius reaches its largest possible value $m$: this happens if and only if $\delta\ge m$, and otherwise $\rho(\mathcal C)\leq\min\{m,n\}-1$ (Proposition~\ref{prop:uppb}). For $0<\delta<m$ this improves on the dual distance bound of \cite{BR2017}, see Section~\ref{sec:comparison}.
 
\medskip
 
We then focus on $1$-dimensional codes, where the picture becomes much more concrete. Here the effective length coincides with the minimum distance $d$, and $U$ is simply a $d$-dimensional $\Fq$-subspace $V$ of $\Fm$ (Remark~\ref{rem:n'd}). Contrary to the Hamming metric, where all $[n,1,d]_q$ codes are equivalent, different choices of $V$ may give inequivalent codes (Lemma~\ref{lem:equivalence1d}). The covering lifts now lie in $\Fm^2$, whose hyperplanes are just points, so the Main Theorem asks for a lift in which all points other than $P$ have small weight. In the extreme case, all these points have weight at most one: such subspaces are known as \emph{clubs} \cite{FS2006, DBVdV2016, NPSZ2024}. More precisely, a $t$-club of rank $n$ is an $n$-dimensional $\Fq$-subspace of $\Fm^2$ having one point of weight $t$ and all other points of weight at most one.
 
\begin{theorem*}[Club characterization, see Corollary~\ref{cor:corrclub}]
Let $\mathcal C$ be an $[n,1,d]_{q^m/q}$ code with $m\geq n$. Then $\rho(\mathcal C)=n-1$ if and only if $V$ has an $(n-d)$-covering lift which is an $(n-d)$-club, with $P$ as the point of weight $n-d$.
\end{theorem*}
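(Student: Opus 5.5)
We derive the statement from the Main Theorem (Theorem~\ref{thm:hyp}) specialised to $k=1$. By Remark~\ref{rem:n'd}, a $1$-dimensional code has effective length $n'=d$, so $\delta=n-d$. Since $m\ge n\ge n-d$, we have $\delta\le m$ and the Main Theorem applies:
\[
\rho(\mathcal C)=n-\min_{\widetilde U}\mu_P(\widetilde U),
\]
where $\widetilde U$ runs over the $(n-d)$-covering lifts of $V$ in $\Fm^2$. In $\Fm^2$ the hyperplanes are exactly the points. Hence $\mu_P(\widetilde U)$ is the largest weight $w_{\widetilde U}(Q)$ over the points $Q\neq P$. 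It follows that $\rho(\mathcal C)=n-1$ if and only if some covering lift has $\mu_P(\widetilde U)=1$.

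To make this precise, we first show that $\mu_P(\widetilde U)\ge 1$ for every covering lift. The lift $\widetilde U$ has $\Fq$-dimension $n$ and meets $P$ in dimension $n-d<n$, since $d\ge 1$. So $\widetilde U$ contains a vector outside $P$, and this vector spans a point $Q\neq P$ of weight at least $1$. Consequently $\rho(\mathcal C)\le n-1$, which agrees with Proposition~\ref{prop:uppb} because $\delta<m$. The equality $\rho(\mathcal C)=n-1$ therefore holds exactly when some $(n-d)$-covering lift has all points $Q\neq P$ of weight at most one.

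It remains to identify such a lift with an $(n-d)$-club having $P$ as its point of weight $n-d$. The lift has rank $n$, the point $P$ has weight exactly $n-d$ by Definition~\ref{def:covering_lift}, and every other point has weight at most one. This is precisely the definition of an $(n-d)$-club of rank $n$ with $P$ as the heavy point. The converse direction is immediate from the same definitions.

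One degenerate case needs a check. If $n-d=1$, then $P$ has weight $1$ and the "club" condition just says that $\widetilde U$ is scattered. Presumably the paper's convention on clubs either allows this or it is harmless, but we will verify that the definitions match in this case. Apart from that, the only substantive input is the Main Theorem, and the argument reduces to bookkeeping about hyperplanes being points when $k+1=2$.
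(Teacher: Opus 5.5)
Your proof is correct and follows the paper's route. You specialise Theorem~\ref{thm:hyp} to $k=1$, where hyperplanes are points and $\wt_{\widetilde U}(P)=\delta$ (this comes from Lemma~\ref{lem:cl_weights} rather than directly from the definition), so $\mu_P(\widetilde U)\le 1$ is exactly the club condition. The only difference is that you obtain $\rho(\mathcal C)\le n-1$ from the direct observation $\mu_P(\widetilde U)\ge 1$ instead of citing Corollary~\ref{cor:uppb}, which is a valid and more elementary shortcut. Your worry about small $\delta$ is unnecessary: the paper defines $t$-clubs for every nonnegative $t$, which covers both $\delta=1$ and the case $\delta=0$ (i.e.\ $d=n$) that you did not mention.
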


A similar sufficient condition holds when $m<n<m+d$ (Corollary~\ref{cor:n>m}). In this way, known constructions \cite{DBVdV2016, NPSZ2024} and nonexistence results \cite{DBVdV2022} for clubs translate directly into information on covering radii. Beyond clubs, we characterize when $\rho(\mathcal C)=\min\{m,n\}-1$ in terms of an $\Fq$-linear map $f:V\to\Fm$ and a $\delta$-dimensional $\Fq$-subspace $W\subseteq\Fm$, which together describe a suitable covering lift (Proposition~\ref{prop:caractd}). This condition can be read in terms of generalised evasive subspaces in the sense of \cite{GRSZ2024} (Remark~\ref{rem:d:evasive}), and also in terms of auxiliary matrix codes: the covering radius of $\mathcal C$ equals $\delta$ plus the largest covering radius among a family of $\Fq$-linear matrix codes $\mathcal A_{V,W}$ naturally attached to $V$ (Theorem~\ref{thm:assmat}).
 
\medskip
These geometric descriptions lead to several concrete determinations.
For $1$-dimensional rank-metric codes we completely determine the covering radius in the cases $d\in\{1,2,\min\{m,n\}-1,\min\{m,n\}\}$ (Theorem~\ref{thm:boundary}) and, combining this with the nonexistence of $2$-clubs of rank $5$, for every extension degree $m\in\{3,4,5\}$ (Proposition~\ref{prop:special}). In particular, every $[4,1]_{q^4/q}$ code has covering radius $3$, which recovers the main result proved in \cite{Z2024} via rank-saturating systems. For $m=6$, a known construction of $3$-clubs shows that $[6,1,3]_{q^6/q}$ codes for which $V$ is a scalar multiple of $\F_{q^3}$ have covering radius $5$ (Proposition~\ref{prop:consm6}), and MAGMA computations for $q\in\{2,3\}$ deal with the remaining open cases (Table~\ref{tab:m=6}). These computations show that $[6,1,3]_{q^6/q}$ codes can have covering radius $4$ or $5$: thus, the covering radius of a $1$-dimensional rank-metric code is not determined by its length and minimum distance, in sharp contrast with the Hamming metric.

\paragraph{Outline.} The paper is organized as follows. In Section \ref{sec:preliminaries}, we recall the basic notions concerning rank-metric codes, $q$-systems and $t$-clubs on the projective line. Section \ref{sec:mainthm} develops the general geometric approach, introduces the notion of covering lift and proves the main characterization and the resulting bounds in terms of the effective length. In Section \ref{sec:k1}, we specialize the results to $1$-dimensional codes, establish the connection with $t$-clubs, generalised evasive subspaces and auxiliary matrix codes, and apply these tools to boundary cases and small extension degrees. Finally, in Section \ref{sec:comparison}, we compare our results with the bounds previously obtained by Gadouleau and Yan \cite{GY2008}, by Byrne and Ravagnani \cite{BR2017} and by Gruica, Ravagnani, Sheekey and Zullo \cite{GRSZ2024}.

\section{Preliminaries} \label{sec:preliminaries}

\subsection{Rank-metric codes and $q$-systems}
We begin by introducing rank-metric codes, which provide the main setting for this paper. To this end, we fix some notation: throughout the paper, $q$ denotes a prime power, $m,n$ and $k$ denote positive integers and let $[n]=\{1,\ldots,n\}$. We consider the field extension $\Fm$ of degree $m$ of $\Fq$. Moreover, we need to endow $\Fm^n$ with a suitable distance.
\begin{definition}
    For $v=(v_1,\ldots,v_n)\in\Fm^n$, we can define its \textbf{rank weight} as 
    \begin{equation*}
        \rk(v)=\dim_{\Fq}(\langle v_1,\ldots,v_n\rangle_{\Fq}).
    \end{equation*}
\end{definition}
Let $\mathcal{B}$ be an $\Fq$-basis of $\Fm$. Then every element of $\Fm$ can be uniquely written as a linear combination of the elements in $\mathcal{B}$. In particular, for $v=(v_1,\ldots,v_n)\in\Fm^n$, every $i$-th coordinate can be identified with its coordinate vector with respect to $\mathcal{B}$, i.e. $v_i\equiv_{\mathcal{B}}(v_{1i},\ldots,v_{mi})\in\Fq^m$. Taking them as vector columns, we get an $m\times n$ matrix $\phi_{\mathcal B}(v)$ defining $v$. Equivalently, after fixing a basis $\mathcal B$, the vector $v\in\mathbb F_{q^m}^n$ can be represented by $\phi_{\mathcal B}(v)\in\mathbb F_q^{m\times n}$, and its rank weight coincides with the usual rank of this matrix
$$\rk(v)=\rank(\phi_{\mathcal B}(v)).$$
This rank weight induces a metric on $\Fm^n$, called the \textbf{rank metric}, which is defined by
\begin{equation*}
    \begin{array}{rccl}
\drk :\; &\Fm^n \times \Fm^n &\longrightarrow &\mathbb{N}_0,\\[2mm]
&(u,v) &\longmapsto &\rk(u-v).
\end{array}
\end{equation*}

\begin{definition}
A ($\Fm$-linear) \textbf{rank-metric code} is an $\Fm$-linear subspace $\mathcal{C}$ of $\Fm^n$. The positive integer $n$ is the \textbf{length} of the code, its \textbf{dimension} is the $\Fm$-dimension and its \textbf{minimum rank distance} is
\begin{equation*}
\mathrm{d}=\drk(\mathcal{C})=\min\{\rk(v):v\in\mathcal{C}\setminus\{0\}\}.
\end{equation*}
\end{definition}
If $\mathcal{C}$ has length $n$, dimension $k$ and minimum distance $d$, we will call it an $[n,k,d]_{q^m/q}$ code. If the minimum distance is not known or not relevant, we will say that $\mathcal{C}$ is an $[n,k]_{q^m/q}$ code.

As just defined, an $[n,k]_{q^m/q}$ code is an $\Fm$-subspace of dimension $k$. Hence, we can choose a basis of $\mathcal{C}$ and put its elements as rows of a $k\times n$ matrix. Such a matrix is called a generator matrix for $\mathcal{C}$. Equivalently, if $G$ is a generator matrix for $\mathcal{C}$, then $$\mathcal{C}=\{uG:u\in\Fm^k\}.$$
From now on, if $\mathcal{C}$ is an $[n,k]_{q^m/q}$ code, assume that $m,n>1$ and $0<k<n$.
Throughout the paper, we consider rank-metric codes only up to $\Fm$-linear rank-metric isometries of $\Fm^n$, see e.g. \cite{B2003,M2014,W1996}.
\begin{definition}
    Two $[n,k,d]_{q^m/q}$ rank-metric codes $\mathcal{C}$ and $\mathcal{C}'$ are \textbf{linearly equivalent} if there exists $A\in\mathrm{GL}(n,q)$ such that $\mathcal{C}'=\mathcal{C}A=\{vA:v\in\mathcal{C}\}$.
\end{definition}

Before proceeding, we introduce another parameter usually associated with a rank-metric code, called the effective length, which will be useful throughout the paper.
\begin{definition}
 Let $\mathcal{C}$ be an $[n,k]_{q^m/q}$ code, and let 
    $G=(G^1,\ldots,G^n)$ be a generator matrix for $\mathcal{C}$, where $G^i$ denotes the $i$-th column of $G$. 
    The \textbf{effective length} of $\mathcal{C}$ is defined as
   $$ n'=\dim_{\mathbb F_q}(\langle G^1,\ldots,G^n\rangle_{\mathbb F_q}).$$
    
     If $n'=n$, i.e. if the columns of $G$ are $\Fq$-independent, then $\mathcal{C}$ is \textbf{nondegenerate}.
\end{definition}

Before moving to the geometric counterpart of rank-metric codes, we introduce two standard operations that can be performed on a code: duality and puncturing.

\begin{definition}
Let $\mathcal{C}$ be an $[n,k]_{q^m/q}$ code. Its \textbf{dual code}, denoted by $\mathcal{C}^{\perp}$, is defined as 
\begin{equation*}
    \mathcal{C}^{\perp} = \{ u \in \Fm^n \mid  u\cdot c = 0 \;\text{ for every } c \in \mathcal{C} \},
\end{equation*}
where $\cdot$ denotes the standard inner product on $\Fm^n$.
\end{definition}

\begin{definition}
    Let $\mathcal{I}\subseteq[n]$ and consider the projection map
    $$\pi_{\mathcal{I}}:c\in\Fm^n\rightarrow c^{[n]\setminus\mathcal{I}}\in\Fm^{n-|\mathcal{I}|},$$
    where $c^{[n]\setminus\mathcal{I}}$ denotes the vector in $\Fm^{n-|\mathcal{I}|}$ obtained by deleting all the coordinates indexed by $\mathcal{I}$. Let $\mathcal{C}$ be an $[n,k]_{q^m/q}$ code, \textbf{the puncturing of $\mathcal{C}$} on $\mathcal{I}$ is 
    $$\Pi(\mathcal{C},\mathcal{I})=\{\pi_{\mathcal{I}}(c):c\in\mathcal
    C\}.$$
\end{definition}

The rest of this section will be devoted to the geometric interpretation of rank-metric codes in terms of $q$-systems. 

From now on, we use vector-space notation and in particular we identify $1$-dimensional $\Fm$-subspaces with the corresponding projective points. With a slight abuse of notation, if $H$ is an $\Fm$-subspace, we write $P\in H$ to mean that $P\subseteq H$.

As in the Hamming metric setting, where linear codes can be studied through their associated projective systems, also rank-metric codes admit a geometric counterpart, which is defined as follows.

\begin{definition}
    An $[n,k,d]_{q^m/q}$ system $U$ is an $\Fq$-subspace of $\Fm^k$ of $\Fq$-dimension $n$, not contained in any hyperplane (i.e. $\langle U\rangle_{\Fm}=\Fm^k$), with 
    \begin{equation*}
        d:=n-\max\{\dim_{\Fq}(U\cap H):H\subseteq\Fm^k, \dim_{\Fm}(H)=k-1\}.
    \end{equation*}
    Moreover, for an $\mathbb F_q$-subspace $X\subseteq \mathbb F_{q^m}^k$, its \textbf{weight} in $U$ is the quantity
    $$\wt_U(X):=\dim_{\mathbb F_q}(U\cap X).$$
\end{definition}

\begin{definition}
    Two $[n,k,d]_{q^m/q}$ systems $U$ and $U'$ are \textbf{linearly equivalent} if there exists an $\Fm$-isomorphism $\varphi\in\mathrm{GL}(k,q^m)$ such that $\varphi(U)=U'$.
\end{definition}

The geometric interpretation of rank-metric codes in terms of $q$-systems relies on the possibility of reading the rank of a codeword in terms of the weight of a hyperplane.

\begin{lemma}[{\cite{R2020}, \cite[Lemma 3.7]{ABNR2022}}]
    Let $\mathcal C$ be a nondegenerate $[n,k]_{q^m/q}$ code and let $G$ be a generator matrix. Then for every $v\in\Fm^k\setminus\{0\}$, 
    \begin{equation} \label{eq:qsyst}
    \rk(vG)=n-\wt_U(\langle v\rangle_{\Fm}^{\perp}),    
    \end{equation}
    where $U$ denotes the $[n,k]_{q^m/q}$-system given by the $\Fq$-span of the columns of $G$. 
\end{lemma}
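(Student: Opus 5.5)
The plan is to write the codeword $vG$ coordinatewise in terms of a basis of $\Fm$ over $\Fq$, and then to compute its rank as the codimension of a kernel. Let $G^1,\ldots,G^n\in\Fm^k$ be the columns of $G$, and let $U=\langle G^1,\ldots,G^n\rangle_{\Fq}$. Since $\mathcal C$ is nondegenerate, the columns are $\Fq$-independent, so $\dim_{\Fq}U=n$ and the map
$$\psi:\Fq^n\to U,\qquad \lambda\mapsto \sum_i\lambda_iG^i$$
is an $\Fq$-isomorphism. For fixed nonzero $v\in\Fm^k$, the $i$-th entry of $vG$ is $v\cdot G^i$. I will then consider the $\Fq$-linear map
$$\theta_v:\Fq^n\to\Fm,\qquad \lambda\mapsto \sum_i\lambda_i (v\cdot G^i)=v\cdot\psi(\lambda).$$

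Step one is to identify $\rk(vG)$ with $\dim_{\Fq}\mathrm{Im}\,\theta_v$. This is immediate, because the image of $\theta_v$ is exactly $\langle v\cdot G^1,\ldots,v\cdot G^n\rangle_{\Fq}$. By rank–nullity, it follows that
$$\rk(vG)=n-\dim_{\Fq}\ker\theta_v.$$

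Step two is to identify the kernel. By definition, $\lambda\in\ker\theta_v$ if and only if $\psi(\lambda)$ lies in $\{u\in\Fm^k: v\cdot u=0\}=\langle v\rangle_{\Fm}^{\perp}$. Here the orthogonal is taken with respect to the standard bilinear form on $\Fm^k$; this is an $\Fm$-hyperplane because $v\neq0$. Since $\psi$ is an isomorphism onto $U$, it restricts to an isomorphism from $\ker\theta_v$ onto $U\cap\langle v\rangle_{\Fm}^{\perp}$. Hence $\dim_{\Fq}\ker\theta_v=\wt_U(\langle v\rangle_{\Fm}^{\perp})$, and substituting this into the formula of step one gives \eqref{eq:qsyst}.

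The only point requiring care is the use of nondegeneracy in step two. It guarantees that $\psi$ is injective, so that the dimension of $U$ is $n$ and kernel dimensions transfer faithfully. Without it, one would instead get $\rk(vG)=n'-\wt_U(\langle v\rangle_{\Fm}^{\perp})$, where $n'$ is the effective length. Beyond this, the argument is a routine linear-algebra computation, and I do not expect a genuine obstacle.
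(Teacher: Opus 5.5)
Your proof is correct and is essentially the standard argument behind this lemma; the paper gives no proof of its own and only cites Randrianarisoa and Alfarano--Borello--Neri--Ravagnani. That argument, like yours, writes $\rk(vG)$ as $n$ minus the $\Fq$-dimension of the kernel of $\lambda\mapsto vG\lambda^{\top}$ on $\Fq^n$, and transports that kernel onto $U\cap\langle v\rangle_{\Fm}^{\perp}$ via the isomorphism $\Fq^n\to U$ given by nondegeneracy (your side remark that the degenerate case gives $n'-\wt_U(\langle v\rangle_{\Fm}^{\perp})$ is also right).
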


In particular, given an $[n,k]_{q^m/q}$ nondegenerate code $\mathcal{C}$ and a generator matrix $G$ for $\mathcal{C}$, the $\Fq$-span of the columns of $G$, namely $G^1,\ldots,G^n$, forms an $[n,k]_{q^m/q}$ system $U$. Conversely, taken an $\Fq$-basis of an $[n,k]_{q^m/q}$ system $U$, the matrix $G$ containing the elements of the basis as columns, defines an $[n,k]_{q^m/q}$ nondegenerate code. Moreover, from \eqref{eq:qsyst}, if $d=\drk(\mathcal{C})$
\begin{align*}
    d=\min\{\rk(vG):v\in\Fm^k\setminus\{0\}\}
    &=\min\{n-\wt_U(\langle v\rangle_{\Fm}^{\perp}) : v\in\Fm^k\setminus\{0\} \}, \\
    &= n-\max\{\wt_U(\langle v\rangle_{\Fm}^{\perp}) : v\in\Fm^k\setminus\{0\}\},
\end{align*}
and since every hyperplane $H$ of $\Fm^k$ is of the form $\langle v\rangle_{\Fm}^\perp$ for some $v\in\Fm^k\setminus\{0\}$, the minimum rank distance of $\mathcal{C}$ coincides with the parameter $d$ associated with the $q$-system.

This correspondence is true up to equivalence and it is formally proved in \cite{R2020} and \cite{ABNR2022}.

\begin{theorem}[{\cite[Theorem 2]{R2020}}]\label{thm:correspondence}
    There exists a one-to-one correspondence between classes of $[n,k,d]_{q^m/q}$ nondegenerate rank-metric codes and classes of $[n,k,d]_{q^m/q}$ systems.
\end{theorem}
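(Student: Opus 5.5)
We build the correspondence explicitly in both directions and check that it respects equivalence and that the two composite maps are identities on classes. Given a nondegenerate $[n,k,d]_{q^m/q}$ code $\mathcal C$ with generator matrix $G$, we send its class to the class of the $\Fq$-span $U_G$ of the columns of $G$. Conversely, given an $[n,k,d]_{q^m/q}$ system $U$, we choose an $\Fq$-basis $u_1,\ldots,u_n$ of $U$ and send the class of $U$ to the class of the code generated by $G_U=(u_1|\cdots|u_n)$.

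\textbf{Well-definedness.} First we check that $U_G$ is a system with the right parameters. Nondegeneracy gives $\dim_{\Fq}U_G=n$. Since $G$ has $\Fm$-rank $k$, $\langle U_G\rangle_{\Fm}=\Fm^k$. By \eqref{eq:qsyst} and the computation following it, the parameter $d$ of $U_G$ equals $\drk(\mathcal C)$. Next we check independence of choices.
\begin{itemize}
\item Another generator matrix of $\mathcal C$ is $MG$ with $M\in\mathrm{GL}(k,q^m)$, and $U_{MG}=M U_G$, which is linearly equivalent to $U_G$.
\item An equivalent code $\mathcal C A$ with $A\in\mathrm{GL}(n,q)$ has generator matrix $GA$. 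The columns of $GA$ are $\Fq$-combinations of those of $G$, and $A$ is invertible, so $U_{GA}=U_G$.
\end{itemize}
In the other direction, the same two observations apply.
\begin{itemize}
\item Changing the $\Fq$-basis of $U$ replaces $G_U$ by $G_UA$ with $A\in\mathrm{GL}(n,q)$, giving an equivalent code.
\item Replacing $U$ by $\varphi(U)$ with $\varphi\in\mathrm{GL}(k,q^m)$ replaces $G_U$ by $\varphi G_U$, which generates the same code.
\end{itemize}
The code generated by $G_U$ has $\Fm$-dimension $k$ because $U$ spans $\Fm^k$. It is nondegenerate because its columns form an $\Fq$-basis of $U$. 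Its minimum distance is $d$, again by \eqref{eq:qsyst}.

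\textbf{Mutual inverses.} Starting from $U$, the columns of $G_U$ span exactly $U$, so the composite on systems is the identity. Starting from $\mathcal C$ with matrix $G$, the columns of $G$ already form an $\Fq$-basis of $U_G$. Hence the code built from $U_G$ may be taken to be generated by $G$ itself, and by well-definedness any other basis choice gives an equivalent code. So both composites are the identity on classes, and the correspondence is bijective.

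The only point requiring some care is the invariance under change of generator matrix and change of basis. Both reduce to the identities $U_{MG}=MU_G$ and $U_{GA}=U_G$, which are immediate from matrix multiplication. All metric content is carried by Lemma \eqref{eq:qsyst}, which we take as given.
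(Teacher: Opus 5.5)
Your proposal is correct and follows the same route as the paper's sketch preceding the theorem: the $\Fq$-span of the columns of a generator matrix in one direction, a generator matrix built from an $\Fq$-basis of $U$ in the other, with \eqref{eq:qsyst} transferring the parameter $d$. You also spell out the invariance under $M\in\mathrm{GL}(k,q^m)$ and $A\in\mathrm{GL}(n,q)$, and the mutual-inverse check on classes, which the paper leaves to \cite{R2020,ABNR2022}.
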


We conclude this preliminary section by recalling the notion of $t$-clubs, well-studied objects in finite geometry that will play a role in the next sections.

\subsection{$t$-clubs on the projective line}

The notion of clubs on the projective line was first introduced by Fancsali and Sziklai in \cite{FS2006}. However, the more general notion of $t$-club, in which the exceptional weight can be different from its rank minus 1, was introduced by De Boeck and Van de Voorde in \cite{DBVdV2016} in connection with translation KM-arcs. Then, more works provided constructions, classification results and nonexistence results for particular parameters \cite{DBVdV2016,DBVdV2022,NPSZ2024}, while a recent development includes clubs in higher-dimensional projective spaces \cite{MSZ2025}. Recently it has been also exploited the link between $t$-clubs and the free product of uniform $q$-matroids in \cite{ABF2025}. 

\begin{definition}
Let $U$ be an $\Fq$-subspace of $\Fm^2$. We say that $U$ is
\textbf{scattered of rank $n$} if it has $\Fq$-dimension $n$ and $\wt_U(P)\leq 1$ for every point $P\in\Fm^2$.
\end{definition}

\begin{definition}
    Let $t$ be a nonnegative integer and let $U$ be an $\Fq$-subspace of $\Fm^2$. We say that \textbf{$U$ is a $t$-club of rank $n$} if it has $\Fq$-dimension $n$ and there exists a point $P$ such that $\wt_U(P)=t$, while for every other point $Q\neq P$, $\wt_U(Q)\le1$. 
\end{definition}

\begin{remark}
In particular, for $t\in\{0,1\}$, the notion of $t$-club is closely related to that of scattered subspace. Indeed, for $t=1$, a $t$-club is simply a scattered subspace with $\wt_U(P)=1$, while for $t=0$, $U$ is again scattered, but the point $P$ is disjoint from $U$. We include these two cases in the definition of a club, as they will be useful later to simplify the notation.
\end{remark}

In this subsection we introduce a few results on $t$-clubs that will be needed later in the paper in order to recover lower bounds on the covering radius of rank-metric codes. The first result deals with the classification of $(m-2)$-clubs on the projective line provided in \cite{NPSZ2024}.

\begin{theorem}[{\cite[Theorems 3.2 and 3.6]{NPSZ2024}}]
\label{thm:clubs_cod2}
Let $m\geq3$, let $\beta\in\Fm\setminus\Fq$, and let $T\subseteq\Fm$
be an $(m-2)$-dimensional $\Fq$-subspace containing $1$.
Choose an $\Fq$-basis $t_1,\ldots,t_{m-2}$ of $T$. For $\gamma\in\Fm$,
let
$$U=\langle(1,1),(\beta,\gamma),(0,t_1),\ldots,(0,t_{m-2})\rangle_{\Fq}.$$
Then $\dim_{\Fq}(U)=m$ and
$\mathrm{wt}_U(\langle(0,1)\rangle_{\Fm})=m-2$.
Moreover, $U$ is an $(m-2)$-club if and only if
$\gamma\notin T+\beta T$, where $$T+\beta T:=\{v+\beta w:v,w\in T\}.$$
In particular, every $(m-2)$-club of rank $m$ is equivalent to one of this form.
\end{theorem}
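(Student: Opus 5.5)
Since this theorem is quoted from \cite{NPSZ2024}, I would reprove it directly, in four steps: the dimension and weight claims, the weight of the remaining points, the club criterion, and the classification.

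\textbf{Dimension and weight of $P=\langle(0,1)\rangle_{\Fm}$.} The vectors $(0,t_i)$ span $\{0\}\times T$, which has dimension $m-2$. A relation $a(1,1)+b(\beta,\gamma)+(0,t)=0$ with $a,b\in\Fq$ and $t\in T$ forces $a+b\beta=0$ in the first coordinate. Since $1,\beta$ are $\Fq$-independent, $a=b=0$, and then $t=0$. Hence $\dim_{\Fq}(U)=m$. The same first-coordinate argument gives $U\cap P=\{0\}\times T$, so $\wt_U(P)=m-2$.

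\textbf{Weights of the other points.} Every other point has the form $Q_\lambda=\langle(1,\lambda)\rangle_{\Fm}$. Every vector of $U$ can be written as $(a+b\beta,\ a+b\gamma+t)$. It lies in $Q_\lambda$ exactly when
$$a+b\gamma+t=\lambda(a+b\beta).$$
The map $(a,b,t)\mapsto(a+b\beta,\,a+b\gamma+t)$ is injective, so $\wt_U(Q_\lambda)$ equals the dimension of the solution space $S_\lambda\subseteq\Fq^2\times T$ of this equation. Projecting to $(a,b)$ is injective on $S_\lambda$, because $a=b=0$ forces $t=0$. Therefore $\wt_U(Q_\lambda)\le 2$, and weight $2$ means that for all $(a,b)\in\Fq^2$ we have $a(\lambda-1)+b(\lambda\beta-\gamma)\in T$. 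This is equivalent to
$$\lambda-1\in T\quad\text{and}\quad \lambda\beta-\gamma\in T.$$

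\textbf{Club criterion.} If both conditions hold, write $\lambda=1+s$ with $s\in T$. Since $1\in T$, this gives $\gamma=\beta+s\beta-t'=(1+s)\beta-t'\in T+\beta T$. Conversely, if $\gamma=v+\beta w$ with $v,w\in T$, set $\lambda=w$. Then $\lambda-1=w-1\in T$, again using $1\in T$, and $\lambda\beta-\gamma=-v\in T$. So some point $Q\neq P$ has weight $2$ exactly when $\gamma\in T+\beta T$. This proves the equivalence: $U$ is an $(m-2)$-club if and only if $\gamma\notin T+\beta T$.

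\textbf{Classification.} Let $U'$ be any $(m-2)$-club of rank $m$ in $\Fm^2$.
\begin{enumerate}
\item Apply an element of $\mathrm{GL}(2,q^m)$ sending the heavy point to $P$. Then $U'\cap P=\{0\}\times T'$ with $\dim T'=m-2$.
\item Project $U'$ onto the first coordinate. The kernel is $U'\cap P$, so the image is a $2$-dimensional $\Fq$-space $\langle x,y\rangle_{\Fq}$.
\item Scale the first coordinate so that $x=1$, and put $\beta:=y/x\notin\Fq$. Then $U'=\langle(1,c),(\beta,d)\rangle_{\Fq}+(\{0\}\times T')$ for some $c,d\in\Fm$.
\item Apply the map $(u,v)\mapsto(u,v-cu)$, which fixes $P$. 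This makes $c=0$.
\item Scale the second coordinate so that $T'$ contains $1$, using the same scalar on the vector $(1,0)$.
\end{enumerate}
The main obstacle is the last normalization: matching the generator $(1,1)$ rather than $(1,0)$. To handle it, I would use the map $(u,v)\mapsto(u,v+u)$, which is an element of $\mathrm{GL}(2,q^m)$ fixing $P$. It sends $(1,0)$ to $(1,1)$ and $(\beta,d)$ to $(\beta,d+\beta)$. It therefore produces the stated form with $\gamma=d+\beta$, which completes the classification.
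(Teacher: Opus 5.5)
Your proof is correct. The paper gives no argument of its own for this statement; it imports it from \cite{NPSZ2024}. Your direct coordinate computation is therefore a valid self-contained replacement rather than an alternative to an internal proof.

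Your reduction is sound: $\wt_U(\langle(1,\lambda)\rangle_{\Fm})\le 2$, with equality if and only if $\lambda-1\in T$ and $\lambda\beta-\gamma\in T$. This criterion is exactly what the paper later uses as a black box in the case $d=2$ of Theorem~\ref{thm:boundary}, where it only needs to exhibit $T\ni 1$ and $\gamma\notin T+\beta T$.

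Two small points are worth tightening.
\begin{enumerate}
\item In the ``only if'' direction you show that some point $Q\neq P$ has weight $2$. Add one line explaining why this rules out every choice of heavy point, not just $P$. For $m\geq 4$, both $P$ and $Q$ have weight at least $2$, so no point can be the unique heavy one. For $m=3$, the point $Q$ has weight $2>1=m-2$.
\item The final normalization is not really an obstacle. After scaling the second coordinate so that $1\in T'$, let $(1,c)$ be the current lift of $1$. A single shear $(u,v)\mapsto(u,v+(1-c)u)$ sends $(1,c)$ to $(1,1)$ and fixes $\{0\}\times T'$ pointwise. All the maps you use lie in $\mathrm{GL}(2,q^m)$, so you obtain linear equivalence. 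This matches the paper's notion of equivalence of systems.
\end{enumerate}
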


We next recall some results that will be useful for the small parameter analysis in Section \ref{subsec:special_cases}. First, we recall a construction of clubs obtained from a scattered graph over a subfield, that will be helpful to construct $[6,1,3]_{q^6/q}$ codes with covering radius $5$.

\begin{proposition}[{\cite[Theorem 3.6]{DBVdV2016}}] \label{prop:3club}
Let $m=hr$, with $h,r>1$, and let $f:\F_{q^h}\rightarrow\F_{q^h}$ be an $\Fq$-linear map whose graph $\Gamma_f=\{(x,f(x)):x\in\F_{q^h}\}$ is scattered. For fixed $a,b\in\mathbb{F}_{q^h}$, with
$b\neq 0$, define
$$U_{a,b}=\left\{\left(f(x_0)-ax_0,\,bx_0+\sum_{i=1}^{r-1}x_i\omega^i\right):x_0,\ldots,x_{r-1}\in\mathbb{F}_{q^h}
\right\},$$ where $\{1,\omega,\ldots,\omega^{r-1}\}$
is an $\mathbb{F}_{q^h}$-basis of $\mathbb{F}_{q^m}$. Then
$U_{a,b}$ is an $t$-club of rank $m$, where
$$t=
\begin{cases}
h(r-1), & \text{if $f(x)-ax$ is invertible as an $\Fq$-linear endomorphism over $\mathbb{F}_{q^h}$,}\\
h(r-1)+1, & \text{otherwise.}
\end{cases}$$
\end{proposition}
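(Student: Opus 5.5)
The plan is to verify the three defining properties of a $t$-club directly from the explicit description of $U_{a,b}$. Write $g(x)=f(x)-ax$, an $\Fq$-linear endomorphism of $\F_{q^h}$, and let $K=\ker g$. The scatteredness of $\Gamma_f$ will be used only through one consequence: for every $\mu\in\F_{q^h}$, the solutions of $f(x)=\mu x$ form an $\Fq$-space of dimension at most $1$. Indeed, such solutions give vectors $(x,f(x))=x(1,\mu)$, which all lie on the single point $\langle(1,\mu)\rangle_{\F_{q^h}}$ of $\F_{q^h}^2$.

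\emph{Rank.} First I show that $\dim_{\Fq}U_{a,b}=m$. Consider the $\Fq$-linear map $(x_0,\ldots,x_{r-1})\mapsto(g(x_0),\,bx_0+\sum_{i\ge1}x_i\omega^i)$ from $\F_{q^h}^r$ to $\Fm^2$. It is injective: the second coordinate is expanded in the $\F_{q^h}$-basis $\{1,\omega,\ldots,\omega^{r-1}\}$, so it determines $bx_0$ and every $x_i$, and $b\neq0$ then gives $x_0$. Hence $\dim_{\Fq}U_{a,b}=hr=m$.

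\emph{The point $P=\langle(0,1)\rangle_{\Fm}$.} The vectors of $U_{a,b}$ lying in $P$ are exactly those with $g(x_0)=0$, with $x_1,\ldots,x_{r-1}$ arbitrary. By injectivity, $\wt_{U_{a,b}}(P)=h(r-1)+\dim_{\Fq}K$. Since $x\in K$ means $f(x)=ax$, the consequence of scatteredness (with $\mu=a$) gives $\dim K\le 1$. Moreover $\dim K=0$ exactly when $g$ is invertible. This yields the two values of $t$ in the statement.

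\emph{All other points.} Every point $Q\neq P$ has the form $\langle(1,\lambda)\rangle_{\Fm}$ with $\lambda\in\Fm$. Expand $\lambda=\lambda_0+\sum_{i\ge1}\lambda_i\omega^i$ with $\lambda_i\in\F_{q^h}$. A vector of $U_{a,b}$ lies in $Q$ iff
$$bx_0+\sum_{i\ge1}x_i\omega^i=\lambda\, g(x_0).$$
Since $g(x_0)\in\F_{q^h}$, comparing coefficients in the $\F_{q^h}$-basis gives
$$bx_0=\lambda_0 g(x_0)\qquad\text{and}\qquad x_i=\lambda_i g(x_0)\ \text{ for } i\ge1 .$$
So the vectors in $Q$ are determined by $x_0$, and $\wt_{U_{a,b}}(Q)$ equals the dimension of the solution space of $bx_0=\lambda_0(f(x_0)-ax_0)$. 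There are two cases.
\begin{itemize}
\item If $\lambda_0=0$, then $b\neq0$ forces $x_0=0$, so the weight is $0$.
\item If $\lambda_0\ne0$, the equation becomes $f(x_0)=\mu x_0$ with $\mu=(b+a\lambda_0)/\lambda_0\in\F_{q^h}$. By the consequence of scatteredness, the solution space has dimension at most $1$.
\end{itemize}
Thus every point other than $P$ has weight at most $1$, and $U_{a,b}$ is a $t$-club of rank $m$.

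The only delicate step is this coefficient comparison. It relies on $g(x_0)$ lying in the subfield $\F_{q^h}$, so that multiplying by $\lambda$ does not mix the basis components. Once that is checked, everything reduces to the scatteredness of $\Gamma_f$.
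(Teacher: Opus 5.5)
Your proof is correct and complete. Injectivity of the parametrization gives rank $m$. The weight of $\langle(0,1)\rangle_{\Fm}$ is $h(r-1)+\dim_{\Fq}\ker(f-a\,\mathrm{id})$. For $Q=\langle(1,\lambda)\rangle_{\Fm}$, comparing coefficients in the $\F_{q^h}$-basis $\{1,\omega,\ldots,\omega^{r-1}\}$ is legitimate because $a,b,x_0,f(x_0)-ax_0\in\F_{q^h}$. This reduces every weight computation to an equation $f(x)=\mu x$ with $\mu\in\F_{q^h}$, and scatteredness of $\Gamma_f$ bounds its solution space by $1$.

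The paper gives no proof of this statement; it simply imports it from \cite[Theorem 3.6]{DBVdV2016}. There is therefore no in-paper argument to compare with, and your direct verification works as a self-contained substitute for the citation.
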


We now recall a nonexistence result on clubs needed to compute the covering radius of $[5,1,3]_{q^5/q}$ codes.

\begin{theorem}[{\cite{DBVdV2022}}]
\label{thm:no2club}
For every prime power $q$, there is no $2$-club of rank $5$ in
$\mathbb F_{q^5}^2$.
\end{theorem}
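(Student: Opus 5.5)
We argue by contradiction, turning a hypothetical $2$-club into a space of $3\times 3$ matrices over $\Fq$ whose low-rank elements we then count. Suppose $U\subseteq\mathbb F_{q^5}^2$ is a $2$-club of rank $5$. Up to $\mathrm{GL}(2,q^5)$ we may assume that the point of weight $2$ is $P=\langle(0,1)\rangle_{\mathbb F_{q^5}}$.

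\textbf{Normal form.} Projecting onto the first coordinate has kernel $U\cap P=\{0\}\times T$, with $\dim_{\Fq}T=2$. Hence the image $S\subseteq\mathbb F_{q^5}$ has dimension $3$, and
$$U=\{(x,f(x)+t): x\in S,\ t\in T\}$$
for some $\Fq$-linear $f:S\to\mathbb F_{q^5}$.

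\textbf{Club condition as a rank condition.} A point $\langle(1,\lambda)\rangle$ has weight equal to $\dim_{\Fq}\ker\varphi_\lambda$, where
$$\varphi_\lambda:S\to\mathbb F_{q^5}/T,\qquad x\mapsto f(x)-\lambda x \bmod T .$$
So $U$ is a $2$-club exactly when every $\varphi_\lambda$ has rank at least $2$, as a map between two $3$-dimensional $\Fq$-spaces. Set $\mu_\lambda:x\mapsto \lambda x \bmod T$. Then $A=\{\mu_\lambda:\lambda\in\mathbb F_{q^5}\}$ is a $5$-dimensional space of $3\times3$ matrices; the map $\lambda\mapsto\mu_\lambda$ is injective because $\lambda S\not\subseteq T$ for dimension reasons. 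Put $B=\langle f\rangle_{\Fq}+A$, which has dimension $6$; if $f\in A$ we get a point of weight $3$ at once. The club condition then says: $B$ has no element of rank $\le1$ outside $A$.

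\textbf{Counting rank-one elements.} Next we count the rank-one elements of $A$. The map $\mu_\lambda$ has rank $1$ iff $\dim(\lambda S\cap T)=2$, i.e. $T\subseteq \lambda S$, equivalently $\lambda^{-1}T\subseteq S$. So the rank-one elements of $A$ are indexed by those $\lambda$ for which $\lambda^{-1}T\subseteq S$, i.e. by how $S$ meets the $\mathbb F_{q^5}^*$-orbit of $T$. This count is computable, or at least boundable, from the geometry of $S$ and $T$ in $\mathbb F_{q^5}$. Note that $\mathbb F_{q^5}$ has no intermediate subfields, so such incidences are rare.

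\textbf{Deriving the contradiction.} A $6$-dimensional space of $3\times 3$ matrices with minimum rank $2$ would be MRD, by the Singleton-like bound $\dim\le 3(3-2+1)=6$. That is impossible here whenever $A$ already has rank-one elements, and the case where $A$ has none must be handled separately. In general, I would apply the rank-metric MacWilliams identities to $B$ and its $3$-dimensional dual $B^\perp$. They express the number of rank-$\le1$ elements of $B$ in terms of the number of elements of $B^\perp$ whose row space lies in a fixed line, or column space in a fixed line. Combining the count for $A$ with these identities, I aim to show that $B$ has strictly more rank-one elements than $A$. Such an extra element is some $\varphi_\lambda$ (up to scalar) of rank $\le1$, i.e. a point of weight $\ge2$ other than $P$, which is the desired contradiction.

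\textbf{Main obstacle.} The hard step is the last one: controlling the low-rank structure of $B^\perp$ precisely enough for the MacWilliams count to force an extra rank-one element, uniformly in $q$. I would first try to prove that $B^\perp$ contains a nonzero element of rank $\le1$, using that $B^\perp$ is annihilated by all the multiplication maps $\mu_\lambda$, together with the trace form on $\mathbb F_{q^5}$. If this fails, I would fall back on the polynomial/linear-set approach. There one writes $f$ as a $q$-polynomial and shows that the size of the linear set $L_U$ forced by the club condition is incompatible with known bounds for linear sets of rank $5$ on $\mathrm{PG}(1,q^5)$.
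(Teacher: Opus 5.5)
The paper contains no proof of this statement: it is imported from \cite{DBVdV2022} and used as a black box in Proposition~\ref{prop:special}. Your normal form and rank reformulation are correct, but they only re-encode the problem. Your $A$ is exactly the auxiliary code $\mathcal A_{S,T}$ of Theorem~\ref{thm:assmat}, and ``every $\varphi_\lambda$ has rank $\ge2$'' is condition \eqref{cond:d} of Proposition~\ref{prop:caractd} for $(m,n,d)=(5,5,3)$, i.e.\ $\drk(\pi_T\circ f,\mathcal A_{S,T})\ge 2$. The paper runs this equivalence in the opposite direction, deducing $\rho(\mathcal C)=3$ for $[5,1,3]_{q^5/q}$ codes from the cited result. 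So the entire content of the theorem lies in your last step, which you leave open. As it stands, this is not a proof.

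Moreover, the tools you name for that step cannot close it as described.

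\emph{The MRD remark gives nothing.} The club condition only forbids rank $\le1$ in $B\setminus A$, so $B$ need not be MRD when $A$ has rank-one maps. When $A$ has none, the condition says that $B$ \emph{is} a $6$-dimensional MRD code with $d=2$ in $\Fq^{3\times3}$. Such codes exist (Gabidulin codes), so no argument using only rank distributions and MacWilliams identities can exclude this case; the multiplicative structure of $A$ must enter.

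\emph{Your first planned attack cannot work.} You hope to find a nonzero element of rank $\le1$ in $B^\perp$, but $B^\perp\subseteq A^\perp$ and $A^\perp$ has none. A rank-one element of $A^\perp$ would be a functional $\phi\mapsto\psi(\phi(s))$, with $s\in S\setminus\{0\}$ and $\psi\ne0$ a functional on $\mathbb F_{q^5}/T$, vanishing on every $\mu_\lambda$. That means $\psi(\lambda s+T)=0$ for all $\lambda$, and since $\lambda s$ runs over all of $\mathbb F_{q^5}$, this forces $\psi=0$.

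\emph{MacWilliams only yields another equivalent statement.} Since $|B|=q^6$ and $B^\perp$ has no rank-one elements, summing $|B_L|=|B^\perp_{L^\perp}|$ over the lines $L\subseteq\Fq^3$ shows that the number of rank-one elements of $B$ equals the number of rank-two elements of $B^\perp$. Let $T^\perp$ be the trace dual of $T$, and let $N$ be the number of $\lambda$, up to $\Fq^*$, with $\lambda T\subseteq S$. Then a $2$-club exists if and only if some $3$-dimensional subspace of the $4$-dimensional space $A^\perp\cong\ker\bigl(S\otimes T^{\perp}\to\mathbb F_{q^5},\ s\otimes\tau\mapsto s\tau\bigr)$ contains exactly $(q-1)N$ rank-two elements. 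The strict inequality you need is still unproved.

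\emph{The linear-set fallback is also empty.} A $2$-club of rank $5$ is a linear set of size $q^4+q^3+q^2+1$. No general size bound for rank-$5$ linear sets in $\mathrm{PG}(1,q^5)$ excludes this; excluding it is precisely what \cite{DBVdV2022} does.

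What is missing is a genuinely $\mathbb F_{q^5}$-specific argument that controls these rank-two elements.
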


As shown in Section \ref{sec:k1}, clubs are related to the study of the covering radius of $\Fm$-linear rank-metric codes, but not only, as highlighted by the following remark. 

\begin{remark}
Alfarano, Byrne and Fulcher identify clubs on the projective line with
representations of free products of two rank-one uniform $q$-matroids
\cite[Theorem~87]{ABF2025}. Their theorem ensures that if $U$ is a $t$-club of rank $n$, with $2\leq t<n$, then the $q$-matroid $M_U=(U,r_U)$, defined by the rank function $r_U(A)=\dim_{\Fm}(\langle A\rangle_{\Fm})$ where $A$ is an $\Fq$-subspace of $U$, satisfies
$$M_U\cong\mathcal U_{1,t}(q)\mathbin{\square}\mathcal U_{1,n-t}(q).$$
Here, $\mathcal U_{1,j}(q)$ denotes the uniform $q$-matroid of rank one
on a $j$-dimensional $\Fq$-space, and $\square$ denotes the free product.
Conversely, an $\Fm$-representation of this free product yields a
$t$-club of rank $n$. This means that our Corollary \ref{cor:corrclub}, for $\delta\ge 2$, can be also read in terms of free product of uniform $q$-matroids.
\end{remark}

\section{A geometric link with the covering radius} \label{sec:mainthm}

Starting from any distance, we can define the notion of covering radius. In our context, we focus on the covering radius for rank-metric codes.
\begin{notation}
    Let $x\in\Fm^n$, we will denote the distance between $x$ and $\mathcal C$ as
    $$\drk(x,\mathcal{C})=\min_{c\in\mathcal
    C}\drk(x,c).$$
\end{notation}

\begin{definition}
    The \textbf{covering radius} of an $[n,k,d]_{q^m/q}$ code $\mathcal{C}$ is
    \begin{equation*}
        \rho(\mathcal{C}):=\max_{x\in\Fm^n}\mathrm{d}_{\rk}(x,\mathcal{C}).
    \end{equation*}
\end{definition}

The notion of covering radius can be defined in any metric space. In particular,
in the Hamming metric setting, it has a geometric interpretation in terms
of saturating sets. An analogous geometric counterpart for rank-metric codes was introduced in \cite{BBB2023} using saturating systems, the $q$-analogues of saturating sets.

\begin{definition}
   An $[n,k]_{q^m/q}$ system $U$ is \textbf{rank-$\rho$-saturating} if, for every $v\in\Fm^k$ there exists a subspace $W\subset U$ of $\Fq$-dimension at most $\rho$ such that $v\in\langle W\rangle_{\Fm}$.
\end{definition}

In the Hamming metric, there exists a one-to-one correspondence between classes of codes with covering radius at most $\rho$ and classes of $\rho$-saturating sets. Similarly, there exists a correspondence between rank-metric codes and saturating systems.

\begin{theorem}[{\cite[Theorem 3.5]{BBB2023}}]\label{thmrankrho}
There exists a one-to-one correspondence between classes of $[n,n-k]_{q^m/q}$ codes whose dual is nondegenerate, with covering radius at most $\rho$, and classes of $[n,k]_{q^m/q}$ rank-$\rho$-saturating systems.
\end{theorem}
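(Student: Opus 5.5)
We would prove this by passing to the parity-check side and using the syndrome map. Let $\mathcal C$ be an $[n,n-k]_{q^m/q}$ code whose dual is nondegenerate, and let $H\in\Fm^{k\times n}$ be a generator matrix of $\mathcal C^\perp$, i.e.\ a parity-check matrix of $\mathcal C$. Let $U\subseteq\Fm^k$ be the $\Fq$-span of the columns $H^1,\ldots,H^n$. Since $\mathcal C^\perp$ is nondegenerate, these columns are $\Fq$-independent, so $\dim_{\Fq}U=n$. Since $H$ has $\Fm$-rank $k$, we get $\langle U\rangle_{\Fm}=\Fm^k$. Hence $U$ is an $[n,k]_{q^m/q}$ system. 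Conversely, by Theorem~\ref{thm:correspondence}, every $[n,k]_{q^m/q}$ system arises as the system of a nondegenerate $[n,k]_{q^m/q}$ code, whose dual is then an $[n,n-k]_{q^m/q}$ code of the required type. So it only remains to show that the covering radius condition matches the saturating condition, and that both constructions respect equivalence.

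The core step is a syndrome computation. The map $x\mapsto Hx^\top$ is a surjection $\Fm^n\to\Fm^k$ whose fibres are the cosets of $\mathcal C$. Therefore
$$\drk(x,\mathcal C)=\min\{\rk(e): He^\top=Hx^\top\},$$
and $\rho(\mathcal C)\le\rho$ holds if and only if every $s\in\Fm^k$ is the syndrome of some $e$ with $\rk(e)\le\rho$. We then show the following for $s\in\Fm^k$:
$$s=He^\top \text{ for some } e \text{ with } \rk(e)\le r \iff s\in\langle W\rangle_{\Fm} \text{ for some } \Fq\text{-subspace } W\subseteq U \text{ with } \dim W\le r.$$

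For ($\Rightarrow$): if $\rk(e)=r$, write $e=aA$ with $a\in\Fm^r$ and $A\in\Fq^{r\times n}$, using an $\Fq$-basis $a_1,\ldots,a_r$ of the span of the entries of $e$. Then $He^\top=\sum_i a_i\,(HA^\top)^i$. Each column $(HA^\top)^i$ is an $\Fq$-combination of columns of $H$, so it lies in $U$. These columns span some $W\subseteq U$ with $\dim W\le r$.

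For ($\Leftarrow$): if $s=\sum_{i=1}^{r}a_iw_i$ with $w_i\in U$, write $w_i=Hu_i^\top$ with $u_i\in\Fq^n$. Then $e=\sum_i a_iu_i$ has $\rk(e)\le r$ and $He^\top=s$.

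This proves that $\rho(\mathcal C)\le\rho$ if and only if $U$ is rank-$\rho$-saturating.

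Finally we check well-definedness on classes. Replacing $H$ by $MH$ with $M\in\mathrm{GL}(k,q^m)$ does not change $\mathcal C$, and it replaces $U$ by the equivalent system $M(U)$. Replacing $\mathcal C$ by $\mathcal CA$ with $A\in\mathrm{GL}(n,q)$ turns the parity-check matrix into $H(A^{-1})^\top$, and since $A$ has entries in $\Fq$, this matrix has the same column $\Fq$-span $U$. So the map from code classes to system classes is well defined, and the converse map is well defined by the same two observations. Bijectivity then follows from Theorem~\ref{thm:correspondence} applied to $\mathcal C^\perp$, together with $(\mathcal C^\perp)^\perp=\mathcal C$.

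The main point requiring care is the ($\Leftarrow$) direction: we need the lift $u_i\in\Fq^n$ to exist. This uses exactly that $H$ restricted to $\Fq^n$ is an isomorphism onto $U$, which is where nondegeneracy of the dual is essential.
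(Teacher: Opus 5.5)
The paper gives no proof of this statement; it imports it from \cite{BBB2023}. Your syndrome argument is correct and complete, and it is the standard route to this result. The coset description of $\drk(x,\mathcal C)$ is right. So is the two-way translation between preimages $e=\sum_i a_iu_i$ with $u_i\in\Fq^n$, $\rk(e)\le r$, and $\Fm$-combinations of at most $r$ vectors of $U$. The class bookkeeping also holds: $H\mapsto MH$, and $(\mathcal CA)^\perp=\mathcal C^\perp(A^{-1})^\top$ leaves the $\Fq$-column span unchanged. Together with Theorem~\ref{thm:correspondence} applied to $\mathcal C^\perp$, these give the bijection.

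One correction concerns your closing remark. For $w_i\in U$, a lift $u_i\in\Fq^n$ with $Hu_i^\top=w_i$ exists simply because $U$ is, by definition, the $\Fq$-span of the columns of $H$. So $x\mapsto Hx^\top$ maps $\Fq^n$ onto $U$ whether or not it is injective, and nondegeneracy plays no role in the ($\Leftarrow$) step. In fact, the equivalence "$\rho(\mathcal C)\le\rho$ if and only if the column span is rank-$\rho$-saturating" holds for any parity-check matrix. Nondegeneracy of $\mathcal C^\perp$ is needed elsewhere. It gives $\dim_{\Fq}U=n$, so that $U$ is an $[n,k]_{q^m/q}$ system, and it lets you invoke Theorem~\ref{thm:correspondence}, which concerns nondegenerate codes.

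A trivial slip: in ($\Rightarrow$) you should take $\rk(e)=r'\le r$ rather than $\rk(e)=r$. The argument is otherwise unchanged.
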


The study of the covering radius of a code is in general a difficult task. It is known that this problem can be approached using the correspondence with rank-$\rho$-saturating systems, as just observed. However, in some cases, it might be easier to use a different approach.  The stepping stone of our approach is based on the following elementary lemma.

\begin{lemma} \label{Lemmacr}
    Let $\mathcal{C}$ be an $[n,k]_{q^m/q}$ code and let $G$ be a generator matrix for $\mathcal{C}$. Let $t$ be a positive integer such that $t\leq\min\{m,n\}$. Then the following conditions are equivalent.
    \begin{enumerate}[label=(\arabic*)]
        \item \label{cond:rho} $\rho(\mathcal{C})\geq t$.
        \item \label{cond:equivrho}There exists $\alpha\in\Fm^n$ such that $\min_{c\in\mathcal{C}}\rk(\alpha-c)\geq t$.
        \item \label{cond:extended} There exists  $\alpha\in\Fm^n$ such that, if we take the code $\mathcal{C}_{\alpha}=\mathrm{rowsp}(G_{\alpha})$ with $G_{\alpha}=\begin{bmatrix}
            G \\
            \alpha
        \end{bmatrix}$, then
        \begin{equation*}
            \min\{\rk(u'G_{\alpha}):u'=(u_1,\ldots,u_{k+1})\in\Fm^{k+1}, u_{k+1}\neq0\}\geq t.
        \end{equation*}
    \end{enumerate}
\end{lemma}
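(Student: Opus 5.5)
The plan is to prove the two equivalences (1)$\Leftrightarrow$(2) and (2)$\Leftrightarrow$(3) directly from the definitions. Neither step needs the geometric machinery introduced so far. The hypothesis $t\le\min\{m,n\}$ plays no real role in the logic; it only ensures the statement is not vacuous.

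For (1)$\Leftrightarrow$(2), unfold the definition
$$\rho(\mathcal C)=\max_{x\in\Fm^n}\drk(x,\mathcal C)=\max_{x\in\Fm^n}\min_{c\in\mathcal C}\rk(x-c).$$
Since $\Fm^n$ is finite, the maximum is attained at some $\alpha$. Then $\rho(\mathcal C)\ge t$ holds exactly when some $\alpha$ satisfies $\min_{c}\rk(\alpha-c)\ge t$, which is condition (2).

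For (2)$\Leftrightarrow$(3), describe the codewords of $\mathcal C_\alpha$ with nonzero last coefficient. Write $u'=(u,u_{k+1})$ with $u\in\Fm^k$, so that $u'G_\alpha=uG+u_{k+1}\alpha$. When $u_{k+1}\neq0$, we have
$$\rk(u'G_\alpha)=\rk\bigl(u_{k+1}(\alpha+u_{k+1}^{-1}uG)\bigr)=\rk(\alpha-c),\qquad c=-u_{k+1}^{-1}uG\in\mathcal C.$$
This uses the fact that multiplying by a nonzero scalar of $\Fm$ is an $\Fq$-linear bijection of $\Fm$. It therefore maps $\langle v_1,\dots,v_n\rangle_{\Fq}$ isomorphically onto $\langle \lambda v_1,\dots,\lambda v_n\rangle_{\Fq}$, so the rank weight is invariant under $\Fm^*$-scaling.

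Conversely, every $c=uG\in\mathcal C$ arises in this way from $u'=(-u,1)$. So the set $\{\rk(u'G_\alpha):u_{k+1}\neq0\}$ coincides with $\{\rk(\alpha-c):c\in\mathcal C\}$, and the two minima are equal. This gives (2)$\Leftrightarrow$(3) for the same $\alpha$.

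No step is a genuine obstacle. The only point needing care is the scaling invariance of rank weight, which I would state explicitly, together with the observation that $G_\alpha$ need not have full rank. If $\alpha\in\mathcal C$, both minima in (2) and (3) equal $0<t$, so both conditions fail together and the equivalence remains consistent.
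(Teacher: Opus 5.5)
Your proof is correct and follows the paper's argument: (1)$\Leftrightarrow$(2) by unfolding the definition, and (2)$\Leftrightarrow$(3) via $u'G_\alpha=uG+u_{k+1}\alpha$, using the linearity of $\mathcal C$ and the invariance of rank weight under $\Fm^*$-scaling. The only cosmetic difference is that you show the two sets of ranks coincide (using $u'=(-u,1)$), while the paper proves the two implications separately (taking $u_{k+1}=-1$ for the converse).
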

\begin{proof}
    \underline{\ref{cond:rho} $\Longleftrightarrow$ \ref{cond:equivrho}}: It follows directly from the definition of covering radius.
    
    \underline{\ref{cond:equivrho} $\implies$ \ref{cond:extended}}: Let $\alpha\in\Fm^n$ such that $\min_{c\in\mathcal C}\rk(\alpha-c)\ge t$. Since $t\geq 1$, we necessarily have $\alpha\notin\mathcal C$. Otherwise, by taking $c=\alpha$, we would obtain $\rk(\alpha-c)=0$. Hence, $\alpha$ does not belong to the $\Fm$-row space of $G$, and $\rank(G_\alpha)=k+1$. Let $u'=(u_1,\ldots,u_k,u_{k+1})\in\Fm^{k+1}$ with $u_{k+1}\neq 0$, and set $u=(u_1,\ldots,u_k)\in\Fm^k$. Then $$u'G_\alpha=uG+u_{k+1}\alpha.$$
    Since $\mathcal{C}$ is $\Fm$-linear, then $-u_{k+1}^{-1}(uG)\in\mathcal{C}$. Moreover, multiplication by a nonzero element of $\Fm$ preserves the rank. Thus, 
$$\rk(u'G_{\alpha})=\rk\left({uG}+u_{k+1}\alpha\right)=\rk\left(\alpha-\left(-\frac{uG}{u_{k+1}}\right)\right)\ge t.$$
Since this holds for every $u'\in\Fm^{k+1}$ whose last coordinate is nonzero, then it is true for the minimum among all such vectors $u'$.

\underline{\ref{cond:extended} $\implies$ \ref{cond:equivrho}}: Conversely, consider $u_{k+1}=-1$, then
$$\rk(u'G_{\alpha})=\rk(uG-\alpha)=\rk(\alpha-uG)\ge t\quad\text{ for every } u=(u_1,\ldots,u_k)\in\Fm^k.$$
    Hence, we can conclude that there exists $\alpha\in\Fm^n$ such that $\min_{c\in\mathcal{C}}\rk(\alpha-c)\ge t$.
    
\end{proof}

The previous lemma allows us to introduce a different geometric approach for studying the covering radius of linear rank-metric codes. We will later provide evidence of the fact that this approach simplifies the discussion in some cases. Before doing this, let us introduce some notation.

\medskip
 Let $\mathcal{C}$ be an $[n,k]_{q^m/q}$ code with effective length $n'$, let $G$ be a generator matrix for $\mathcal{C}$ and denote by $G^i$, with $i\in[n]$ the $i$-th column of $G$. By definition of effective length, $\dim_{\Fq}(\langle G^1,\dots,G^n\rangle_{\Fq})=n'$, hence there exist $i_1,\ldots,i_{n'}\in[n]$ such that $\langle G^1,\dots,G^n\rangle_{\Fq}=\langle G^{i_1},\dots,G^{i_{n'}}\rangle_{\Fq}$. Therefore, there exists $A\in\operatorname{GL}(n,q)$ such that $GA=(\tilde{G}^1\cdots \tilde{G}^{n'}0\cdots0)$ with $\tilde{G}^1,\dots, \tilde{G}^{n'}$ linearly independent over $\Fq$. Thus, up to linear equivalence, every $[n,k]_{q^m/q}$ code $\mathcal{C}$ with effective length $n'$ admits a generator matrix of the form
 \begin{equation} \label{eq:genmatrix}
G=\begin{pmatrix}
        G^1\,\cdots \, G^{n'} \,0 \, \cdots\, 0
    \end{pmatrix},
 \end{equation}
    with $G^1,\ldots,G^{n'}$ linearly independent over $\Fq$. Since right multiplication by an element of $\operatorname{GL}(n,q)$ gives rank-metric isometries of $\Fm^n$, the covering radius is preserved by this operation, thus  $$\rho(\mathcal{C})=\rho(\mathcal{C}A) \quad\text{for every}\;\; A\in\operatorname{GL}(n,q).$$ In particular, if $n'<n$ then $\mathcal{C}$ is linearly equivalent to a code in $\Fm^{n'}\times\{0\}^{n-n'}$. Hence, it can be isometrically embedded in $\Fm^{n'}$ in the following way. Consider the code
    \begin{equation*}
        \tilde{\mathcal{C}}=\{(c_1,\ldots,c_{n'}):(c_1,\ldots,c_{n'},0,\ldots,0)\in\mathcal{C}A\}\subseteq \Fm^{n'}.
    \end{equation*}
 Since $$\rk(c_1,\ldots,c_{n'})=\rk(c_1,\ldots,c_{n'},0,\ldots,0) \quad \text{for every} \;\;(c_1,\ldots,c_{n'},0,\ldots,0)\in\mathcal{C}A,$$
$\tilde{\mathcal{C}}$ has the same dimension and minimum distance as $\mathcal{C}$, in particular it is an $[n',k,d]_{q^m/q}$ code. Moreover, we can also consider the associated $[n',k,d]_{q^m/q}$ system.

\begin{remark}
    From now on, we will consider every $[n,k]_{q^m/q}$ code with effective length $n'<n$ to have generator matrix of the form \eqref{eq:genmatrix}. In particular, we will call the associated $[n',k]_{q^m/q}$ system $U=\langle G^1,\ldots,G^{n'} \rangle_{\Fq}$. 
    \medskip
    
    Moreover, let us introduce the following notation. For every positive integer $s$ we denote by
$$\mathcal{H}_s(q^m)=\{H\subseteq\Fm^s:\dim_{\Fm}(H)=s-1\}$$
    the set of all the hyperplanes of $\Fm^s$. In particular, fix $P=\langle(0,\ldots,0,1)\rangle_{\Fm}$, we need to introduce two subsets of $\mathcal{H}_{k+1}(q^m)$, that are
\begin{align*}
    \mathcal{H}_P^+=\{H\in\mathcal{H}_{k+1}(q^m):P\in H\} \qquad \text{and} \qquad \mathcal{H}_P^-=\{H\in\mathcal{H}_{k+1}(q^m):P\notin H\}.
\end{align*}
Observe that we can identify the elements of $\mathcal{H}_P^+$ and $\mathcal{H}_k(q^m)$ in the following way. There exists a natural isomorphism between $S=\Fm^k\times\{0\}\subseteq\Fm^{k+1}$ and $\Fm^k$, that is the restriction to $S$ of the natural projection $\pi:\Fm^{k+1}\rightarrow\Fm^k$. Using this isomorphism we can construct a bijection between $\mathcal{H}_P^+$ and $\mathcal{H}_k(q^m)$. Define the map
$$\Phi:H\in\mathcal{H}_P^+\longmapsto \pi_{|S}(H\cap S)\in\mathcal{H}_k(q^m),$$
by Grassmann $\dim(H\cap S)=k-1$, hence $\pi_{|S}(H\cap S)\in\mathcal{H}_k(q^m)$. Conversely 
$$\Phi^{-1}:H\in \mathcal{H}_k(q^m)\longmapsto \pi_{|S}^{-1}(H)+P\in\mathcal{H}_P^+.$$

\end{remark}

\begin{definition}\label{def:covering_lift}
    Let $U$ be an $[n',k]_{q^m/q}$ system and let $0\le\delta\le m$
    be an integer. An $[n'+\delta,k+1]_{q^m/q}$ system
    $\widetilde U$ is a
    \textbf{$\delta$-covering lift} of $U$ if
    $\pi(\widetilde U)=U,$ where $\pi:\Fm^{k+1}\rightarrow\Fm^k$ denotes the natural projection on the first $k$ coordinates. We define  $\mathcal L_\delta(U)$ to be the family of all $\delta$-covering lifts of $U$. In particular, for every $\delta$-covering lift $\widetilde U$, we define
    $$\mu_P(\widetilde U)=
    \max_{H\in\mathcal{H}_P^-}\mathrm{wt}_{\widetilde U}(H).$$
\end{definition}

The weights of the hyperplanes in $\mathcal{H}_P^+$ are determined by $U$
and $\delta$, as the following lemma shows.

\begin{lemma}\label{lem:cl_weights}
    Let $\widetilde U$ be a $\delta$-covering lift of an
    $[n',k]_{q^m/q}$ system $U$. Then
    $\mathrm{wt}_{\widetilde U}(P)=\delta$
    and, for every $H\in\mathcal{H}_P^+$,
    $$\mathrm{wt}_{\widetilde U}(H)
    =\mathrm{wt}_U(\Phi(H))+\delta.$$
\end{lemma}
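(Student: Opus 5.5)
The plan is to work directly with the restriction $\pi|_{\widetilde U}:\widetilde U\to U$, which is $\Fq$-linear and surjective because $\pi(\widetilde U)=U$. Since $\dim_{\Fq}\widetilde U=n'+\delta$ and $\dim_{\Fq}U=n'$, rank–nullity gives $\dim_{\Fq}\ker(\pi|_{\widetilde U})=\delta$. The kernel of $\pi$ on all of $\Fm^{k+1}$ is exactly $P=\langle(0,\ldots,0,1)\rangle_{\Fm}$. Hence $\ker(\pi|_{\widetilde U})=\widetilde U\cap P$, and therefore $\wt_{\widetilde U}(P)=\delta$. This settles the first claim.

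For the second claim, I fix $H\in\mathcal H_P^+$ and restrict $\pi$ to $\widetilde U\cap H$. The key observation is that, since $P\subseteq H$, we have $H=\pi^{-1}(\pi(H))$. Indeed, $H\subseteq\pi^{-1}(\pi(H))$ always holds. Conversely, if $\pi(x)=\pi(h)$ with $h\in H$, then $x-h\in P\subseteq H$, so $x\in H$.

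Next I identify $\pi(H)$ with $\Phi(H)$. Because $H\supseteq P$, every $h\in H$ can be written as $h=s+p$ with $s\in H\cap S$ and $p\in P$: take $s=(h_1,\ldots,h_k,0)$, which lies in $H$ since $h-s\in P\subseteq H$. Therefore $\pi(H)=\pi|_S(H\cap S)=\Phi(H)$.

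Now I compute the image $\pi(\widetilde U\cap H)$. For $u\in U\cap\Phi(H)$, pick any $x\in\widetilde U$ with $\pi(x)=u$, which exists by surjectivity. Since $\pi(x)\in\pi(H)$, the preimage property gives $x\in H$. Conversely, $\pi(\widetilde U\cap H)$ is clearly contained in $U\cap\Phi(H)$. So $\pi$ maps $\widetilde U\cap H$ onto $U\cap\Phi(H)$.

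The kernel of this restricted map is $\widetilde U\cap H\cap P=\widetilde U\cap P$, which has dimension $\delta$ by the first part. Rank–nullity then gives
$$\wt_{\widetilde U}(H)=\dim_{\Fq}(U\cap\Phi(H))+\delta=\wt_U(\Phi(H))+\delta.$$

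The only delicate point is the surjectivity of $\widetilde U\cap H\to U\cap\Phi(H)$, that is, that an arbitrary lift of an element of $U\cap\Phi(H)$ automatically lies in $H$. This is exactly where the hypothesis $P\in H$ is used, via $H=\pi^{-1}(\pi(H))$. The rest is routine linear algebra.
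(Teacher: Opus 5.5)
Your proof is correct and follows the paper's argument: the restriction of $\pi$ to $\widetilde U$ has kernel $\widetilde U\cap P$, which gives $\wt_{\widetilde U}(P)=\delta$, and then the surjection $\widetilde U\cap H\to U\cap\Phi(H)$, justified by $H=\pi^{-1}(\Phi(H))$, together with rank--nullity gives the formula for $\wt_{\widetilde U}(H)$. You also verify $H=\pi^{-1}(\pi(H))$, $\pi(H)=\Phi(H)$ and the surjectivity explicitly, which the paper only states.
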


\begin{proof}
    First observe that the restriction of $\pi$ to $\widetilde U$ is an $\Fq$-linear surjection onto $U$ with kernel $\widetilde U\cap P$. Hence,
    \begin{equation}\label{eq:lemlift}
        \wt_{\widetilde U}(P)=\dim_{\Fq}(\widetilde U\cap P)=\dim_{\Fq}(\ker(\pi_{|\widetilde U}))=\dim_{\Fq}(\widetilde U)-\dim_{\Fq}(U)=\delta.
    \end{equation}
    Let $H\in\mathcal{H}_P^+$, since $H=\pi^{-1}(\Phi(H))$,
    the restriction
    $\pi:\widetilde U\cap H\longrightarrow U\cap\Phi(H)$
    is surjective and has kernel $\widetilde U\cap P$.
    Thus, by the rank-nullity theorem and \eqref{eq:lemlift}, we have
    $$
    \mathrm{wt}_{\widetilde U}(H)=\dim_{\Fq}(\widetilde U\cap H)
    =\dim_{\Fq}(U\cap\Phi(H))+\dim_{\Fq}(\widetilde U\cap P)=\mathrm{wt}_U(\Phi(H))+\delta.
    $$
    
\end{proof}

\begin{theorem} \label{thm:hyp}
   Let $m,n$ and $n'$ be positive integers, let $\delta=n-n'$ and assume that $\delta\le m$. Let $\mathcal C$ be an $[n,k]_{q^m/q}$ code with effective length $n'$ and let $U$ be the $[n',k]_{q^m/q}$ system associated with $\mathcal{C}$. Then
   $$\rho(\mathcal{C})=n-\min_{\widetilde U\in\mathcal{L}_{\delta}(U)}\mu_P(\widetilde U).$$
\end{theorem}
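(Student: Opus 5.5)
We prove two inequalities by linking vectors $\alpha\in\Fm^n$ to covering lifts via Lemma~\ref{Lemmacr}. Assume $G$ has the form \eqref{eq:genmatrix}, so its last $\delta$ columns are zero, and write $\alpha=(\alpha_1,\ldots,\alpha_n)$. The extended matrix $G_\alpha$ has columns $(G^i,\alpha_i)$ for $i\le n'$ and $(0,\alpha_i)$ for $i>n'$. Let $\widetilde U_\alpha$ be the $\Fq$-span of these columns; clearly $\pi(\widetilde U_\alpha)=U$. For $u'$ with $u_{k+1}\ne0$, the hyperplane $\langle u'\rangle^\perp$ does not contain $P$. Conversely, every $H\in\mathcal H_P^-$ arises in this way, since $P\notin\langle u'\rangle^\perp$ exactly when $u_{k+1}\neq 0$.

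For the extended matrix we use the degenerate form of the weight formula \eqref{eq:qsyst}: $\rk(vM)=N-\dim_{\Fq}\{x\in\Fq^N: Mx\in\langle v\rangle^\perp\}$, where $N$ is the number of columns. This holds without nondegeneracy because it is just rank–nullity on $x\mapsto vMx$. It gives
$$\rk(u'G_\alpha)=n-\dim(\widetilde U_\alpha\cap H)-\dim\ker_{\Fq}(G_\alpha),$$
where $\ker_{\Fq}(G_\alpha)$ is the space of $\Fq$-dependencies among the columns.

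\textbf{Lower bound.} Take $\widetilde U\in\mathcal L_\delta(U)$ attaining the minimum of $\mu_P$. By Lemma~\ref{lem:cl_weights}, $\widetilde U\cap P$ has dimension $\delta$; pick a basis $(0,w_1),\ldots,(0,w_\delta)$ of it. Each $G^i$ has a preimage in $\widetilde U$ of the form $(G^i,a_i)$, and these $n'$ vectors together with the $(0,w_j)$ form an $\Fq$-basis of $\widetilde U$, because the dimensions match ($n'+\delta$). Set $\alpha=(a_1,\ldots,a_{n'},w_1,\ldots,w_\delta)$. Then $\widetilde U_\alpha=\widetilde U$ and the columns of $G_\alpha$ are independent, so $\rk(u'G_\alpha)=n-\wt_{\widetilde U}(H)\ge n-\mu_P(\widetilde U)$ for all $u'$ with $u_{k+1}\ne0$. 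Lemma~\ref{Lemmacr} then gives $\rho\ge n-\min\mu_P$. The hypothesis $t\le\min\{m,n\}$ in that lemma needs care; otherwise apply the definition of $\rho$ directly, since \ref{cond:equivrho}$\Leftrightarrow$\ref{cond:extended} holds verbatim for any $t$.

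\textbf{Upper bound.} Let $\alpha$ achieve $\rho(\mathcal C)$. If the columns of $G_\alpha$ are $\Fq$-independent, then $\widetilde U_\alpha$ has dimension $n=n'+\delta$ and is a $\delta$-covering lift, and the formula above gives $\rho=\min_{u'}\rk(u'G_\alpha)=n-\mu_P(\widetilde U_\alpha)$. The main obstacle is the degenerate case, where the $\alpha_i$ with $i>n'$ are dependent or $\dim\widetilde U_\alpha<n$. Here the plan is to modify $\alpha$ without decreasing $\min_{c\in\mathcal C}\rk(\alpha-c)$. Specifically, we replace the last $\delta$ entries by an independent $\delta$-tuple spanning a space that contains $\langle\alpha_{n'+1},\ldots,\alpha_n\rangle$, which is possible since $\delta\le m$. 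This only enlarges the column space of each $\alpha-c$, so ranks do not drop.

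If after this step the lift still has dimension below $n$, because some $(G^i,\alpha_i)$ lies in the span of the others, we do not need a new $\alpha$. Instead we take any $\delta$-covering lift $\widetilde U\supseteq\widetilde U_\alpha$, obtained by adjoining vectors of $P$. Enlarging the lift can only increase hyperplane weights, but the formula with the dependency term shows $n-\wt_{\widetilde U}(H)\le\rk(u'G_\alpha)$ after accounting dimensions. This is the step to check carefully: $\dim\ker$ equals $n-\dim\widetilde U_\alpha$, and adjoining $r$ vectors raises each $\wt(H)$ by at most $r$. This yields $\rho\le n-\mu_P(\widetilde U)\le n-\min\mu_P$.
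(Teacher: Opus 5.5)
Your proof is correct and follows the paper's argument: both directions go through $G_\alpha$ and $\widetilde U_\alpha$. The degenerate case is handled, as in the paper, by replacing the last $\delta$ entries of $\alpha$ with an $\Fq$-independent tuple whose span contains the old one (this is where $\delta\le m$ is used). The reverse direction rebuilds $\alpha$ from a basis of the lift adapted to $\widetilde U\cap P$; the paper merely packages the two inequalities as a threshold equivalence for each $t$ first.

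A few small corrections. Your final paragraph treats a case that cannot occur: once $\alpha_{n'+1},\ldots,\alpha_n$ are independent, the columns of $G_\alpha$ are automatically independent, because projecting a dependency yields one among the independent $G^1,\ldots,G^{n'}$. Moreover, the inequality written there is reversed and should read $n-\wt_{\widetilde U}(H)\ge\rk(u'G_\alpha)$. You should also record that $\rho(\mathcal C)\ge1$ forces $\alpha\notin\mathcal C$, so $\widetilde U_\alpha$ spans $\Fm^{k+1}$ and is genuinely a system.
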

\begin{proof}
First, let us prove that for every $1\le t\le\min\{m,n\}$, 
\begin{equation} \label{cond:lift1}
    \rho(\mathcal{C})\ge t\quad\text{if and only if}\quad \exists\;\widetilde U\in\mathcal{L}_{\delta}(U)\quad\text{with}\quad\mu_P(\widetilde U)\le n-t.
\end{equation}
 By Lemma \ref{Lemmacr}, the code $\mathcal C$ has covering radius at least $t$ if and only if there exists $\alpha\in\Fm^n$ such that $\drk(\alpha,\mathcal C)\geq t$. Since $t\geq 1$, necessarily $\alpha\notin\mathcal C$ and $G_\alpha$ has $\Fm$-row rank $k+1$. Thus, $G_\alpha$ is a generator matrix of an $[n,k+1]_{q^m/q}$ code $\mathcal C_\alpha$ satisfying
\begin{equation}\label{eqLemma}
     \min\{\rk(u'G_{\alpha}):u'=(u_1,\ldots,u_k,u_{k+1})\in\Fm^{k+1}, u_{k+1}\neq0\}\geq t.
\end{equation}
Since $u_{k+1}\neq0$, we can normalize the vector $u'$ and consider $u'=(u_1,\ldots,u_k,1)$ by redefining $u_i$ as the ratio ${u_i}/{u_{k+1}}$, for $i\in[k]$. From \eqref{eqLemma} we get 

$$\rk\left((u_1,\ldots,u_k,1)G_{\alpha}\right)\geq t, \quad\text{for every} \;(u_1,\ldots,u_k)\in\Fm^k.$$ 

In order to construct the $[n,k+1]_{q^m/q}$ system associated with $\mathcal{C}_{\alpha}$, we need to prove that we can always take an $\alpha\in\Fm^n$ such that $\mathcal{C}_{\alpha}$ is nondegenerate. By definition,  $\mathcal{C}_{\alpha}$ is nondegenerate if and only if the columns of $G_{\alpha}$ are $\Fq$-independent. The first $n'$ columns of $G_{\alpha}$ are $\Fq$-independent since we have taken a generator matrix for $\mathcal{C}$ of the form \eqref{eq:genmatrix}. Hence, $\mathcal{C}_{\alpha}$ is degenerate if and only if $\alpha_{n'+1},\ldots,\alpha_n$ are $\Fq$-dependent, i.e. there exists a nonnegative integer $s$ such that 
$$s=\dim_{\Fq}(\langle \alpha_{n'+1},\ldots,\alpha_n\rangle_{\Fq})<\delta.$$
Up to permuting the last $\delta$ coordinates of $\alpha$,
we can assume that
$\alpha_{n'+1},\ldots,\alpha_{n'+s}$ form an $\Fq$-basis
of $\langle\alpha_{n'+1},\ldots,\alpha_n\rangle_{\Fq}$. Since $m\ge \delta$, there exist $z_{n'+s+1},\ldots,z_n\in\Fm$ such that $\alpha_{n'+1},\ldots,\alpha_{n'+s},z_{n'+s+1},\ldots,z_n$ are $\Fq$-independent. Define 
$\alpha'=(\alpha_1,\ldots,\alpha_{n'+s},z_{n'+s+1},\ldots,z_n)\in\Fm^n.$
Observe that $$\langle\alpha_{n'+1},\ldots,\alpha_n\rangle_{\Fq}\subseteq \langle \alpha_{n'+1},\ldots,\alpha_{n'+s},z_{n'+s+1},\ldots,z_n\rangle_{\Fq},$$
 and every $c\in\mathcal{C}$ has the form $c=(c_1,\ldots,c_{n'},0,\ldots,0)$, hence $\drk(\alpha',c)\ge \drk(\alpha,c)$ for every $c\in\mathcal{C}$. Thus, it holds also for the minimum over all the codewords, i.e.
$$\drk(\alpha',\mathcal{C})\geq\drk(\alpha,\mathcal{C})\geq t. $$
When $m\ge \delta$, replacing $\alpha$ with $\alpha'$ if necessary, we can assume that $\mathcal{C}_{\alpha}$ is nondegenerate and $\drk(\alpha,\mathcal C)\ge t$.

Since $\mathcal{C}_{\alpha}$ is an $[n,k+1]_{q^m/q}$ nondegenerate code, we can consider the associated $[n,k+1]_{q^m/q}$ system $U_{\alpha}$. First observe that $\pi(U_\alpha)=\langle G^1,\ldots,G^{n'}\rangle_{\Fq}=U$, hence $U_\alpha\in\mathcal{L}_{\delta}(U)$ and by Lemma \ref{lem:cl_weights} we have $$\wt_{U_{\alpha}}(H)=\wt_{U}(\Phi(H))+\delta,$$ for every hyperplane $H\in\mathcal{H}_P^+$.
Moreover, for every $u'\in\Fm^{k+1}$ we have that $\rk(u'G_{\alpha})=n-\wt_{U_{\alpha}}( \langle u'\rangle_{\Fm}^{\perp})$ and
$$\mathcal{H}_P^-=\{H\in\mathcal{H}_{k+1}(q^m):P\notin H\}=\{\langle(u_1,\ldots,u_k,1)\rangle_{\Fm}^{\perp} : (u_1,\ldots,u_k)\in\Fm^k\}.$$
Hence, by \eqref{eqLemma} we get
\begin{equation*}
    n-\wt_{U_{\alpha}}( \langle(u_1,\ldots,u_k,1)\rangle_{\Fm}^{\perp})=\rk((u_1,\ldots,u_k,1)G_{\alpha})\geq t,
\end{equation*}
and this means that $\wt_{U_{\alpha}}(H)\leq n-t$ for every $H\in\mathcal{H}_P^-$, hence $\mu_P(U_\alpha)\le n-t$. 

Conversely, assume there exists $\widetilde U\in\mathcal{L}_{\delta}(U)$ such that $\mu_P(\widetilde U)\le n-t$. Since $\pi(\widetilde U)=U$, for every $i\in [n']$ there exists $\alpha_i\in\Fm$ such that $(G^i,\alpha_i)\in\widetilde U$. By Lemma \ref{lem:cl_weights} $\dim_{\Fq}(\widetilde U\cap P)=\delta$, hence there exist $\alpha_{n'+1},\ldots,\alpha_n\in\Fm$ linearly independent over $\Fq$ such that $\{(0,\ldots,0,\alpha_{n'+1}),\ldots,(0,\ldots,0,\alpha_n)\}$ form a basis for $\widetilde U\cap P$. Since the first $n'$ columns of $G$ are $\Fq$-independent and also $\alpha_{n'+1},\ldots,\alpha_n$ are $\Fq$-independent, we have found a basis of $\widetilde U$ and 
$$\widetilde U=\langle(G^1,\alpha_1),\ldots,(G^{n'},\alpha_{n'}),(0,\alpha_{n'+1}),\ldots,(0,\alpha_n) \rangle_{\Fq},$$
where $0$ denotes the zero-vector in $\Fm^k$. For every $H=\langle(u,1)\rangle_{\Fm}^\perp\in\mathcal{H}_P^-$, with $u\in\Fm^k$, we have
$$\rk(uG+\alpha)=n-\wt_{\widetilde U}(H)\ge n-\mu_P(\widetilde U)\ge t,$$ hence $\rho(\mathcal{C})\ge t$. Let us prove that \eqref{cond:lift1} implies the statement. Applying \eqref{cond:lift1} with $t=\rho(\mathcal C)$ we get a lift $\widetilde U_0\in\mathcal L_\delta(U)$ such that $$\min_{\widetilde U\in\mathcal L_\delta(U)}\mu_P(\widetilde U)\le\mu_P(\widetilde U_0)\le n-\rho(\mathcal C).$$
On the other hand, the preceding construction
associates with every lift $\widetilde U\in\mathcal L_\delta(U)$ a vector $\alpha$ such that $\drk(\alpha,\mathcal C)=n-\mu_P(\widetilde U)$. Hence,
$$\mu_P(\widetilde U)=n-\drk(\alpha,\mathcal C)\ge n-\rho(\mathcal C),$$
for every $\widetilde U\in\mathcal L_\delta(U)$, in particular it holds for the minimum over all the possible $\delta$-covering lifts and the statement follows.

\end{proof}

\begin{lemma} \label{lemma:uppb}
 Let $\mathcal{C}$ be an $[n,k]_{q^m/q}$ code with effective length $n'$, let $1\le t\le\min\{m,n\}$ be a positive integer and let $\delta=n-n'$. If $\rho(\mathcal{C})\ge t$, then 
 \begin{equation} \label{eq:lemmauppb}
     q^n-q^{\delta}\le q^m(q^{n-t}-1).
 \end{equation}
\end{lemma}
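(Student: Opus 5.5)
We argue by counting inside a $\delta$-covering lift, via Theorem~\ref{thm:hyp}. If $\delta>m$ the inequality is immediate: then $q^n-q^\delta\le q^n-q^m= q^m(q^{n-m}-1)\le q^m(q^{n-t}-1)$, because $t\le m$. So assume $\delta\le m$. Since $\rho(\mathcal C)\ge t$, the equivalence \eqref{cond:lift1} established in the proof of Theorem~\ref{thm:hyp} provides $\widetilde U\in\mathcal L_\delta(U)$ with $\mu_P(\widetilde U)\le n-t$. Thus $\widetilde U$ is an $\Fq$-subspace of $\Fm^{k+1}$ of dimension $n$, it satisfies $\wt_{\widetilde U}(P)=\delta$ by Lemma~\ref{lem:cl_weights}, and every hyperplane $H\in\mathcal H_P^-$ has $\wt_{\widetilde U}(H)\le n-t$.

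The key step is to cover the vectors of $\widetilde U\setminus P$ by hyperplanes in $\mathcal H_P^-$ in a controlled way. I would use the $q^{mk}$ hyperplanes $H_u=\langle(u,1)\rangle_{\Fm}^\perp$ with $u\in\Fm^k$, and count the pairs $(x,u)$ with $x\in\widetilde U\setminus\{0\}$ and $x\in H_u$. Write $x=(y,z)$ with $y\in\Fm^k$ and $z\in\Fm$. The condition $x\in H_u$ reads $u\cdot y+z=0$.
\begin{itemize}
\item If $y\ne0$, the solutions $u$ form an affine hyperplane of $\Fm^k$, so there are exactly $q^{m(k-1)}$ of them.
\item If $y=0$, then $x\in P$ and the condition forces $z=0$, which is impossible for $x\neq 0$; so such $x$ lie in no $H_u$.
\end{itemize}
The number of $x\in\widetilde U\setminus\{0\}$ with $y\neq0$ is $q^n-q^\delta$, since these are exactly the nonzero vectors of $\widetilde U$ outside $\widetilde U\cap P$. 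Double counting then gives
$$(q^n-q^\delta)\,q^{m(k-1)}=\sum_{u\in\Fm^k}\bigl(q^{\wt_{\widetilde U}(H_u)}-1\bigr)\le q^{mk}\,(q^{n-t}-1).$$
Dividing by $q^{m(k-1)}$ yields exactly \eqref{eq:lemmauppb}.

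The only delicate points are the edge case $\delta>m$, where Theorem~\ref{thm:hyp} does not apply and the direct estimate above is needed instead, and the correct bookkeeping of vectors lying in $P$ (the $y=0$ case). Both appear to be routine.
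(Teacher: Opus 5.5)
Your proof is correct. The overall structure is the paper's: a trivial estimate when $\delta$ is large, and otherwise a lift $\widetilde U$ from \eqref{cond:lift1} together with a bound on $\sum_{H\in\mathcal H_P^-}|(\widetilde U\setminus\{0\})\cap H|$. The difference is in how you evaluate that sum.

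The paper computes it indirectly. It starts from the total over all hyperplanes, $\sum_{H\in\mathcal H_{k+1}(q^m)}|(\widetilde U\setminus\{0\})\cap H|=(q^n-1)\binom{k}{1}_{q^m}$, quoted from \cite[Lemma 3.6]{ABNR2022}. It then subtracts the contribution of $\mathcal H_P^+$, obtained from the hyperplane part of Lemma~\ref{lem:cl_weights} and the analogous count for $U$ in $\Fm^k$. Finally it simplifies with $\binom{k}{1}_{q^m}-\binom{k-1}{1}_{q^m}=q^{m(k-1)}$, which needs the convention $\binom{0}{1}_{q^m}=0$ when $k=1$.

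You instead double count incidences between vectors of $\widetilde U$ and the hyperplanes $H_u$ directly. A vector $(y,z)$ with $y\neq 0$ lies in exactly $q^{m(k-1)}$ of them, and a nonzero vector of $P$ lies in none. This gives the exact identity $\sum_{H\in\mathcal H_P^-}|(\widetilde U\setminus\{0\})\cap H|=(q^n-q^\delta)q^{m(k-1)}$ in one line. It uses only $\wt_{\widetilde U}(P)=\delta$, with no external counting lemma, no Gaussian binomials and no special convention for $k=1$.

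Both routes end at the same inequality $(q^n-q^\delta)q^{m(k-1)}\le q^{mk}(q^{n-t}-1)$. Yours is shorter and self-contained. The paper's version makes visible how the weights on $\mathcal H_P^+$, inherited from $U$, constrain what is left for $\mathcal H_P^-$.

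Splitting the cases at $\delta>m$ rather than the paper's $\delta\ge m$ is also fine, since Theorem~\ref{thm:hyp} and \eqref{cond:lift1} allow $\delta=m$.
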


\begin{proof} If $\delta\ge m$, then immediately
$$q^m(q^{n-t}-1)=q^{m+n-t}-q^m\ge q^n-q^m\ge q^n-q^{n-n'}=q^n-q^\delta.$$
Assume now that $\delta<m$. Since $\rho(\mathcal{C})\ge t$ we can apply \eqref{cond:lift1} of Theorem \ref{thm:hyp} and there exists $\widetilde U\in\mathcal{L}_\delta(U)$ such that $\mu_P(\widetilde U)\le n-t$. Thus, $\wt_{\widetilde U}(H)\leq n-t$ for every hyperplane $H\in\mathcal{H}_P^-$, while by Lemma \ref{lem:cl_weights} $\wt_{\widetilde U}(H)=\wt_{U}(\Phi(H))+\delta$ for every $H\in\mathcal{H}_P^+$. Hence, we can compute the cardinality of the intersection of $\widetilde U\setminus\{0\}$ with every hyperplane of $\Fm^{k+1}$. Since every $H\in\mathcal{H}_P^-$ has the form $\langle(u_1,\ldots,u_k,1)\rangle_{\Fm}^\perp$, with $u_1,\ldots,u_k\in\Fm$, $|\mathcal{H}_P^-|=q^{mk}$, then
$$\sum_{H\in\mathcal{H}_P^-}\left|(\widetilde U\setminus\{0\})\cap H\right|\le q^{mk}(q^{n-t}-1).$$
For $H\in\mathcal{H}_P^+$ we have that
$$| \widetilde U\cap H|=q^{\delta}| U\cap \Phi(H)|,$$
hence
$$| (\widetilde U\setminus\{0\})\cap H|=q^{\delta}(| (U\setminus\{0\})\cap \Phi(H)|+1)-1.$$
If we sum on all the hyperplanes $H\in\mathcal{H}_P^+$, since $\Phi$ is a bijection, we get
\begin{align*}
    \sum_{H\in\mathcal{H}_P^+}\left|(\widetilde U\setminus\{0\})\cap H\right|&=\sum_{K\in\mathcal{H}_k(q^m)}\left(q^{\delta}(\left|(U\setminus\{0\})\cap K\right|+1)-1\right)\\
    &=q^{\delta}\sum_{K\in\mathcal{H}_k(q^m)}\left|(U\setminus\{0\})\cap K\right| + (q^{\delta}-1)\binom{k}{1}_{q^m} \\
    &=q^{\delta}(q^{n'}-1)\binom{k-1}{1}_{q^m} + (q^{\delta}-1)\binom{k}{1}_{q^m},
\end{align*}
where if $k=1$ we assume $\binom{0}{1}_{q^m}=0$. By \cite[Lemma 3.6]{ABNR2022} the sum of the intersection of $\widetilde U\setminus\{0\}$ with all the hyperplanes of $\Fm^{k+1}$ is
$$\sum_{H\in\mathcal{H}_{k+1}(q^m)}|(\widetilde U\setminus\{0\})\cap H|=(q^n-1)\binom{k}{1}_{q^m}.$$
Since 
$$\sum_{H\in\mathcal{H}_{k+1}(q^m)}|(\widetilde U\setminus\{0\})\cap H|=\sum_{H\in\mathcal{H}_P^+}|(\widetilde U\setminus\{0\})\cap H|+\sum_{H\in\mathcal{H}_P^-}|(\widetilde U\setminus\{0\})\cap H|,$$
we get
\begin{equation*} 
    (q^n-1)\binom{k}{1}_{q^m}\leq (q^{mk+n-t}-q^{mk})+(q^n-q^\delta)\binom{k-1}{1}_{q^m}+(q^\delta-1)\binom{k}{1}_{q^m}.
\end{equation*}
Rearranging the terms we get 
\begin{equation*} 
    (q^n-q^\delta)\left(\binom{k}{1}_{q^m}-\binom{k-1}{1}_{q^m}\right)\leq q^{mk}(q^{n-t}-1).
\end{equation*}
Since $\left(\binom{k}{1}_{q^m}-\binom{k-1}{1}_{q^m}\right)=q^{m(k-1)}$, we get \eqref{eq:lemmauppb}.

\end{proof}

\begin{proposition} \label{prop:uppb}
    Let $\mathcal{C}$ be an $[n,k]_{q^m/q}$ code with effective length $n'$ and let $\delta=n-n'$. Then $\rho(\mathcal{C})=m$ if and only if $\delta\geq m$. In particular, if $\delta<m$, then $\rho(\mathcal{C})\leq\min\{m,n\}-1.$
\end{proposition}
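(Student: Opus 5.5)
The argument proves the two directions of the equivalence separately. Throughout we use $\rho(\mathcal C)\le\min\{m,n-k\}\le m$, which holds because every vector of $\Fm^n$ has rank at most $m$.

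\emph{Sufficiency ($\delta\ge m$ implies $\rho(\mathcal C)=m$).} Take $G$ in the form \eqref{eq:genmatrix}, so every codeword has the shape $c=(c_1,\ldots,c_{n'},0,\ldots,0)$. Since $\delta\ge m$, we can choose $\alpha\in\Fm^n$ with $\alpha_1=\cdots=\alpha_{n'}=0$ and with $\alpha_{n'+1},\ldots,\alpha_{n'+m}$ an $\Fq$-basis of $\Fm$. For every $c\in\mathcal C$, the last $\delta$ entries of $\alpha-c$ equal those of $\alpha$, so they already span $\Fm$ over $\Fq$. Hence $\rk(\alpha-c)=m$ for all $c\in\mathcal C$, which gives $\rho(\mathcal C)\ge m$ and therefore $\rho(\mathcal C)=m$. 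Note that $m\le\delta\le n$, so $t=m$ is admissible in Lemma~\ref{Lemmacr}; we do not need it anyway, since this is a direct argument.

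\emph{Necessity ($\rho(\mathcal C)=m$ implies $\delta\ge m$).} This is where Lemma~\ref{lemma:uppb} is used. Suppose $\rho(\mathcal C)=m$. Then $m\le\min\{m,n\}$, so we may apply the lemma with $t=m$, which yields
\[q^n-q^\delta\le q^m(q^{n-m}-1)=q^n-q^m.\]
This gives $q^\delta\ge q^m$, that is, $\delta\ge m$. The only subtlety is checking that $t=m$ satisfies $t\le n$. This follows because $\rho(\mathcal C)\le n-k<n$, so $\rho(\mathcal C)=m$ forces $m<n$.

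\emph{The ``in particular'' statement.} Assume $\delta<m$. By the equivalence, $\rho(\mathcal C)\neq m$, and combined with $\rho(\mathcal C)\le m$ this gives $\rho(\mathcal C)\le m-1$. We also need $\rho(\mathcal C)\le n-1$. This follows from $\rho(\mathcal C)\le n-k$ together with $k\ge1$. To make this self-contained, we observe that $\rho(\mathcal C)=n$ would require $n\le m$ and, by Lemma~\ref{lemma:uppb} with $t=n$, the inequality $q^n-q^\delta\le 0$. That would force $\delta\ge n$, i.e.\ $n'=0$, which is impossible since $k\ge1$. Hence $\rho(\mathcal C)\le\min\{m,n\}-1$.

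The main obstacle is only bookkeeping: making sure the hypothesis $t\le\min\{m,n\}$ of Lemma~\ref{lemma:uppb} holds in each application. The counting work itself has already been done in that lemma.
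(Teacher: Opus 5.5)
Your proposal is correct and follows the paper's proof: the same explicit vector, supported on $m$ of the identically-zero coordinates, settles $\delta\ge m$, and Lemma~\ref{lemma:uppb} gives everything else. The only difference is organisational. You apply the lemma once with $t=m$, after noting that $\rho(\mathcal C)\le n$ forces $m\le n$, which gives the converse uniformly, and you then get $\rho(\mathcal C)\le n-1$ separately; the paper instead splits $\delta<m$ into the cases $m\ge n$ (lemma with $t=n$) and $m<n$ (lemma with $t=m$).
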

\begin{proof}

Assume that $\delta\ge m$. Recall that every $c\in\mathcal{C}$ has the form $c=(c_1,\ldots,c_{n'},0,\ldots,0)$. Take $x=(0,\ldots,0,x_{n'+1},\ldots,x_{n'+m},0,\ldots,0)$ where $x_{n'+1},\ldots,x_{n'+m}\in\Fm$ are $\Fq$-independent. Since for every $c\in\mathcal{C}$ the vector $x-c$ has at least $m$ coordinates $\Fq$-independent, $\drk(x,c)\ge m$ for every $c\in\mathcal{C}$. Hence, $\drk(x,\mathcal{C})\ge m$ and  $\rho(\mathcal{C})\ge m$, but the covering radius cannot exceed $m$, so $\rho(\mathcal{C})=m$.

Let $m\ge n$ and assume by contradiction that $\rho(\mathcal{C})\ge n$, then by Lemma \ref{lemma:uppb}
\begin{equation*}
    q^n-q^\delta\le q^m(q^{n-n}-1)=0.
\end{equation*}
Since the code is nonzero, $n'>0$, hence $q^n-q^\delta>0$ and we get a contradiction. We can conclude that $\rho(\mathcal{C})\le n-1$.

Let $m-n'<\delta<m$ or, equivalently, $m<n< m+n'$ and assume by contradiction that $\rho(\mathcal{C})\ge m$. By Lemma \ref{lemma:uppb}
\begin{equation*}
    q^n-q^\delta\le q^m(q^{n-m}-1)=q^n-q^m,
\end{equation*}
hence $q^\delta\ge q^m$ but this is a contradiction since by hypothesis $\delta<m$. Thus, in this case $\rho(\mathcal{C})\le m-1$.

\end{proof}

In order to apply the characterization obtained in Theorem \ref{thm:hyp}, it is useful to compare codes of length $n\ge m$ with suitable puncturings of length $m$, as shown in the following proposition.

\begin{proposition} \label{prop:tagli}
    Let $n\ge m$ and let $r$ be a positive integer such that $k\le r\le m$. Assume that every $[m,k]_{q^m/q}$ code with effective length $r$ has covering radius $m-1$. Then, for $s\in\{0,\ldots,n-m\}$ every $[n,k]_{q^m/q}$ code with effective length $s+r$ has covering radius at least $m-1$.
\end{proposition}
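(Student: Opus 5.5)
Let $\mathcal C$ be an $[n,k]_{q^m/q}$ code with effective length $n'=s+r$, where $0\le s\le n-m$. I will reduce to length $m$ by puncturing $s+(n-m-s)=n-m$ suitable coordinates, in such a way that the punctured code has length $m$ and effective length exactly $r$. The hypothesis then gives a vector of length $m$ at distance $m-1$ from the punctured code, and I will lift it back to length $n$.

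\textbf{Step 1: normal form.} Up to linear equivalence, which preserves the covering radius, take $G$ of the form \eqref{eq:genmatrix}: the columns $G^1,\ldots,G^{s+r}$ are $\Fq$-independent and the remaining $n-s-r$ columns are zero.

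\textbf{Step 2: choose the punctured coordinates.} I want $\mathcal I\subseteq[n]$ with $|\mathcal I|=n-m$ such that $\Pi(\mathcal C,\mathcal I)$ has effective length $r$ and dimension $k$. Take $\mathcal I$ to consist of:
\begin{itemize}[label={}]
\item $s$ of the independent columns, chosen so that the remaining $r$ independent columns still span a system $\langle G^{i}\rangle_{\Fq}$ with $\langle\cdot\rangle_{\Fm}=\Fm^k$;
\item $n-m-s$ of the zero columns.
\end{itemize}
This is possible because there are $n-s-r\ge n-m-s$ zero columns, using $r\le m$. The $\Fm$-span condition is the main obstacle. Since $U$ spans $\Fm^k$ over $\Fm$, I can pick $k$ columns that are $\Fm$-independent, hence $\Fq$-independent. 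I then extend these to $r\ge k$ of the $\Fq$-independent columns and puncture the other $s$. The remaining $r$ columns are $\Fq$-independent and span $\Fm^k$. The punctured code therefore has dimension $k$, length $(s+r)-s+(m-r)=m$, and effective length $r$. Note that if $r$ columns were not spanning, the dimension would drop, which is exactly why the condition $k\le r$ is needed.

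\textbf{Step 3: lift back.} By hypothesis, $\rho(\Pi(\mathcal C,\mathcal I))=m-1$, so there is $\beta\in\Fm^m$ with $\rk(\beta-c')\ge m-1$ for all $c'\in\Pi(\mathcal C,\mathcal I)$. Define $\alpha\in\Fm^n$ by placing $\beta$ in the coordinates of $[n]\setminus\mathcal I$ and zeros in $\mathcal I$. For any $c\in\mathcal C$, the vector $\pi_{\mathcal I}(\alpha-c)=\beta-\pi_{\mathcal I}(c)$ is a subvector of $\alpha-c$. Since deleting coordinates cannot increase the rank, $\rk(\alpha-c)\ge\rk(\beta-\pi_{\mathcal I}(c))\ge m-1$. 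By Lemma~\ref{Lemmacr}, $\rho(\mathcal C)\ge m-1$.

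The only genuinely delicate point is Step 2: guaranteeing that puncturing $s$ of the independent columns preserves dimension $k$ while leaving exactly $r$ independent columns. This rests on $k\le r$, on $\langle U\rangle_{\Fm}=\Fm^k$, and on the count of zero columns, which uses $r\le m$.
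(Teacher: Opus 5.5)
Your proposal is correct and follows the paper's proof essentially step for step. You put $G$ in normal form, keep $r$ of the $\Fq$-independent columns containing $k$ $\Fm$-independent ones together with $m-r$ zero columns, puncture the remaining $n-m$ coordinates, and lift a vector at distance $m-1$ from the punctured code back to length $n$, using that deleting coordinates cannot increase rank.
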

\begin{proof}
    Let $\mathcal{C}$ be an $[n,k]_{q^m/q}$ code with effective length $r+s$ and assume it has generator matrix $G$ of the form \eqref{eq:genmatrix}. Since $G$ has $\Fm$-row rank $k$, its columns span $\Fm^k$ and among $G^1,\ldots,G^{r+s}$, we can choose $k$
    columns which are $\Fm$-independent. Since $r\geq k$, we can
    extend this set to a set $\mathcal J=\{j_1,\ldots,j_r\}\subseteq\{1,\ldots,r+s\}$ of cardinality $r$. Moreover, since $s\leq n-m$, we have $n-(r+s)\geq m-r$. Hence, among the zero columns of $G$, we can choose a set $\mathcal Z\subseteq\{r+s+1,\ldots,n\}$ of cardinality $m-r$. Let $\mathcal{I}=[n]\setminus (\mathcal{J}\cup \mathcal Z)$, then $|\mathcal{I}|=n-m$. Consider the punctured code
    \begin{equation*}
        \Pi(\mathcal{C,\mathcal{I}})=\mathrm{rowsp}\begin{pmatrix}
            
G^{j_1} \;\, \dots \;\; G^{j_r} \;\; 0 \;\;\dots \;\; 0
            
        \end{pmatrix}.
    \end{equation*}
     The columns $G^{j_1},\ldots,G^{j_r}$ are $\Fq$-independent,
    since they are a subset of $G^1,\ldots,G^{r+s}$. Thus,
    $\Pi(\mathcal C,\mathcal I)$ has effective length $r$.
    Moreover, these columns contain $k$ columns which are $\Fm$-independent. Hence, this matrix has $\Fm$-row rank $k$ and thus $\Pi(\mathcal{C,\mathcal{I}})$ is an $[m,k]_{q^m/q}$ code with effective length $r$. By hypothesis $\rho(\Pi(\mathcal{C,\mathcal{I}}))= m-1$. Hence, by Lemma \ref{Lemmacr}, there exists $\alpha=(\alpha_1,\ldots,\alpha_m)\in\Fm^m$ such that 
    $$\drk((u G)^{[n]\setminus\mathcal{I}},\alpha)=\rk\left((u G)^{[n]\setminus\mathcal{I}}-\alpha\right)\ge m-1 \quad\text{for every } u\in\Fm^k.$$
Let $\alpha'\in\Fm^n$ such that $\pi_{\mathcal{I}}(\alpha')=\alpha$, then
$$\drk(uG,\alpha')\ge\drk((uG)^{[n]\setminus\mathcal{I}},\alpha)\ge m-1\quad\text{ for every } u\in\Fm^k.$$
Thus, $\rho(\mathcal{C})\ge\rho(\Pi(\mathcal{C},\mathcal{I}))\ge m-1$.

\end{proof}

\section{Covering radius of $[n,1]_{q^m/q}$ codes} \label{sec:k1}
In this section, we focus on $[n,1]_{q^m/q}$ codes. We specialize the results obtained in the previous section, and in particular Theorem \ref{thm:hyp}, to the case of $1$-dimensional rank-metric codes. Since in this setting the covering lifts lie in $\Fm^2$, weight conditions on hyperplanes translate into weight conditions on points. This leads to a connection with a well-studied class of $\Fq$-subspaces known as $t$-clubs. Unlike the Hamming-metric case, $1$-dimensional rank-metric codes with the same length and minimum distance are not necessarily equivalent. Indeed, in the Hamming metric, any two $[n,1,d]_q$ codes are equivalent once $n$ and $d$ are fixed, thus their covering radius can be determined by studying a single representative. In the rank-metric setting, however, there exist $[n,1,d]_{q^m/q}$ codes with the same length and minimum distance but with different covering radii.

\begin{remark} \label{rem:n'd}
    As in the general case, for $k=1$ we can consider the notion of effective length. However, since a generator matrix for an $[n,1]_{q^m/q}$ code is an element of $\Fm^{1\times n}$, the columns are elements of $\Fm$ and the effective length coincides with the minimum distance of the code. Indeed, let $\mathcal{C}$ be an $[n,1,d]_{q^m/q}$ code and let $(v_1,\ldots,v_n)\in\Fm^{1\times n}$ be a generator matrix for $\mathcal{C}$. The effective length is
    $$n'=\dim_{\Fq}(\langle v_1,\ldots,v_n\rangle_{\Fq})=\rk(v_1,\ldots,v_n)=d.$$
    As in the previous section, there exists $A\in\mathrm{GL}(n,q)$ such that $(v_1,\ldots,v_n)A=(\tilde{v}_1,\ldots,\tilde{v}_d,0,\ldots,0)$. Since this operation preserves the covering radius, from now on we can assume that every $[n,1,d]_{q^m/q}$ code, up to linear equivalence, is generated by a vector of the form $(v_1,\ldots,v_d,0,\ldots,0)$, where $v_1,\ldots,v_d\in\Fm$ are $\Fq$-independent.
    
\end{remark}

\begin{lemma}\label{lem:equivalence1d}
Let $ \mathcal C=\left\langle(v_1,\ldots,v_d,0,\ldots,0)\right\rangle_{\Fm}
$ and
$\mathcal C'=\left\langle(w_1,\ldots,w_d,0,\ldots,0)\right\rangle_{\Fm}$
be two $[n,1,d]_{q^m/q}$ codes. Let
$V=\langle v_1,\ldots,v_d\rangle_{\Fq}$ and $V'=\langle w_1,\ldots,w_d\rangle_{\Fq}.$
Then
$\mathcal C\sim\mathcal C'$ if and only if  $V'=\lambda V$ for some $\lambda\in\Fm^*$.

\end{lemma}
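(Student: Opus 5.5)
We prove the two implications separately, working directly with the definition of linear equivalence: $\mathcal C\sim\mathcal C'$ means $\mathcal C'=\mathcal C A$ for some $A\in\mathrm{GL}(n,q)$. Write $v=(v_1,\ldots,v_d,0,\ldots,0)$ and $w=(w_1,\ldots,w_d,0,\ldots,0)$, so that $\mathcal C=\langle v\rangle_{\Fm}$ and $\mathcal C'=\langle w\rangle_{\Fm}$. The key observation is that for any $x\in\Fm^n$ and $A\in\mathrm{GL}(n,q)$, the $\Fq$-span of the entries of $xA$ equals the $\Fq$-span of the entries of $x$. Indeed, each entry of $xA$ is an $\Fq$-combination of entries of $x$, and the reverse inclusion follows by applying $A^{-1}$.

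For the implication ($\Rightarrow$), assume $\mathcal C'=\mathcal CA$. Then $vA$ is a nonzero element of $\mathcal C'$, so $vA=\lambda w$ for some $\lambda\in\Fm^*$. Taking the $\Fq$-spans of the entries on both sides gives
$$V=\langle v_1,\ldots,v_d\rangle_{\Fq}=\langle (vA)_1,\ldots,(vA)_n\rangle_{\Fq}=\lambda\langle w_1,\ldots,w_d\rangle_{\Fq}=\lambda V'.$$
Hence $V'=\lambda^{-1}V$, which is the required form.

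For the implication ($\Leftarrow$), assume $V'=\lambda V$. Then $(\lambda v_1,\ldots,\lambda v_d)$ and $(w_1,\ldots,w_d)$ are two $\Fq$-bases of the same $d$-dimensional $\Fq$-space $V'$. So there is $B\in\mathrm{GL}(d,q)$ with $(\lambda v_1,\ldots,\lambda v_d)B=(w_1,\ldots,w_d)$; this uses only the fact that $B$ has entries in $\Fq$ and commutes with scalar multiplication by $\lambda$. Set $A=\mathrm{diag}(B,I_{n-d})\in\mathrm{GL}(n,q)$. Then $\lambda vA=w$, and by $\Fm$-linearity
$$\mathcal CA=\langle vA\rangle_{\Fm}=\langle \lambda vA\rangle_{\Fm}=\langle w\rangle_{\Fm}=\mathcal C'.$$

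The only step needing care is the span-invariance observation in the first paragraph, and it is routine. One could alternatively invoke Theorem~\ref{thm:correspondence} for the nondegenerate codes of length $d$ together with the zero padding, since the $q$-system of a $1$-dimensional code is exactly $V$ up to $\mathrm{GL}(1,q^m)=\Fm^*$. However, the zero padding would need separate justification under that route, so the direct argument above is preferable.
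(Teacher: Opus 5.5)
Your proof is correct and follows essentially the same argument as the paper: invariance of the $\Fq$-span of the coordinates under right multiplication by $\mathrm{GL}(n,q)$ for the forward direction, and the block-diagonal matrix $\mathrm{diag}(B,I_{n-d})$ built from a change of $\Fq$-basis of $V'$ for the converse. The only cosmetic difference is that you write $vA=\lambda w$ and then pass to $\lambda^{-1}$, whereas the paper writes $w=\lambda vA$ directly.
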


\begin{proof}
Let $v=(v_1,\ldots,v_d,0,\ldots,0)$ and $w=(w_1,\ldots,w_d,0,\ldots,0).$ Suppose first that $\mathcal C\sim\mathcal C'$. Then there exists
$A\in\mathrm{GL}(n,q)$ such that
$\mathcal C'=\mathcal CA.$
Since $ \langle w\rangle_{\Fm}=\langle vA\rangle_{\Fm}$, there exists $\lambda\in\Fm^*$ such that
$ w=\lambda vA.$ Since $A\in\mathrm{GL}(n,q)$, the coordinates of $vA$ are $\Fq$-linear combinations of the coordinates of $v$, hence
$\left\langle(vA)_1,\ldots,(vA)_n\right\rangle_{\Fq}\subseteq V,$
but also $ v=(vA)A^{-1}$ and the
reverse inclusion holds. Hence,
$$V'=\langle w_1,\ldots,w_d\rangle_{\Fq}=\lambda\left\langle( vA)_1,\ldots,( vA)_n\right\rangle_{\Fq}=\lambda V.$$
Conversely, suppose that
$V'=\lambda V$ for some $\lambda\in\Fm^*$. Therefore, $w_1,\ldots,w_d$ and
$\lambda v_1,\ldots,\lambda v_d$ are two $\Fq$-bases of
$V'$ and there exists
$B\in\mathrm{GL}(d,q)$ such that
$(w_1,\ldots,w_d)=\lambda(v_1,\ldots,v_d)B.$
If we consider the matrix
$$
A=
\begin{pmatrix}
B & 0\\
0 & I_{n-d}
\end{pmatrix}
\in\mathrm{GL}(n,q),
$$
then $w=\lambda vA.$ Since multiplication by the nonzero scalar $\lambda$ does not change the $\Fm$-span of a vector, we obtain
$$\mathcal C'=\langle w\rangle_{\Fm}=\langle\lambda vA\rangle_{\Fm}=\langle vA\rangle_{\Fm}=\mathcal CA,$$
and the statement holds.

\end{proof}

Thus, the equivalence class of a $1$-dimensional code $\mathcal C$ is completely determined by the orbit of $V$ under scalar multiplication by $\Fm^*$.

\medskip

Throughout this section, the point $P$ introduced above specializes to $P=\langle(0,1)\rangle_{\Fm}$.
We specialize Theorem \ref{thm:hyp} and Proposition \ref{prop:uppb} to $k=1$, recalling that in this case, as just observed in Remark \ref{rem:n'd} the effective length coincides with the minimum distance.

\begin{corollary} \label{cor:k1}
   Let $m,n$ and I $d$ be positive integers and let $\delta=n-d$ such that $\delta\le m$. Let $\mathcal C=\langle (v_1,\ldots,v_d,0,\ldots,0)\rangle_{\Fm}$ be an $[n,1,d]_{q^m/q}$ code and let $V=\langle v_1,\ldots,v_d\rangle_{\Fq}$. Then 
   $$\rho(\mathcal{C})=n-\min_{\widetilde U\in\mathcal{L}_\delta(V)}\mu_P(\widetilde U).$$
   
\end{corollary}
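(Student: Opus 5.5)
This is a direct specialization of Theorem~\ref{thm:hyp} to $k=1$, so the plan is to check its hypotheses and identify the objects appearing in it.

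First, I would invoke Remark~\ref{rem:n'd}. For an $[n,1,d]_{q^m/q}$ code the effective length satisfies $n'=d$, so $\delta=n-n'=n-d$. The assumption $\delta\le m$ is therefore exactly the hypothesis of Theorem~\ref{thm:hyp}.

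Next, I would identify the $q$-system. The generator matrix $G=(v_1,\ldots,v_d,0,\ldots,0)$ is already in the form~\eqref{eq:genmatrix}, since $v_1,\ldots,v_d$ are $\Fq$-independent. Its nonzero columns span $V=\langle v_1,\ldots,v_d\rangle_{\Fq}\subseteq\Fm$. Moreover $V\neq 0$, so $\langle V\rangle_{\Fm}=\Fm$ and $V$ is not contained in any hyperplane of $\Fm^1$, the only hyperplane being $\{0\}$. Hence $V$ is an $[d,1]_{q^m/q}$ system and plays the role of $U$.

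Then I would specialize the remaining objects to $k=1$. The projection $\pi$ becomes the map $\Fm^2\to\Fm$ onto the first coordinate, and $P=\langle(0,1)\rangle_{\Fm}$. Hyperplanes of $\Fm^2$ are points, so $\mathcal H_P^-$ consists of the points of $\Fm^2$ distinct from $P$, and $\mu_P$ is the maximum weight of such points. With these identifications, $\mathcal L_\delta(V)$ is exactly $\mathcal L_\delta(U)$, and the formula in Theorem~\ref{thm:hyp} reads verbatim as the claimed one.

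There is no real obstacle here. The only point needing care is confirming that $V$ qualifies as a $q$-system in the sense required by Definition~\ref{def:covering_lift}, and this follows from $d\ge1$.
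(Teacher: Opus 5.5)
Your proposal is correct and follows the paper's route: the paper obtains this corollary by specializing Theorem~\ref{thm:hyp} to $k=1$, using Remark~\ref{rem:n'd} to identify the effective length $n'$ with $d$, so that $\delta=n-d$ and $V$ plays the role of the associated system $U$. The checks you add — that $V$ is a genuine $[d,1]_{q^m/q}$ system and that $\mathcal{H}_P^-$ consists of the points of $\Fm^2$ other than $P$ — are bookkeeping the paper leaves implicit.
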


\begin{corollary} \label{cor:uppb}
    Let $\mathcal{C}$ be an $[n,1,d]_{q^m/q}$ code and let $\delta=n-d$. Then $\rho(\mathcal{C})=m$ if and only if $\delta\ge m$. In particular, if $\delta<m$, then $\rho(\mathcal{C})\le\min\{m,n\}-1$.
\end{corollary}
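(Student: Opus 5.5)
This is the specialisation to $k=1$ of Proposition~\ref{prop:uppb}, so the plan is to invoke it directly and check that its hypotheses and quantities translate correctly. Let $\mathcal C$ be an $[n,1,d]_{q^m/q}$ code. By Remark~\ref{rem:n'd}, the effective length $n'$ of $\mathcal C$ equals its minimum distance $d$: a generator matrix is a single row $(v_1,\ldots,v_n)$, and the $\Fq$-dimension of the span of its entries is both $n'$ and the rank of the generator, hence $d$. Therefore $\delta=n-n'=n-d$, and this is the quantity in the statement.

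Proposition~\ref{prop:uppb} states that for any $[n,k]_{q^m/q}$ code with effective length $n'$ and $\delta=n-n'$, we have $\rho(\mathcal C)=m$ if and only if $\delta\ge m$. It also states that $\rho(\mathcal C)\le\min\{m,n\}-1$ whenever $\delta<m$. Applying it with $k=1$ and $n'=d$ gives both assertions of Corollary~\ref{cor:uppb} at once.

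The only point needing care is that the standing assumption $0<k<n$ holds with $k=1$, which is true since $n>1$. For completeness, the mechanism behind the proposition is as follows. When $\delta\ge m$, take a vector supported on $m$ of the zero coordinates with $\Fq$-independent entries; it has distance $m$ from every codeword. When $\delta<m$, the counting inequality of Lemma~\ref{lemma:uppb} rules out $\rho(\mathcal C)\ge\min\{m,n\}$: the case $m\ge n$ gives $q^n-q^\delta\le 0$, and the case $m<n$ gives $q^\delta\ge q^m$. There is no real obstacle beyond the identification $n'=d$.
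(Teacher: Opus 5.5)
Your proposal is correct and follows the paper's own route: the corollary is stated there as the $k=1$ specialization of Proposition~\ref{prop:uppb}, using the identification $n'=d$ from Remark~\ref{rem:n'd}. Your recap of the mechanism (the explicit far vector when $\delta\ge m$, and the counting inequality of Lemma~\ref{lemma:uppb} in the two subcases $m\ge n$ and $m<n$ when $\delta<m$) also matches the proof of that proposition.
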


\subsection{Codes attaining the upper bound}
When $m\ge n$ Corollary \ref{cor:k1} provides a correspondence between codes with  covering radius $n-1$ and clubs. When $m<n$ clubs give sufficient conditions for the covering radius to be at least $m-1$. This new geometric perspective can help in determining the covering radius of $1$-dimensional codes.

\begin{corollary} \label{cor:corrclub}
    Let $\mathcal{C}=\langle (v_1,\ldots,v_d,0,\ldots,0)\rangle_{\Fm}$ be an $[n,1,d]_{q^m/q}$ code with $m\geq n$, let $V=\langle v_1,\ldots,v_d\rangle_{\Fq}$ and let $\delta=n-d$. Then $\rho(\mathcal{C})=n-1$  if and only if there exists $\widetilde U\in\mathcal{L}_\delta(V)$ which is a $\delta$-club of rank $n$ with $\langle(0,1)\rangle_{\Fm}$ of weight $\delta$.
\end{corollary}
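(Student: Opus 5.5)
We apply Corollary \ref{cor:k1}, which holds here because $\delta=n-d\le n\le m$. With $k=1$ the ambient space of a lift is $\Fm^2$, so its hyperplanes are exactly the points. The only hyperplane in $\mathcal H_P^+$ is $P=\langle(0,1)\rangle_{\Fm}$ itself, and $\mathcal H_P^-$ consists of all points $Q\neq P$. Hence for $\widetilde U\in\mathcal L_\delta(V)$ we have
$$\mu_P(\widetilde U)=\max_{Q\neq P}\wt_{\widetilde U}(Q).$$
Corollary \ref{cor:k1} then gives
$$\rho(\mathcal C)=n-\min_{\widetilde U\in\mathcal L_\delta(V)}\mu_P(\widetilde U).$$
Consequently, $\rho(\mathcal C)=n-1$ holds if and only if some lift satisfies $\mu_P(\widetilde U)=1$, or equivalently $\mu_P(\widetilde U)\le 1$. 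The upper bound $\rho(\mathcal C)\le n-1$ from Corollary \ref{cor:uppb} shows that the minimum can never be $0$. In any case $\mu_P\ge 1$ directly, because $\widetilde U\not\subseteq P$ since $\pi(\widetilde U)=V\neq 0$.

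Next we translate $\mu_P(\widetilde U)\le1$ into the club condition. By definition $\widetilde U$ has $\Fq$-dimension $d+\delta=n$, and Lemma \ref{lem:cl_weights} gives $\wt_{\widetilde U}(P)=\delta$. Therefore $\mu_P(\widetilde U)\le 1$ says exactly that every point other than $P$ has weight at most one. This is precisely the statement that $\widetilde U$ is a $\delta$-club of rank $n$ with $P$ as the point of weight $\delta$. This holds for $\delta\in\{0,1\}$ as well, where the definition of $t$-club allows these degenerate cases.

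For the converse direction, suppose $\widetilde U\in\mathcal L_\delta(V)$ is such a club. Then every $Q\ne P$ has $\wt_{\widetilde U}(Q)\le 1$, so $\mu_P(\widetilde U)\le1$. The formula above then yields $\rho(\mathcal C)\ge n-1$, and combined with Corollary \ref{cor:uppb} this gives equality.

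The only delicate point is bookkeeping. We must check that the lift automatically has dimension $n$, which follows from the definition of $\delta$-covering lift. We must also check that the condition $\delta<m$ needed for Corollary \ref{cor:uppb} holds. If $\delta=m$, then $m\ge n$ forces $d=0$, which is impossible; so $\delta<m$. No other obstacle arises.
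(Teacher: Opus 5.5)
Your proposal is correct and follows the same route as the paper. You use the upper bound $\rho(\mathcal C)\le n-1$ from Corollary~\ref{cor:uppb} and Corollary~\ref{cor:k1} to reduce the question to the existence of a lift with $\mu_P(\widetilde U)\le 1$, and then Lemma~\ref{lem:cl_weights} to identify such a lift with a $\delta$-club whose point of weight $\delta$ is $P$. Your explicit checks that $\delta<m$ and $\mu_P(\widetilde U)\ge 1$ spell out what the paper's short proof leaves implicit.
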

\begin{proof}
Since $m\ge n$, by Corollary \ref{cor:uppb} we have $\rho(\mathcal{C})\le \min\{m,n\}-1=n-1$.
    Moreover, by Corollary \ref{cor:k1} we have that $\rho(\mathcal{C})\geq n-1$ and hence the equality, if and only if there exists $\widetilde U\in\mathcal{L}_{\delta}(V)$ such that $\mu_P(\widetilde U)\le 1$.
    By Lemma \ref{lem:cl_weights}, $\wt_{\widetilde U}(\langle(0,1)\rangle_{\Fm})=\delta$, hence $\widetilde U$ is a $\delta$-covering lift which is a $\delta$-club of rank $n$.

\end{proof}

\begin{corollary} \label{cor:n>m}
    Let $m,n$ and $d$ be positive integers, let $\delta=n-d$ such that $m-d<\delta<m$, let $\mathcal{C}=\langle (v_1,\ldots,v_d,0,\ldots,0)\rangle_{\Fm}$ be an $[n,1,d]_{q^m/q}$ code and let $V=\langle v_1,\ldots,v_d\rangle_{\Fq}$. If there exists $\widetilde U\in\mathcal
    L_{m-d}(V)$ which is an $(m-d)$-club of rank $m$ with $\wt_{\widetilde U}(\langle(0,1)\rangle_{\Fm})=m-d$, then $\rho(\mathcal C)=m-1$.  
\end{corollary}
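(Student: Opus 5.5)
The plan is to reduce to a code of length $m$ and then invoke Corollary~\ref{cor:corrclub}, or argue directly through Theorem~\ref{thm:hyp} for that shorter code. The upper bound is already available. Since $\delta<m$, Corollary~\ref{cor:uppb} gives $\rho(\mathcal C)\le\min\{m,n\}-1$. The hypothesis $m-d<\delta$ means $n=d+\delta>m$, so this bound reads $\rho(\mathcal C)\le m-1$. It therefore remains to show $\rho(\mathcal C)\ge m-1$.

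For the lower bound, consider the punctured code $\mathcal C'=\langle(v_1,\ldots,v_d,0,\ldots,0)\rangle_{\Fm}\subseteq\Fm^m$. It keeps the $d$ independent coordinates and exactly $m-d$ of the zero coordinates, which is possible because $\delta>m-d$. This is an $[m,1,d]_{q^m/q}$ code with associated subspace $V$ and parameter $\delta'=m-d\le m$. Since $m\ge m$, Corollary~\ref{cor:corrclub} applies to $\mathcal C'$. The hypothesised $\widetilde U\in\mathcal L_{m-d}(V)$, which is an $(m-d)$-club of rank $m$ with $\langle(0,1)\rangle_{\Fm}$ of weight $m-d$, is exactly the object that corollary requires. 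Hence $\rho(\mathcal C')=m-1$.

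Concretely, there is $\alpha\in\Fm^m$ with $\rk(\lambda v'-\alpha)\ge m-1$ for every $\lambda\in\Fm$, where $v'=(v_1,\ldots,v_d,0,\ldots,0)\in\Fm^m$. This vector comes from the proof of Theorem~\ref{thm:hyp}: a basis of $\widetilde U$ gives the last row $\alpha$. Now extend $\alpha$ to $\alpha''\in\Fm^n$ by inserting zeros in the $n-m$ deleted zero positions. For each $\lambda$, the entries of $\lambda v-\alpha''$ contain the entries of $\lambda v'-\alpha$, and the extra entries are zeros. So the $\Fq$-span of the entries of $\lambda v-\alpha''$ contains that of $\lambda v'-\alpha$, and $\rk(\lambda v-\alpha'')\ge m-1$. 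This is the same puncturing argument as at the end of Proposition~\ref{prop:tagli}, now applied to one specific code rather than to all codes of given effective length. Thus $\rho(\mathcal C)\ge m-1$, and together with the upper bound, $\rho(\mathcal C)=m-1$.

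The main point needing care is that Corollary~\ref{cor:corrclub} is applied with the correct parameters. The punctured code must have length exactly $m$, so that $m\ge n$ holds for it, and its $\delta$ must be $m-d$, matching the club weight and the lift family $\mathcal L_{m-d}(V)$. The condition $m-d<\delta$ is what guarantees enough zero coordinates to puncture down to this. Everything else is routine bookkeeping.
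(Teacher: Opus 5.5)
Your proof is correct, but it is organised differently from the paper's.

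\textbf{Your route.} You reduce to length $m$. Corollary~\ref{cor:corrclub} gives $\rho(\mathcal C')=m-1$ for the punctured code, and padding a deep hole of $\mathcal C'$ with zeros transfers the bound to $\mathcal C$. In other words, you use the monotonicity $\rho(\mathcal C)\ge\rho(\Pi(\mathcal C,\mathcal I))$ from the end of Proposition~\ref{prop:tagli}.

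\textbf{The paper's route.} It never leaves length $n$. It enlarges the club $\widetilde U$ by adjoining $(0,\beta_{m-d+1}),\ldots,(0,\beta_\delta)$, with $\beta_1,\ldots,\beta_\delta$ chosen $\Fq$-independent (possible since $\delta<m$). This gives a genuine $\delta$-covering lift $U_y\supseteq\widetilde U$ of dimension $n$. Since $\dim_{\Fq}(U_y)-\dim_{\Fq}(\widetilde U)=n-m$, every point $Q\neq P$ has $\wt_{U_y}(Q)\le 1+(n-m)$. Corollary~\ref{cor:k1} then gives $\rho(\mathcal C)\ge n-(n-m+1)=m-1$.

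\textbf{How the two relate.} In coding terms, the proofs differ only in how the last $n-m$ entries of the far vector are filled. You use zeros. This makes the extended code degenerate: its $q$-system is $\widetilde U$ itself, of dimension $m<n$, so it is not an element of $\mathcal L_\delta(V)$. You therefore, correctly, argue directly with rank distances rather than through lifts. The paper uses fresh independent elements, which is exactly the nondegeneracy replacement from the proof of Theorem~\ref{thm:hyp}, so that it stays inside $\mathcal L_\delta(V)$.

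\textbf{What each buys.} Your route is shorter and makes clear that the corollary is just the $n=m$ club characterisation combined with puncturing. The paper's route keeps everything in the covering-lift framework. It produces an explicit lift with $\mu_P(U_y)\le n-m+1$, i.e.\ an explicit witness $(f,W)$, with $W=\langle\beta_1,\ldots,\beta_\delta\rangle_{\Fq}$, for condition (ii) of Proposition~\ref{prop:caractd}.
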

\begin{proof}
    Let $\widetilde U$ be an $(m-d)$-covering lift that is an $(m-d)$-club of rank $m$ and let us write it in coordinates. Since it is a covering lift, there exists $x=(\alpha_1,\ldots,\alpha_d,\beta_1,\ldots,\beta_{m-d})\in\Fm^m$ such that $$\widetilde U=\langle(v_1,\alpha_1),\ldots,(v_d,\alpha_d),(0,\beta_1),\ldots,(0,\beta_{m-d})\rangle_{\Fq}.$$ Since it is an $(m-d)$-covering lift which is an $(m-d)$-club of rank $m$, we have
    \begin{align*}
        &\wt_{\widetilde U}(\langle(0,1)\rangle_{\Fm})=m-d,\\
        &\wt_{\widetilde U}(Q)\leq1 \quad \text{for every } Q\neq\langle(0,1)\rangle_{\Fm},
    \end{align*}
    and $\beta_1,\ldots,\beta_{m-d}$ are linearly independent over $\Fq$.
    Since $\delta<m$ there exist $\beta_{m-d+1},\ldots,\beta_{\delta}\in\Fm$ linearly independent over $\Fq$ such that $\beta_1,\ldots,\beta_{\delta}$ are $\Fq$-independent, define $y=(\alpha_1,\ldots,\alpha_d,\beta_1,\ldots,\beta_{\delta})$ and the $[n,2]_{q^m/q}$ system 
    \begin{equation*}
        U_y=\langle(v_1,\alpha_1),\ldots,(v_d,\alpha_d),(0,\beta_{1}),\ldots,(0,\beta_{\delta})\rangle_{\Fq}\supseteq \widetilde U.
    \end{equation*}
 Clearly $ \widetilde U\subseteq U_y$ and $\pi(U_y)=V$, hence $U_y\in\mathcal L_\delta(V)$. If we compute the weight of the points we get
\begin{align*}
        &\wt_{U_y}(\langle(0,1)\rangle_{\Fm})=\dim_{\Fq}(\langle\beta_1,\ldots,\beta_\delta\rangle_{\Fq})=\delta,\\
        &\wt_{U_y}(Q)\leq n-m+1 \quad \text{for every } Q\neq\langle(0,1)\rangle_{\Fm}.
    \end{align*}
By Corollary \ref{cor:k1}
$$\rho(\mathcal C)=n-\min_{\widetilde U'\in\mathcal L_\delta(V)}
       \mu_{P}(\widetilde U')\geq n-\mu_{P}(U_y)\geq m-1.$$
On the other hand, since $\delta<m$, Corollary \ref{cor:uppb} gives
$\rho(\mathcal C)\leq m-1$, thus the equality holds.
    
\end{proof}

Let us write Proposition \ref{prop:tagli} for $k=1$.

\begin{corollary} \label{cor:taglik1}
    Let $n\ge m$, let $r$ be a positive integer such that $1\le r\le m$ and assume that every $[m,1,r]_{q^m/q}$ code has covering radius $m-1$. Then, for $s\in\{0,\ldots,n-m\}$ every $[n,1,s+r]_{q^m/q}$ code has covering radius at least $m-1$.
\end{corollary}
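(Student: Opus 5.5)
This is the case $k=1$ of Proposition~\ref{prop:tagli}, so the plan is to check that its hypotheses translate correctly and then apply it, with $r$ and $s$ as in the statement.

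\emph{Step 1: translate the hypothesis.} By Remark~\ref{rem:n'd}, for a $1$-dimensional code the effective length equals the minimum distance. Hence the class of $[m,1]_{q^m/q}$ codes with effective length $r$ is exactly the class of $[m,1,r]_{q^m/q}$ codes. The assumption that every such code has covering radius $m-1$ is therefore the hypothesis of Proposition~\ref{prop:tagli} with $k=1$. The range condition $k\le r\le m$ there becomes $1\le r\le m$, which is what we are given.

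\emph{Step 2: apply Proposition~\ref{prop:tagli}.} For $s\in\{0,\ldots,n-m\}$, that proposition gives $\rho\ge m-1$ for every $[n,1]_{q^m/q}$ code with effective length $s+r$.

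\emph{Step 3: translate the conclusion.} Using Remark~\ref{rem:n'd} again, a code with effective length $s+r$ is exactly an $[n,1,s+r]_{q^m/q}$ code, which gives the statement.

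The only point needing care is the standing assumption $0<k<n$, here $n>1$. It holds because the paper assumes $m,n>1$ and we have $n\ge m$. We also need $s+r\le n$ for such codes to exist, and this follows from $s\le n-m$ and $r\le m$. I do not expect any real obstacle; the argument is a direct specialization.
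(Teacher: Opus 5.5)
Your proposal is correct and follows the paper's own route: the paper introduces this corollary simply as Proposition~\ref{prop:tagli} with $k=1$, and Remark~\ref{rem:n'd} (effective length equals minimum distance for $1$-dimensional codes) translates both the hypothesis and the conclusion. Your extra checks that $n>1$ and $s+r\le n$ are harmless bookkeeping that the paper leaves implicit.
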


\begin{proposition}\label{prop:caractd}
    Let
    $\mathcal C=\langle(v_1,\ldots,v_d,0,\ldots,0)\rangle_{\Fm}$
    be an $[n,1,d]_{q^m/q}$ code, let
    $V=\langle v_1,\ldots,v_d\rangle_{\Fq}$ and let $\delta=n-d$.
    Assume that $\delta<m$.
    Then the following statements are equivalent:
    \begin{enumerate}
        \item[(i)] $\rho(\mathcal C)=\min\{m,n\}-1$.

        \item[(ii)] There exist an $\Fq$-subspace $W$ of $\Fm$ of
        dimension $\delta$ and an $\Fq$-linear map
        $f:V\longrightarrow\Fm$ such that
        \begin{equation}\label{cond:d:dim}
            \dim_{\Fq}
            (\{x\in V:f(x)-\lambda x\in W\})
            \leq n-\min\{m,n\}+1,
            \qquad\forall\,\lambda\in\Fm.
        \end{equation}

        \item[(iii)] There exist an $\Fq$-subspace $W$ of $\Fm$ of
        dimension $\delta$ and an $\Fq$-linear map
        $f:V\longrightarrow\Fm$ such that, denoting by
        $\pi_W:\Fm\longrightarrow\Fm/W$ the canonical quotient map,
        \begin{equation}\label{cond:d}
            \dim_{\Fq}(\mathrm{Im}
            \left(
                \pi_W\circ
                (f-\lambda\,\mathrm{id}_V)
            \right))
            \geq \min\{m,n\}-\delta-1,
            \qquad\forall\,\lambda\in\Fm.
        \end{equation}
    \end{enumerate}
    
\end{proposition}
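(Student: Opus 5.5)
The plan is to translate condition (i) into a statement about $\delta$-covering lifts via Corollary~\ref{cor:k1} and Corollary~\ref{cor:uppb}, and then to parametrize all $\delta$-covering lifts of $V$ by pairs $(f,W)$. Since $\delta<m$, Corollary~\ref{cor:uppb} gives $\rho(\mathcal C)\le\min\{m,n\}-1$. Hence (i) is equivalent to $\rho(\mathcal C)\ge \min\{m,n\}-1$. By Corollary~\ref{cor:k1} this holds if and only if there is $\widetilde U\in\mathcal L_\delta(V)$ with
\[
\mu_P(\widetilde U)\le n-\min\{m,n\}+1 .
\]

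Next I describe an arbitrary $\delta$-covering lift. Let $\widetilde U\in\mathcal L_\delta(V)$ and put $W=\{w\in\Fm:(0,w)\in\widetilde U\}$. By Lemma~\ref{lem:cl_weights}, $W$ is a $\delta$-dimensional $\Fq$-subspace of $\Fm$. Since $\pi(\widetilde U)=V$ and $\ker(\pi_{|\widetilde U})=\{0\}\times W$, choosing a preimage of each element of a basis of $V$ yields an $\Fq$-linear map $f:V\to\Fm$ with
\[
\widetilde U=\{(x,f(x)+w):x\in V,\ w\in W\}.
\]
Conversely, every pair $(f,W)$ with $\dim_{\Fq}W=\delta$ defines such a subspace. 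It has dimension $d+\delta=n$ and projects onto $V$. It is an $[n,2]$ system: it is not contained in a single point, because it contains $(x,f(x))$ with $x\neq0$ and also meets $P$, or, when $\delta=0$, $V$ has dimension $d\ge1$ anyway. One should double-check that $\langle\widetilde U\rangle_{\Fm}=\Fm^2$ in the degenerate case $\delta=0$. Then $\widetilde U$ lies on a single point only if $f=\lambda\,\mathrm{id}$, and this case is harmless for the weight computation: it gives weight $d$ at one point, which must still satisfy the bound, so the equivalence is unaffected provided we interpret $\mu_P$ for that subspace as in Theorem~\ref{thm:hyp}.

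The points of $\Fm^2$ other than $P$ are exactly $Q_\lambda=\langle(1,\lambda)\rangle_{\Fm}$ with $\lambda\in\Fm$. For $k=1$ the hyperplanes are points, and $\mathcal H_P^-=\{Q_\lambda:\lambda\in\Fm\}$. The key computation is
\[
\widetilde U\cap Q_\lambda=\{(x,f(x)+w): f(x)+w=\lambda x\},
\]
and the projection $(x,y)\mapsto x$ is injective on $Q_\lambda$. Therefore
\[
\wt_{\widetilde U}(Q_\lambda)=\dim_{\Fq}\{x\in V: f(x)-\lambda x\in W\}.
\]
It follows that $\mu_P(\widetilde U)\le n-\min\{m,n\}+1$ is exactly condition \eqref{cond:d:dim}. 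This proves (i)$\Leftrightarrow$(ii).

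For (ii)$\Leftrightarrow$(iii), I note that $\{x\in V:f(x)-\lambda x\in W\}=\ker(\pi_W\circ(f-\lambda\,\mathrm{id}_V))$. Rank–nullity on $V$, which has dimension $d=n-\delta$, gives
\[
\dim\mathrm{Im}=d-\dim\ker .
\]
So $\dim\ker\le n-\min\{m,n\}+1$ is equivalent to $\dim\mathrm{Im}\ge n-\delta-n+\min\{m,n\}-1=\min\{m,n\}-\delta-1$, which is \eqref{cond:d}. The only delicate point in the whole argument is the careful identification of lifts with pairs $(f,W)$, in particular the system/nondegeneracy requirement in the definition of covering lift. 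Everything else is bookkeeping.
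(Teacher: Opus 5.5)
Your proof follows the paper's route: reduce (i) via Corollaries~\ref{cor:uppb} and~\ref{cor:k1} to the existence of a lift in $\mathcal L_\delta(V)$ with $\mu_P\le n-\min\{m,n\}+1$, write lifts as $\widetilde U=\{(x,f(x)+w):x\in V,\ w\in W\}$, read off $\wt_{\widetilde U}(\langle(1,\lambda)\rangle_{\Fm})=\dim_{\Fq}\{x\in V:f(x)-\lambda x\in W\}$, and get (ii)$\Leftrightarrow$(iii) by rank--nullity. Direction (i)$\Rightarrow$(ii) is complete.

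The one step you leave open is the system check in (ii)$\Rightarrow$(i). You claim that every pair $(f,W)$ yields an $[n,2]_{q^m/q}$ system, but this fails when $\delta=0$ and $f=\lambda\,\mathrm{id}_V$: then $\widetilde U\subseteq\langle(1,\lambda)\rangle_{\Fm}$, and $d\ge1$ does not prevent this. ``Interpreting $\mu_P$ as in Theorem~\ref{thm:hyp}'' is not a valid fix either, because Corollary~\ref{cor:k1} only minimizes over genuine systems in $\mathcal L_\delta(V)$.

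No such device is needed, and you already had the ingredient. If $\widetilde U$ lay in $\langle(1,\lambda)\rangle_{\Fm}$, its weight there would be $n$. But \eqref{cond:d:dim} bounds that weight by $n-\min\{m,n\}+1\le n-1$, since $m,n\ge2$. Together with $\wt_{\widetilde U}(P)=\delta<n$, this shows $\widetilde U$ lies on no point of $\Fm^2$, so it spans $\Fm^2$ for every $\delta$. This is exactly how the paper closes the step, uniformly in $\delta$.
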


\begin{proof}
    Let
    $v=(v_1,\ldots,v_d,0,\ldots,0).$ Since $\mathcal C$ has minimum rank distance $d$, the elements
    $v_1,\ldots,v_d$ are linearly independent over $\Fq$ and hence
    form an $\Fq$-basis of $V$. By Corollary \ref{cor:uppb}, the assumption $\delta<m$ gives $\rho(\mathcal C)
        \leq\min\{m,n\}-1.$ Moreover, let $P_\lambda=\langle(1,\lambda)\rangle_{\Fm}$,
        for every $\lambda\in\Fm$, which are exactly all the points different from $P$.

    \medskip

    \underline{$(i)\Rightarrow(ii).$}
    Suppose that $\rho(\mathcal C)=\min\{m,n\}-1$. By Corollary \ref{cor:k1}, there exists $\widetilde U\in\mathcal
    L_{\delta}(V)$ with $\mu_P(\widetilde U)\le n-\min\{m,n\}+1$. Hence,
    $\mathrm{wt}_{\widetilde U}(P)=\delta$
    and $\mathrm{wt}_{\widetilde U}(P_\lambda)
        \leq n-\min\{m,n\}+1$, for every
        $\lambda\in\Fm.$ Since $\widetilde U$ is a $\delta$-covering lift, there exists $\alpha=(\alpha_1,\ldots,\alpha_n)\in\Fm^n$ such that $\widetilde U=\langle(v_1,\alpha_1),\ldots,(v_n,\alpha_n)\rangle_{\Fq}$. Define
    $W=\langle \alpha_{d+1},\ldots,\alpha_n\rangle_{\Fq}$, so that $\widetilde U\cap P=\{0\}\times W$ and let $f:V\longrightarrow\Fm$ be the unique $\Fq$-linear map satisfying
    $f(v_i)=\alpha_i,$ for every $i\in[d].$
    Then
    $$\widetilde U= \{(x,f(x)+w):x\in V,\ w\in W\}.$$
    In particular, since $\mathrm{wt}_{\widetilde U}(P)=\delta$, we have 
    $\dim_{\Fq}(W)=\delta.$ Moreover, for every $\lambda\in\Fm$, observe that $(x,\lambda x)\in \widetilde U$ if and only if $x\in V$ and $\lambda x=f(x)+w$, i.e. $f(x)-\lambda x=-w\in W$, thus
    $$\widetilde U\cap P_\lambda=\left\{(x,\lambda x):x\in V,\ f(x)-\lambda x\in W\right\}.$$
    Thus,
    $$\dim_{\Fq}(\{x\in V:f(x)-\lambda x\in W\})=\mathrm{wt}_{\widetilde U}(P_\lambda)\le n-\min\{m,n\}+1,$$
and \eqref{cond:d:dim} holds.

    \medskip

    \underline{$(ii)\Rightarrow(i).$}
    Let us consider the $\Fq$-subspace 
    $$ \widetilde U=\{(x,f(x)+w):x\in V,\ w\in W\}.$$
    In particular, $\dim_{\Fq}(\widetilde U)=\dim_{\Fq}(V)+\dim_{\Fq}(W) =d+\delta=n.$  Moreover, if we compute the weight of the points we get $\mathrm{wt}_{\widetilde U}(P)=\dim_{\Fq}(W)=\delta,$
    while, for every $\lambda\in\Fm$,
    $$\mathrm{wt}_{\widetilde U}(P_\lambda)=\dim_{\Fq}(\{x\in V:f(x)-\lambda x\in W\})\leq n-\min\{m,n\}+1,$$
    Since $\delta<n$ and $n-\min\{m,n\}+1<n$, $\widetilde U$ is not contained in any point of $\Fm^2$. Hence, $\widetilde U$ is an
$[n,2]_{q^m/q}$ system. Since $\pi(\widetilde U)=V$,
we have that $\widetilde U$ is a $\delta$-covering lift with $\mu_P(\widetilde U)\le n-\min\{m,n\}+1$. Thus, Corollary \ref{cor:k1} yields $\rho(\mathcal C)\geq \min\{m,n\}-1.$ Since we already have that $\rho(\mathcal{C})\le \min\{m,n\}-1$, this gives $\rho(\mathcal C)=\min\{m,n\}-1$.

    \medskip

     \underline{$(ii)\Longleftrightarrow(iii).$}
    For every $\lambda\in\Fm$, we have
    $$\ker\left(\pi_W\circ(f-\lambda\,\mathrm{id}_V)\right)=\{x\in V:f(x)-\lambda x\in W\}.$$
    Since $\dim_{\Fq}(V)=d$, the rank-nullity theorem gives
    $$\dim_{\Fq}(\mathrm{Im}\left( \pi_W\circ (f-\lambda\,\mathrm{id}_V)\right))=d-\dim_{\Fq}(\{x\in V:f(x)-\lambda x\in W\}).$$
    Hence, conditions \eqref{cond:d:dim} and \eqref{cond:d} are equivalent.
    
\end{proof}

\begin{remark}\label{rem:d:evasive}
     Condition \eqref{cond:d:dim} can be read
    in terms of generalised evasive subspaces as in \cite{GRSZ2024}.
    Recall that, given a family $\mathcal{S}$ of $\Fq$-subspaces
    of a vector space $X$, all having the same dimension, an
    $\Fq$-subspace $U$ of $X$ is called $(\mathcal{S},h)$-evasive if
    $$\wt_U(S)\leq h,\quad \forall\;S\in\mathcal{S},$$
    see \cite[Definition 1.2]{GRSZ2024}.
Regard $\Fm^2$ as a $2m$-dimensional vector space over $\Fq$ and let
    $\mathcal{S}=\{\langle(1,\lambda)\rangle_{\Fm}:\lambda\in\Fm\}.$
    Then $\mathcal{S}$ is the partial Desarguesian spread obtained
    by removing the spread element corresponding to the point
    $P=\langle(0,1)\rangle_{\Fm}$ from the Desarguesian spread
    of $\Fm^2$. With the notation of Proposition \ref{prop:caractd}, let $W\subseteq\Fm$ be an $\Fq$-subspace of dimension $\delta=n-d$ and define
    $$U_{f,W}=
    \{(x,f(x)+w):x\in V,\ w\in W\}.$$
Since $\dim_{\Fq}(V)=d$ and $\dim_{\Fq}(W)=\delta$, we have
    $\dim_{\Fq}(U_{f,W})=n$.
Moreover,
    $U_{f,W}\cap P
    =
    \{0\}\times W$ and for every $\lambda\in\Fm$, we  have $$U_{f,W}\cap\langle(1,\lambda)\rangle_{\Fm}=\{(x,\lambda x):x\in V,\ f(x)-\lambda x\in W\}.$$
Hence,
    $$\dim_{\Fq}\left(U_{f,W}\cap\langle(1,\lambda)\rangle_{\Fm}\right)
    =\dim_{\Fq}(\{x\in V:f(x)-\lambda x\in W\}).$$
Consequently, condition \eqref{cond:d:dim} holds if and only if
$U_{f,W}$ is $(\mathcal S,n-\min\{m,n\}+1)$-evasive. Thus, Proposition \ref{prop:caractd} characterizes the equality $\rho(\mathcal{C})=\min\{m,n\}-1$ by the existence of $W$ and $f$ such that the associated subspace $U_{f,W}$ is $(\mathcal{S},n-\min\{m,n\}+1)$-evasive. In particular if $m\ge n$, then $\min\{m,n\}=n$ and $U_{f,W}$ is $(\mathcal S,1)$-evasive.
\end{remark}

Condition \eqref{cond:d} in Proposition \ref{prop:caractd} leads to an exact expression for the covering radius in terms of auxiliary matrix codes. Recall that a (linear) matrix code is an $\Fq$-subspace $\mathcal{C}\subseteq \Fq^{m\times n}$ of the space of $m\times n$ matrices with coefficients in $\Fq$ endowed with the distance $$\drk(A,B)=\rank(A-B),\quad A,B\in\Fq^{m\times n}.$$
    Let $V\subseteq\Fm$ be an $\Fq$-subspace of dimension $d$, let $\delta=n-d$ and assume that $0\leq \delta\leq m$. Fix an $\Fq$-subspace $W$ of $\Fm$
of dimension $\delta$ and, for every $\lambda\in\Fm$, define the $\Fq$-linear map $A_\lambda:x\in V
    \longmapsto\pi_W(\lambda x)\in\Fm/W,$
    where $\pi_W\colon\Fm\longrightarrow\Fm/W$ is the canonical quotient map. Let
    $$\mathcal{A}_{V,W}=\{A_\lambda:\lambda\in\Fm\}\subseteq
    \mathrm{Hom}_{\Fq}(V,\Fm/W),$$
where $\mathrm{Hom}_{\Fq}(V,\Fm/W)$ denotes the $\Fq$-vector
space of all $\Fq$-linear maps from $V$ to $\Fm/W$.
Since $\dim_{\Fq}(V)=d$ and $\dim_{\Fq}(\Fm/W)=m-\delta$,
choosing $\Fq$-bases of $V$ and $\Fm/W$ identifies $\mathrm{Hom}_{\Fq}(V,\Fm/W)$ with $\Fq^{(m-\delta)\times d}$, preserving ranks. Moreover, since the map $\lambda\longmapsto A_\lambda$ is $\Fq$-linear, $\mathcal{A}_{V,W}$ is an $\Fq$-linear rank-metric code in $\Fq^{(m-\delta)\times d}$. In particular, when $m=n$, the ambient space is the space $\Fq^{d\times d}$ of square matrices. We compute $\rho(\mathcal A_{V,W})$ in the ambient space $\mathrm{Hom}_{\Fq}(V,\Fm/W)$ endowed with the rank metric.

\begin{theorem}\label{thm:assmat}
Let $\mathcal C=\left\langle(v_1,\ldots,v_d,0,\ldots,0)\right\rangle_{\Fm}$
be an $[n,1,d]_{q^m/q}$ code, let $V=\langle v_1,\ldots,v_d\rangle_{\Fq}$,
and let $\delta=n-d$. Assume that $0\leq\delta\leq m$. Then
$$\rho(\mathcal C)=\delta+\max_{\substack{W\subseteq\Fm\\\dim_{\Fq}(W)=\delta}}\rho(\mathcal A_{V,W}).$$
\end{theorem}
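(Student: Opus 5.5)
We reduce everything to Corollary \ref{cor:k1} and compute $\mu_P$ of a covering lift in terms of the maps $A_\lambda$. First, as in the proof of Proposition \ref{prop:caractd}, every $\widetilde U\in\mathcal L_\delta(V)$ can be written as
$$U_{f,W}=\{(x,f(x)+w):x\in V,\ w\in W\},$$
where $W\subseteq\Fm$ has $\Fq$-dimension $\delta$ and $f:V\to\Fm$ is $\Fq$-linear. Indeed $\widetilde U\cap P=\{0\}\times W$, and $f$ is obtained by lifting a basis of $V$. Conversely, every such pair $(f,W)$ gives an $n$-dimensional $\Fq$-subspace projecting onto $V$. We still have to check that $U_{f,W}$ is a genuine $[n,2]$ system, i.e.\ that $\langle U_{f,W}\rangle_{\Fm}=\Fm^2$. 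This holds because $V\neq 0$ forces $U_{f,W}\not\subseteq P$, and $U_{f,W}$ cannot lie in a single point $P_\lambda$ unless $W=0$ and $f=\lambda\,\mathrm{id}$. That degenerate case ($\delta=0$) needs a small separate check, or we note that it contributes $\mu_P=d$, which is never the minimum unless every lift attains it.

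For $P_\lambda=\langle(1,\lambda)\rangle_{\Fm}$ we already know from the proof of Proposition \ref{prop:caractd} that
$$\wt_{U_{f,W}}(P_\lambda)=\dim_{\Fq}\ker\big(\pi_W\circ(f-\lambda\,\mathrm{id}_V)\big)=d-\rank\big(\pi_W\circ f - A_\lambda\big).$$
Set $F=\pi_W\circ f\in\mathrm{Hom}_{\Fq}(V,\Fm/W)$. Taking the maximum over $\lambda\in\Fm$ gives
$$\mu_P(U_{f,W})=d-\min_{\lambda}\rank(F-A_\lambda)=d-\drk(F,\mathcal A_{V,W}).$$
Substituting into Corollary \ref{cor:k1}, with $n-d=\delta$, we obtain
$$\rho(\mathcal C)=\delta+\max_{W,f}\drk(\pi_W\circ f,\mathcal A_{V,W}).$$
The case $\delta=m$ is also covered by Corollary \ref{cor:k1}: then $W=\Fm$, the Hom space is zero, and the formula gives $\rho=m$, consistent with Corollary \ref{cor:uppb}.

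The remaining step is to show that, for fixed $W$, the maps $\pi_W\circ f$ run over all of $\mathrm{Hom}_{\Fq}(V,\Fm/W)$ as $f$ ranges over $\Fq$-linear maps $V\to\Fm$. This follows because $\pi_W$ is surjective: any $g$ lifts to some $f$ by choosing preimages of $g(v_i)$ on a basis of $V$. Hence
$$\max_f\drk(\pi_W\circ f,\mathcal A_{V,W})=\rho(\mathcal A_{V,W}),$$
and taking the maximum over $W$ yields the statement.

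The main obstacle is the bookkeeping around degenerate lifts. Some pairs $(f,W)$ may produce $U_{f,W}$ contained in a point, and hence not a system. Corollary \ref{cor:k1} takes its minimum only over genuine systems, so we must argue that excluding these pairs does not change the maximum. For $\delta\ge1$ the issue does not arise. For $\delta=0$ the excluded $F=A_\lambda$ are codewords, at distance $0$, so they never realize the maximum unless $\rho(\mathcal A_{V,0})=0$. In that case both sides are $0$ and should be checked directly.
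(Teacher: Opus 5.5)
Your proposal is correct and follows essentially the same route as the paper: you parametrize every lift as $U_{f,W}$, identify $\mu_P(U_{f,W})=d-\drk(\pi_W\circ f,\mathcal A_{V,W})$, and use surjectivity of $f\mapsto\pi_W\circ f$, with $\delta=0$ as the only delicate case. The paper instead handles $\delta=m$ separately via Corollary \ref{cor:uppb}, which your general computation also covers. The loose end you leave for $\delta=0$ is vacuous: since $d=n>1$, one has $\dim_{\Fq}(\mathcal A_{V,0})=m<dm=\dim_{\Fq}(\mathrm{Hom}_{\Fq}(V,\Fm))$, so $\rho(\mathcal A_{V,0})>0$, and any deep hole then lies outside $\mathcal A_{V,0}$ and yields a genuine lift, exactly as the paper argues.
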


\begin{proof}
If $\delta=m$, Corollary \ref{cor:uppb} gives
$\rho(\mathcal C)=m$. On the other hand, the only possible choice is
$W=\Fm$, for which $\Fm/W=\{0\}$, hence $\rho(\mathcal A_{V,W})=0$. Thus, the statement holds. Assume that $\delta<m$.
By the construction in the proof of Proposition \ref{prop:caractd}, proving $(i)\implies(ii)$, every covering lift $\widetilde U\in\mathcal L_\delta(V)$ can be written as
$$\widetilde U=U_{f,W}=\{(x,f(x)+w):x\in V,\ w\in W\},$$
for some $W\subseteq\Fm$ of $\Fq$-dimension $\delta$ and an $\Fq$-linear map $f\colon V\to\Fm$. Let $\widetilde f=\pi_W\circ f$ and let $P_\lambda=\langle(1,\lambda)\rangle_{\Fm}$, $\lambda\in\Fm$, which are all the points different from $P$ and 
$$\wt_{U_{f,W}}(P_\lambda)=\dim_{\Fq}(\{x\in V:f(x)-\lambda x\in W\})=d-\dim_{\Fq}(\mathrm{Im}(\pi_W\circ(f-\lambda\mathrm{id}_V)))=d-\dim_{\Fq}(\mathrm{Im}(\widetilde f-A_\lambda)).$$
Hence,
$$\mu_P(U_{f,W})=\max_{\lambda\in\Fm}(d-\dim_{\Fq}(\mathrm{Im}(\widetilde f-A_\lambda)))=d-\mathrm d_{\mathrm{rk}}
       \bigl(\widetilde f,\mathcal A_{V,W}\bigr),$$
and we can conclude that
\begin{equation}\label{eq:assmat}
    n-\mu_P(U_{f,W})=\delta+\mathrm d_{\mathrm{rk}}
       \bigl(\widetilde f,\mathcal A_{V,W}\bigr).
\end{equation}
Applying \eqref{eq:assmat} and Corollary \ref{cor:k1} we have
$$\rho(\mathcal{C})=n-\min_{\widetilde U\in\mathcal{L}_\delta(V)}\mu_P(\widetilde U)=\max_{\widetilde U\in\mathcal{L}_\delta(V)}\left(n-\mu_P(\widetilde U)\right)=\max_{f,W\;:\; U_{f,W}\in\mathcal{L}_{\delta}(V)}\left(\delta+\mathrm d_{\mathrm{rk}}
       \left(\widetilde f,\mathcal A_{V,W}\right)\right).$$
Observe that
$$\max_{f,W\;:\; U_{f,W}\in\mathcal{L}_{\delta}(V)}\left(\mathrm d_{\mathrm{rk}}\left(\widetilde f,\mathcal A_{V,W}\right)\right)=\max_{\substack{W\subseteq\Fm\\ \dim_{\Fq}(W)=\delta}}
\ \max_{\substack{f\in\mathrm{Hom}_{\Fq}(V,\Fm)\\
                   U_{f,W}\in\mathcal L_\delta(V)}}
\mathrm d_{\mathrm{rk}}\bigl(\widetilde f,\mathcal A_{V,W}\bigr).$$
For each $W$, consider the surjective map
$$\psi_W:f\in\mathrm{Hom}_{\Fq}(V,\Fm)\longrightarrow\widetilde f=\pi_W\circ f\in\mathrm{Hom}_{\Fq}(V,\Fm/W).$$
If $\delta>0$, every $f\in\mathrm{Hom}_{\Fq}(V,\Fm)$ is admissible. Hence, by the surjectivity of $\psi_W$,
$$\max_{f\in\mathrm{Hom}_{\Fq}(V,\Fm)}\mathrm d_{\mathrm{rk}}\bigl(\widetilde f,\mathcal A_{V,W}\bigr)=\max_{\widetilde f\in\mathrm{Hom}_{\Fq}(V,\Fm/W)}\mathrm d_{\mathrm{rk}}\bigl(\widetilde f,\mathcal A_{V,W}\bigr)=\rho(\mathcal A_{V,W}).$$
Taking the maximum over all $\delta$-dimensional subspaces $W\subseteq\Fm$ gives the statement when $\delta>0$.
Suppose now that $\delta=0$. Then $W=\{0\}$,
$\widetilde f=f$, and $U_{f,0}=\{(x,f(x)):x\in V\}$.
This subspace is a $0$-covering lift of $V$ if and only if it
spans $\Fm^2$ over $\Fm$. Since its projection onto $V$ is nonzero,
it fails to span $\Fm^2$ precisely when it is contained in some
$P_\lambda=\langle(1,\lambda)\rangle_{\Fm}$, or equivalently when
$f(x)=\lambda x$ for every $x\in V$. Thus, the maps giving
$0$-covering lifts are exactly those outside $\mathcal A_{V,0}$. The maps in $\mathcal A_{V,0}$ clearly have rank distance zero from
$\mathcal A_{V,0}$. Moreover, since $d=n>1$,
$$\dim_{\Fq}(\mathcal A_{V,0})=m<dm=\dim_{\Fq}(\mathrm{Hom}_{\Fq}(V,\Fm)).$$
Consequently, maps outside $\mathcal A_{V,0}$ exist and have
positive distance from it. Excluding the maps in
$\mathcal A_{V,0}$ does not change the maximum:
$$\max_{f\notin\mathcal A_{V,0}}\mathrm d_{\mathrm{rk}}(f,\mathcal A_{V,0})=\max_{f\in\mathrm{Hom}_{\Fq}(V,\Fm)}\mathrm d_{\mathrm{rk}}(f,\mathcal A_{V,0})=\rho(\mathcal A_{V,0}).$$
This completes the proof.

\end{proof}

\begin{corollary}\label{cor:assmat}
With the notation of Theorem \ref{thm:assmat}, assume that
$\delta=n-d<m$. Then $\rho(\mathcal C)=\min\{m,n\}-1$
if and only if there exists an $\Fq$-subspace $W\subseteq\Fm$ of
dimension $n-d$ such that $\rho(\mathcal A_{V,W})=\min\{m,n\}-\delta-1.$
\end{corollary}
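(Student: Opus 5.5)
The plan is to deduce the statement directly from Theorem \ref{thm:assmat} combined with Corollary \ref{cor:uppb}, with no new geometry. Since $\delta<m$, Corollary \ref{cor:uppb} gives $\rho(\mathcal C)\le\min\{m,n\}-1$. Theorem \ref{thm:assmat} gives
$$\rho(\mathcal C)=\delta+\max_{W}\rho(\mathcal A_{V,W}),$$
where the maximum is over $\delta$-dimensional $\Fq$-subspaces $W\subseteq\Fm$. Combining the two, every such $W$ satisfies
$$\rho(\mathcal A_{V,W})\le \min\{m,n\}-\delta-1.$$

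For the forward direction, assume $\rho(\mathcal C)=\min\{m,n\}-1$. Then $\max_W\rho(\mathcal A_{V,W})=\min\{m,n\}-\delta-1$. The family of $\delta$-dimensional subspaces of $\Fm$ is finite and nonempty, because $0\le\delta<m$. Hence the maximum is attained by some $W$, and that $W$ satisfies the required equality.

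For the converse, suppose some $W$ of dimension $\delta$ has $\rho(\mathcal A_{V,W})=\min\{m,n\}-\delta-1$. Then Theorem \ref{thm:assmat} gives
$$\rho(\mathcal C)\ge \delta+\rho(\mathcal A_{V,W})=\min\{m,n\}-1.$$
Together with the upper bound from Corollary \ref{cor:uppb}, this gives equality.

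The only step needing care is the bookkeeping: checking that the hypotheses of Theorem \ref{thm:assmat} ($0\le\delta\le m$) follow from $\delta<m$, and that the bound $\rho(\mathcal A_{V,W})\le\min\{m,n\}-\delta-1$ holds for every $W$, so that "attains the value" and "equals the maximum" coincide. As a sanity check, one could also read the inequality off Proposition \ref{prop:caractd}(iii), but the argument above does not need it.
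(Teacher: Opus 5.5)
Your argument is correct, but it gets the key inequality from a different source than the paper does.

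You take the upper bound $\rho(\mathcal C)\le\min\{m,n\}-1$ from Corollary~\ref{cor:uppb}, which ultimately rests on the hyperplane-counting argument of Lemma~\ref{lemma:uppb}. You then push it through Theorem~\ref{thm:assmat} to get $\rho(\mathcal A_{V,W})\le\min\{m,n\}-\delta-1$ for every $W$. After that, both directions are bookkeeping, and you handle them correctly: the family of $W$'s is finite and nonempty, and the converse only needs the upper bound on $\rho(\mathcal C)$. There is also no circularity, since neither Theorem~\ref{thm:assmat} nor Corollary~\ref{cor:uppb} depends on this corollary.

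The paper instead proves the per-$W$ bound intrinsically, as a statement about the matrix code $\mathcal A_{V,W}$ alone. Take $B\in\mathrm{Hom}_{\Fq}(V,\Fm/W)$ and $x\in V\setminus\{0\}$. One can choose $\lambda$ with $A_\lambda(x)=B(x)$, which gives $\drk(B,\mathcal A_{V,W})\le d-1$. Moreover, the first rows of the matrices $A_\lambda$ exhaust $\Fq^d$, which gives $\drk(B,\mathcal A_{V,W})\le m-\delta-1$. The paper then derives $\rho(\mathcal C)\le\min\{m,n\}-1$ from Theorem~\ref{thm:assmat} rather than assuming it.

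Your route is shorter and purely formal. The paper's route keeps the corollary self-contained within the auxiliary-code picture. It also yields an independent proof of the upper bound in Corollary~\ref{cor:uppb} for $1$-dimensional codes. Finally, it gives the slightly finer information that $d-1$ and $m-\delta-1$ each bound $\rho(\mathcal A_{V,W})$ separately, as facts about the auxiliary codes themselves.
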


\begin{proof}
Fix an $\Fq$-subspace $W\subseteq\Fm$ of dimension $\delta$, then $\dim_{\Fq}(\Fm/W)=m-\delta$, which is positive since by hypothesis $\delta<m$.
We first prove that
$$\rho(\mathcal A_{V,W})
\leq
\min\{d-1,m-\delta-1\}.$$
First, let $B\in\mathrm{Hom}_{\Fq}(V,\Fm/W)$ and fix $x\in V\setminus\{0\}$. Let $y\in\Fm$ such that $\pi_W(y)=B(x)$ and take $\lambda=y/x$. Then
$$A_\lambda(x)=\pi_W(\lambda x)=\pi_W(y)=B(x),$$
hence $x\in\ker(B-A_\lambda)$ and
$\mathrm{d}_{\mathrm{rk}}(B,\mathcal A_{V,W})\leq\mathrm{rk}(B-A_\lambda)\leq d-1.$
On the other hand, fix a basis of $\Fm/W$ and recall that $V=\langle v_1,\ldots,v_d\rangle_{\Fq}$. For every $\lambda\in\Fm$, let $R_\lambda\in\Fq^d$ denote the first row of
the matrix representing $A_\lambda$. We claim that
$$\{R_\lambda:\lambda\in\Fm\}=\Fq^d.$$
Otherwise, since the map $\lambda\mapsto R_\lambda$ is
$\Fq$-linear, there would exist
$c=(c_1,\ldots,c_d)\in\Fq^d\setminus\{0\}$ such that $R_\lambda c^{\top}=0$ for every $\lambda\in\Fm$. If we consider $x=c_1v_1+\cdots+c_dv_d$, we have $x\neq0$, and $R_\lambda c^{\top}$ is the first coordinate of $A_\lambda(x)=\pi_W(\lambda x)$.
As $\lambda$ ranges over $\Fm$, the elements $\lambda x$ range
over the whole field $\Fm$, and hence the elements
$\pi_W(\lambda x)$ range over the whole quotient $\Fm/W$.
This contradicts the fact that their first coordinate is always
zero.

Thus, for every $B$, there exists $\lambda\in\Fm$ such that the
first row of $A_\lambda$ coincides with the first row of $B$.
Hence, the matrix representing $B-A_\lambda$ has a zero row,
and $\mathrm{d}_{\mathrm{rk}}(B,\mathcal A_{V,W})\leq m-\delta-1$.
It follows that $\rho(\mathcal A_{V,W})\leq\min\{d-1,m-\delta-1\}$. Since $d=n-\delta$, we have
$$\rho(\mathcal A_{V,W})\leq\min\{d-1,m-\delta-1\}=\min\{n-\delta-1,m-\delta-1\}=\min\{m,n\}-\delta-1.$$
By Theorem \ref{thm:assmat} we have
$$\rho(\mathcal C)=\delta+\max_{\substack{W\subseteq\Fm\\\dim_{\Fq}(W)=\delta}}\rho(\mathcal A_{V,W})\leq\min\{m,n\}-1.$$
Equality holds if and only if the maximum over $W$ is equal to
$\min\{m,n\}-\delta-1$, which is equivalent to the existence of an
$\Fq$-subspace $W\subseteq\Fm$ of dimension $n-d$ such that
$\rho(\mathcal A_{V,W})=\min\{m,n\}-\delta-1$.

\end{proof}

\begin{remark}
Fix an $\Fq$-subspace $W\subseteq\Fm$ of dimension $\delta$ and let $B\in\mathrm{Hom}_{\Fq}(V,\Fm/W)$. Choose an $\Fq$-linear map $f\colon V\longrightarrow\Fm$ such that $B=\pi_W\circ f$. Whenever the associated subspace $U_{f,W}$ is a covering lift, the proof of Theorem \ref{thm:assmat} gives $$n-\mu_P(U_{f,W})=\delta+\mathrm{d}_{\mathrm{rk}}(B,\mathcal A_{V,W}).$$ Consequently, we can characterize the deep holes of $\mathcal A_{V,W}$, namely the elements $B\in\mathrm{Hom}_{\Fq}(V,\Fm/W)$ such that $\drk(B,\mathcal A_{V,W})=\rho(\mathcal A_{V,W})$. Indeed, $B$ is a deep hole of $\mathcal A_{V,W}$ if and only if $U_{f,W}$ maximizes $n-\mu_P(U_{f,W})$ among the covering lifts having kernel $\{0\}\times W$.

In particular, if $W$ satisfies the equality in Corollary \ref{cor:assmat}, then an $\Fq$-linear map $f\colon V\longrightarrow\Fm$ satisfies \eqref{cond:d} if and only if $\pi_W\circ f$ is a deep hole of $\mathcal A_{V,W}$.
\end{remark}

\subsection{Boundary cases}

The following theorem determines the covering radius for some boundary values of the minimum distance, using known constructions of clubs.

\begin{theorem}\label{thm:boundary}
Let $\mathcal{C}$ be an $[n,1,d]_{q^m/q}$ code, let $\delta=n-d$ and suppose that $d\in\{1,2,\min\{m,n\}-1,\min\{m,n\}\}.$
Then
$$
\rho(\mathcal{C})=
\begin{cases}
m, & \text{if } \delta\geq m,\\
\min\{m,n\}-1, & \text{if } \delta<m.
\end{cases}
$$
\end{theorem}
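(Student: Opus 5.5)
The case $\delta\geq m$ is exactly Corollary~\ref{cor:uppb}, which gives $\rho(\mathcal C)=m$. So I assume $\delta<m$. Corollary~\ref{cor:uppb} then gives $\rho(\mathcal C)\le\min\{m,n\}-1$, and it remains to prove the reverse inequality. For this I will use criterion (ii) of Proposition~\ref{prop:caractd}: I exhibit a $\delta$-dimensional $W\subseteq\Fm$ and an $\Fq$-linear $f:V\to\Fm$ with
$$\dim_{\Fq}\{x\in V: f(x)-\lambda x\in W\}\le n-\min\{m,n\}+1\quad\text{for all }\lambda\in\Fm.$$
I treat the four values of $d$ separately.

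\emph{Case $d=1$.} The left-hand side is at most $d=1\le n-\min\{m,n\}+1$, so any choice of $W$ and $f$ works.

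\emph{Case $d=2$.} If $m<n$, the bound is $n-m+1\ge2=d$, so again any choice works. If $m\ge n$, I need that no $\lambda$ has $(f-\lambda\,\mathrm{id})(V)\subseteq W$. Write $V=\langle v_1,v_2\rangle_{\Fq}$ and $\xi=v_2/v_1\notin\Fq$. Take $W=v_1\langle 1,\xi,\dots,\xi^{\delta-1}\rangle_{\Fq}$, set $f(v_1)=0$ and $f(v_2)=\gamma$. A bad $\lambda$ satisfies $\lambda v_1\in W$ and $\gamma-\lambda v_2\in W$. This forces
$$\gamma\in W+\xi W=v_1\langle1,\dots,\xi^{\delta}\rangle_{\Fq},$$
which has dimension $\le\delta+1\le m-1$, because $\delta=n-2\le m-2$. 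The case $\lambda=0$ is included here. So choosing $\gamma$ outside this proper subspace works.

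\emph{Case $d=\min\{m,n\}$.} Take $f(x)=x^q$ and any $W$ of dimension $\delta$. The map $x\mapsto x^q-\lambda x$ has $\Fq$-kernel of dimension $\le1$. Hence the preimage of $W$ in $V$ has dimension $\le\delta+1=n-d+1=n-\min\{m,n\}+1$, as required.

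\emph{Case $d=\min\{m,n\}-1$.} This is the main obstacle, since $f(x)=x^q$ only gives the bound $\delta+1$, one too large. First let $m\ge n$, so $d=n-1$ and $\delta=1$. By Corollary~\ref{cor:corrclub} I need a scattered subspace of rank $n$ in $\Fm^2$ containing $P$ with weight $1$ and projecting onto $V$; since $\rho$ depends only on $V$ up to scalars (Lemma~\ref{lem:equivalence1d}), it suffices to project onto a scalar multiple of $V$. I plan to start from $S=\{(y,y^q):y\in Y\}$ with $\dim Y=n$ and $1\in Y$, which is scattered. Then I apply the $\Fm$-linear map $(x,z)\mapsto(z-x,x)$, which sends the weight-$1$ point $\langle(1,1)\rangle_{\Fm}$ to $P$. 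The image is scattered with $P$ of weight $1$, and its projection is $\{y^q-y:y\in Y\}$, an $(n-1)$-dimensional subspace of the trace-zero hyperplane $\ker\Trm$. To match $V$, I pick $\mu\ne0$ with $\Trm(\mu^{-1}v)=0$ for all $v\in V$. Such $\mu$ exists because the trace-dual of $V$ has dimension $m-n+1\ge1$. Then I set $Y$ to be the preimage of $\mu^{-1}V$ under $y\mapsto y^q-y$; it has dimension $n$ and contains $\Fq$. Finally I rescale the first coordinate by $\mu$. For $m<n$ with $d=m-1$, the case $n=m$ just proved shows that every $[m,1,m-1]_{q^m/q}$ code has covering radius $m-1$. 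Corollary~\ref{cor:taglik1} with $r=m-1$ and $s=0$ then gives $\rho(\mathcal C)\ge m-1$, which closes the proof.
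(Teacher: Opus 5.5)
\textbf{Gap in the case $d=2$, $m\ge n$.} Your other cases are fine. For $d=1$, and for $d=2$ with $m<n$, condition (ii) of Proposition~\ref{prop:caractd} holds trivially. The Frobenius argument for $d=\min\{m,n\}$ treats $m\ge n$ and $m<n$ at once, whereas the paper uses a pseudoregulus-type scattered subspace and then Corollary~\ref{cor:n>m}. Your construction for $d=\min\{m,n\}-1$ is essentially the paper's, and Corollary~\ref{cor:taglik1} legitimately replaces Corollary~\ref{cor:n>m} for $m<n$.

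The gap is that $W=v_1\langle 1,\xi,\ldots,\xi^{\delta-1}\rangle_{\Fq}$ has dimension $\min\{\delta,[\Fq(\xi):\Fq]\}$, not necessarily $\delta$. If $\xi$ lies in a proper subfield $\F_{q^t}$ with $t<\delta$, then $W=v_1\F_{q^t}$ has dimension $t<\delta$. For instance, take $m=n=6$, $\delta=4$ and $\xi\in\F_{q^2}\setminus\Fq$. Then condition (ii) of Proposition~\ref{prop:caractd} fails, because the associated subspace $U_{f,W}$ has dimension $d+t<n$ and is not a $\delta$-covering lift. Enlarging $W$ arbitrarily to dimension $\delta$ can destroy the property $W+\xi W\neq\Fm$, which is all your argument uses. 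This subfield phenomenon is exactly why the paper splits into three cases according to $\Fq(\beta)$. There it builds $T$ from $\F_{q^t}$-linear pieces before applying Theorem~\ref{thm:clubs_cod2}.

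\textbf{How to fix it.} Your reduction itself is correct. Writing $W=v_1W_0$, a bad $\lambda$ forces $\lambda\in W_0$ and $\gamma\in v_1(W_0+\xi W_0)$. So you only need a $\delta$-dimensional $W_0$ with $W_0+\xi W_0\neq\Fm$, and this needs no case distinction. Fix any $\Fq$-hyperplane $K$ of $\Fm$. Since $\xi^{-1}K$ is also a hyperplane, $\dim_{\Fq}(K\cap\xi^{-1}K)\ge m-2\ge n-2=\delta$. Choose $W_0\subseteq K\cap\xi^{-1}K$ of dimension $\delta$, so that $W_0+\xi W_0\subseteq K$, and then take $\gamma\notin v_1K$. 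With this replacement your proof is complete, and its $d=2$ step is shorter than the paper's.
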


\begin{proof}
For a $1$-dimensional code, as observed in Remark \ref{rem:n'd} the effective length coincides with its minimum distance $d$. Therefore, if $\delta\geq m$, the equality $\rho(\mathcal{C})=m$ follows from Proposition \ref{prop:uppb}. Hence, from now on, suppose that $\delta<m$ and let $V=\langle v_1,\ldots,v_d\rangle_{\Fq}$

\medskip

\underline{\textbf{Case $d=\min\{m,n\}$.}} Let $m\geq n$. An $[n,1,n]_{q^m/q}$ code has the form $\mathcal{C}=\langle (v_1,\ldots,v_n)\rangle_{\Fm}$, with $\dim_{\Fq}(\langle v_1,\ldots,v_n\rangle_{\Fq})=n$. We want to find $\widetilde U\in\mathcal{L}_\delta(V)$ being a scattered subspace. By taking $\alpha=(\alpha_1,\ldots,\alpha_n)=(v_1^q,\ldots,v_n^q)$ we obtain the $q$-system
\begin{equation*}
    \widetilde U=\langle(v_1,v_1^q),\ldots,(v_n,v_n^q)\rangle_{\Fq},
\end{equation*}
which is a scattered subspace of pseudoregulus type. Hence, by Corollary \ref{cor:corrclub} we get $\rho(\mathcal{C})= n-1=\min\{m,n\}-1$.

Let $m<n<2m$ and $\mathcal
C=\langle(v_1,\ldots,v_m,0,\ldots,0)\rangle_{\Fm}$ be an $[n,1,m]_{q^m/q}$ code. Then we can consider an $[m,1,m]_{q^m/q}$ code $\mathcal
C'=\langle(v_1,\ldots,v_m)\rangle_{\Fm}$ and from the previous case we know that there exists $\widetilde U\in\mathcal{L}_{m-d}(V)$ which is a scattered subspace of pseudoregulus type of rank $m$. Hence, by Corollary \ref{cor:n>m} we can conclude that $\rho(\mathcal{C})= m-1=\min\{m,n\}-1$.

\medskip
\underline{\textbf{Case $d=\min\{m,n\}-1$.}} 
Let $m\ge n$ and let $\mathcal{C}=\langle (v_1,\ldots,v_{n-1},0)\rangle_{\Fm}$ be an $[n,1,n-1]_{q^m/q}$ code, with $v_1,\ldots,v_{n-1}$ linearly independent over $\Fq$. In order to prove that $\rho(\mathcal C)\ge n-1$, we look for a $\widetilde U\in\mathcal{L}_\delta(V)$ which is a scattered subspace of rank $n$ containing $P=\langle(0,1)\rangle_{\Fm}$. Let $\theta$ denote the Frobenius automorphism of $\Fm$, i.e. $\theta(x)=x^q$ for every $x\in\Fm$ and consider $$f:=\theta-\operatorname{id}:x\in\Fm\longrightarrow x^q-x\in\Fm,$$ which is an $\Fq$-linear map with $\mathrm{ker}(f)=\Fq$ and $\mathrm{Im}(f)=\ker(\Trm)$, where $\Trm$ denotes the trace of $\Fm$ over $\Fq$, i.e. $\Trm(z)=z+z^q+\ldots+z^{q^{m-1}}$, $z\in\Fm$. Consider the orthogonal complement of $V$ with respect to the
nondegenerate trace bilinear form, i.e. $\langle x,y\rangle=\Trm(xy)$, $x,y\in\Fm$,  $$V^\perp=\{\mu\in\Fm : \Trm(\mu v)=0, \;\;\forall\; v\in V\}$$
and observe that $\lambda\in V^\perp$ if and only if $\lambda V\subseteq\ker(\Trm)=\mathrm{Im}(f)$. Since $\dim_{\Fq}(V^\perp)=m-n+1\ge 1$, there exists $\lambda\in V^\perp\setminus\{0\}$. Define $\widetilde V=\lambda V$, choose an $\Fq$-basis $\widetilde v_1,\ldots,\widetilde v_{n-1}$ of $\widetilde V$, and let $\widetilde{\mathcal C}=\langle(\widetilde v_1,\ldots,\widetilde v_{n-1},0)\rangle_{\Fm}.$ Since $\widetilde V=\lambda V$, by Lemma \ref{lem:equivalence1d} the codes
$\mathcal C$ and $\widetilde{\mathcal C}$ are equivalent. In particular, they have the same covering radius. Thus, after replacing $\mathcal C$ by an equivalent code and relabelling, we can assume that $V\subseteq\mathrm{Im}(f)$. Hence, for every $i\in\{1,\ldots,n-1\}$, there exists $x_i\in\Fm$ such that $v_i=f(x_i)$ and $0=v_n=f(x_n)$, with $x_n\in\Fq^\ast$, thus $\mathcal{C}=\langle(f(x_1),\ldots,f(x_n))\rangle_{\Fm}$. Moreover, $x_1,\ldots,x_n$ are linearly independent over $\Fq$. Indeed, if
$$y=\sum_{i=1}^{n-1}a_i x_i\in\langle x_1,\ldots,x_{n-1}\rangle_{\Fq}\cap\Fq,$$ then
$$0=f(y)=\sum_{i=1}^{n-1}a_i f(x_i)=\sum_{i=1}^{n-1}a_i v_i.$$
The $\Fq$-linear independence of $v_1,\ldots,v_{n-1}$ implies that $a_1=\cdots=a_{n-1}=0$, and hence $y=0$. Therefore,
$\langle x_1,\ldots,x_{n-1}\rangle_{\Fq}\cap\Fq=\{0\},$ and, since $x_n\in\Fq^\ast$, the elements $x_1,\ldots,x_n$ are
$\Fq$-linearly independent. Since $\mathcal{C}=\langle(f(x_1),\ldots,f(x_n))\rangle_{\Fm}$ with $x_1,\ldots,x_n$ linearly independent over $\Fq$, in order to obtain a scattered subspace it is sufficient to consider 
    \begin{equation*}
        \widetilde U=\langle (f(x_1),x_1),\ldots,(f(x_n),x_n)\rangle_{\Fq}.
    \end{equation*}
     This is a scattered subspace contained in one of pseudoregulus type of rank $n$, since it is equivalent to $\langle (x_1,x_1^q),\ldots,(x_n,x_n^q)\rangle_{\Fq}$. Hence, by Corollary \ref{cor:corrclub} we get $\rho(\mathcal{C})= n-1=\min\{m,n\}-1$.

Assume now that $m<n<2m-1$ and let $\mathcal
C=\langle(v_1,\ldots,v_{m-1},0,\ldots,0)\rangle_{\Fm}$ be an $[n,1,m-1]_{q^m/q}$ code. Then we can consider an $[m,1,m-1]_{q^m/q}$ code $\mathcal
C'=\langle(v_1,\ldots,v_{m-1},0)\rangle_{\Fm}$ and we know that in this case there exists $\widetilde U\in\mathcal{L}_{m-d}(V)$ which is a scattered subspace of rank $m$ containing $P=\langle(0,1)\rangle_{\Fm}$. Hence, by Corollary \ref{cor:n>m} we can conclude that $\rho(\mathcal{C})= m-1=\min\{m,n\}-1$.

\medskip

\underline{\textbf{Case $d=1$.}}
Let $m\geq n$ be positive integers. If we consider an $[n,1,1]_{q^m/q}$ code $\mathcal{C}$, up to equivalence we can write $\mathcal{C}=\langle(1,0,\ldots,0)\rangle_{\Fm}$ and we want to find $\widetilde U\in\mathcal{L}_\delta(V)$ which is an $(n-1)$-club. If we consider any $\Fq$-basis of $\Fm$, for instance let $\omega\in\Fm$ of degree $m$ over $\Fq$ and consider $\mathcal{B}=\{1,\omega,\ldots,\omega^{m-1}\}$ as $\Fq$-basis of $\Fm$. Take the $[n,2]_{q^m/q}$ system

\begin{equation*}
    \widetilde U=\langle(1,0),(0,1),(0,\omega),\ldots,(0,\omega^{n-2})\rangle_{\Fq},
\end{equation*}
this is an $(n-1)$-club of rank $n$ and $\pi(\widetilde U)=V$. With these choices
\begin{align*}
    &\wt_{\widetilde U}(\langle(0,1)\rangle_{\Fm})=n-1\\
    &\wt_{\widetilde U}(\langle(1,\beta)\rangle_{\Fm})\leq1, \quad \forall\beta\in\Fm.
\end{align*}
Hence, by Corollary \ref{cor:corrclub} we can conclude that $\rho(\mathcal{C})= n-1$.

\medskip

\underline{\textbf{Case $d=2$.}} Since $\delta=n-2<m$, we have $n\leq m+1$. We first consider the case $n=m$. Up to equivalence, we can write $\mathcal{C}=\langle(1,\beta,0,\ldots,0)\rangle_{\Fm}$,
$\beta\in\Fm\setminus\Fq.$

Our aim is to find $\widetilde U\in\mathcal{L}_\delta(V)$ which is a $\delta$-club of rank $m$. In coordinates, we need to find $\alpha=(\alpha_1,\ldots,\alpha_m)\in\Fm^m$ such that $\widetilde U=\langle(1,\alpha_1),(\beta,\alpha_2),(0,\alpha_3),\ldots,(0,\alpha_m)\rangle_{\Fq}$ is a $\delta$-club of rank $m$. Since the case $m=n=2$ was already covered, we can assume that $m\ge3$. By Theorem \ref{thm:clubs_cod2}, it is enough to find an $(m-2)$-dimensional $\Fq$-subspace $T$ of $\Fm$ containing $1$ and an element $\gamma\in\Fm\setminus(T+\beta T)$, where $$T+\beta T=\{t_1+\beta t_2 \;:\; t_1,t_2\in T\}.$$
Indeed, if $t_1,\ldots,t_{m-2}$ is an $\Fq$-basis of $T$, then
$$U_{\alpha}=\langle(1,1),(\beta,\gamma),(0,t_1),\ldots,(0,t_{m-2})\rangle_{\Fq}$$ is an $(m-2)$-club of rank $m$. We put $\alpha_1=1$ and we need to find $\alpha_3,\ldots,\alpha_m\in\Fm$ such that $T=\langle\alpha_3,\ldots,\alpha_m\rangle_{\Fq}$ is a subspace of $\Fq$-dimension $m-2$ and $\gamma=\alpha_2\notin T+\beta T$.  In order to obtain this result, we need to consider three different cases depending on which subfield contains $\beta$.
\begin{itemize}
    \item If $\beta$ is not contained in any proper subfield, i.e. $\Fq(\beta)=\Fm$, then we can take $T=\langle1,\beta,\ldots,\beta^{m-3}\rangle_{\Fq}$. Since $T+\beta T=\langle1,\beta,\ldots,\beta^{m-3},\beta^{m-2}\rangle_{\Fq}$, we can choose $\gamma=\beta^{m-1}$. In this case the $q$-system obtained is
    \begin{equation*}
        U_{\alpha}=\langle(1,1),(\beta,\beta^{m-1}),(0,1),(0,\beta),\ldots,(0,\beta^{m-3})\rangle_{\Fq},
    \end{equation*}
    and it is an $(m-2)$-club.
    \item Suppose that the smallest proper subfield containing $\beta$ is $\mathbb{F}_{q^t}$ with $t\geq3$, i.e. $\mathbb{F}_{q}(\beta)=\mathbb{F}_{q^t}$. We can take an $\mathbb{F}_{q^t}$-basis of $\Fm$, namely $\{1,\xi,\ldots,\xi^{r-1}\}$, where $r:=\frac{m}{t}$. We can take $T=\xi\mathbb{F}_{q^t}+\ldots+\xi^{r-1}\mathbb{F}_{q^t}+\langle1,\beta,\ldots,\beta^{t-3}\rangle_{\Fq}$, whose $\Fq$-dimension is  $\dim_{\Fq}(T)=t(r-1)+t-2=m-2$. Since $T+\beta T=\xi\mathbb{F}_{q^t}+\ldots+\xi^{r-1}\mathbb{F}_{q^t}+\langle1,\beta,\ldots,\beta^{t-2}\rangle_{\Fq}$, we can choose $\gamma=\beta^{t-1}$. In this second case, the $q$-system obtained is 
    \begin{equation*}
        \widetilde U=\langle(1,1),(\beta,\beta^{t-1}),(0,1),(0,\beta),\ldots,(0,\beta^{t-3}),(0,\xi),(0,\beta\xi),\ldots,(0,\beta^{t-1}\xi),\ldots,(0,\beta^{t-1}\xi^{r-1})\rangle_{\Fq},
    \end{equation*}
    which is an $(m-2)$-club.
   \item If $m$ is even, it can happen that $\Fq(\beta)=\mathbb{F}_{q^2}$. In this case we can consider an $\mathbb{F}_{q^2}$-basis of $\Fm$, namely $\{1,\xi,\ldots,\xi^{r-1}\}$ with $r:=\frac{m}{2}$, and take the $\Fq$-subspace $T=\langle1,\xi,\ldots,\xi^{r-2},\beta,\ldots,\beta\xi^{r-2}\rangle_{\Fq}$, which has $\Fq$-dimension $m-2$. In this case $\beta T=T$, $T$ is an $\mathbb{F}_{q^2}$-subspace and we can consider $\gamma=\xi^{r-1}$. Doing these choices we obtain the following $q$-system
   \begin{equation*}
       \widetilde U=\langle(1,1),(\beta,\xi^{r-1}),(0,1),(0,\xi),\ldots,(0,\xi^{r-2}),(0,\beta),\ldots,(0,\beta\xi^{r-2})\rangle_{\Fq}.
   \end{equation*}
\end{itemize}
Since it is always possible to find an $(m-2)$-covering lift of $V$ which is an $(m-2)$-club, by Corollary \ref{cor:corrclub} we can conclude that $\rho(\mathcal{C})= m-1$.

 Let $m>n$, the case $n=2$ has already been covered by the first case, so assume that $n\geq3$. Let $\mathcal{C}=\langle(1,\beta,0,\ldots,0)\rangle_{\Fm}$  be an $[n,1,2]_{q^m/q}$ code. If we consider $\mathcal{C}'=\langle (1,\beta,0,\ldots,0)\rangle_{\Fm}$ an $[m,1,2]_{q^m/q}$ code, hence there exist an $(m-2)$-subspace $T'=\langle 1,t'_2,\ldots,t'_{m-2}\rangle_{\Fq}$ and $\gamma \in \Fm\setminus (T'+\beta T')$ such that
$$\widetilde U'=\langle(1,1),(\beta,\gamma),(0,1)\ldots,(0,t'_{m-2})\rangle_{\Fq},$$
is an $(m-2)$-covering lift which is an $(m-2)$-club. Consider $T=\langle 1,t_2,\ldots,t_{n-2}\rangle_{\Fq}\subseteq T'$ to be an $(n-2)$-subspace, and define
$$\widetilde U=\langle(1,1),(\beta,\gamma),(0,1),(0,t_2),\ldots(0,t_{n-2})\rangle_{\Fq}.$$
Since for every $Q\neq P$, $\widetilde U\cap Q\subseteq \widetilde U'\cap Q$, then 
$$\wt_{\widetilde U}(Q)=\dim_{\Fq}(\widetilde U\cap Q)\leq\dim_{\Fq}(\widetilde U'\cap Q)=\wt_{\widetilde U'}(Q),$$
while $\wt_{\widetilde U}(P)=\dim_{\Fq}(T)=n-2$. Hence, $\widetilde U\in\mathcal{L}_{\delta}(V)$ and it is an $(n-2)$-club of rank $n$, thus by Corollary \ref{cor:corrclub} we get $\rho(\mathcal{C})= n-1$. 

 \medskip
 Let $n=m+1$ and let $\mathcal
C=\langle(1,\beta,0,\ldots,0)\rangle_{\Fm}$, with $\beta\in\Fm\setminus\Fq$, be an $[n,1,2]_{q^m/q}$ code. Then we can consider an $[m,1,2]_{q^m/q}$ code $\mathcal
C'=\langle(1,\beta,0,\ldots,0)\rangle_{\Fm}$ and we know that in this case there exists $\widetilde U\in\mathcal{L}_{m-2}(V)$ which is an $(m-2)$-club of rank $m$. Hence, by Corollary \ref{cor:n>m} we can conclude that $\rho(\mathcal{C})= m-1$.

\end{proof}

By Theorem \ref{thm:boundary} and  Corollary \ref{cor:taglik1} we can derive another result on the covering radius of 1-dimensional rank-metric codes.

\begin{proposition} \label{prop:taglik1r12}
    Let $n\ge m$ and let $s\in\{0,\ldots,n-m\}$. Then
    \begin{enumerate}[label=(\arabic*)]
        \item Every $[n,1,s+1]_{q^m/q}$ code has covering radius  at least $m-1$.
        \item Every $[n,1,s+2]_{q^m/q}$ code has covering radius  at least $m-1$.
    \end{enumerate}
\end{proposition}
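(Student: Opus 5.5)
The plan is to apply Corollary~\ref{cor:taglik1} with $r=1$ and $r=2$. That corollary needs only one input: every $[m,1,r]_{q^m/q}$ code has covering radius exactly $m-1$. We will obtain this from Theorem~\ref{thm:boundary} by specialising to length $n=m$.

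For part (1), take an $[m,1,1]_{q^m/q}$ code. Here $d=1$, so it lies in the range covered by Theorem~\ref{thm:boundary}. Its defect is $\delta=m-1<m$. The theorem therefore gives $\rho=\min\{m,m\}-1=m-1$. Since this holds for every $[m,1,1]_{q^m/q}$ code, Corollary~\ref{cor:taglik1} with $r=1$ (which satisfies $1\le r\le m$) shows that every $[n,1,s+1]_{q^m/q}$ code with $n\ge m$ and $0\le s\le n-m$ has covering radius at least $m-1$.

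For part (2), we first need $m\ge 2$, so that $r=2\le m$ and $[m,1,2]_{q^m/q}$ codes exist; this holds by the standing assumption $m>1$. Any $[m,1,2]_{q^m/q}$ code has $d=2$ and $\delta=m-2<m$. Theorem~\ref{thm:boundary} again gives $\rho=m-1$. Applying Corollary~\ref{cor:taglik1} with $r=2$ yields the claim for all $[n,1,s+2]_{q^m/q}$ codes.

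There are two points to check. First, the degenerate small cases should not break the argument. If $m=2$, then $r=2$ means $d=m$, which is still covered by Theorem~\ref{thm:boundary} since $d=\min\{m,n\}$ is allowed. Second, the resulting codes must actually exist, that is, $s+r\le\min\{m,n\}$. This is not really an obstacle: if no such code exists, the statement is vacuous. The underlying puncturing argument in Proposition~\ref{prop:tagli} works for any effective length $s+r$ with $s\le n-m$. Nothing beyond these citations is needed.
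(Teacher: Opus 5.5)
Your proposal is correct and follows the paper's own route. Theorem~\ref{thm:boundary} at length $n=m$ gives $\rho=m-1$ for every $[m,1,1]_{q^m/q}$ and every $[m,1,2]_{q^m/q}$ code. Corollary~\ref{cor:taglik1} with $r=1$ and $r=2$ then yields both statements, and your remarks on the case $m=2$ and on vacuous parameter ranges are accurate.
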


\subsection{Special cases} \label{subsec:special_cases}
We conclude this section on the covering radius of $1$-dimensional codes presenting an overview on some cases involving small values for the degree of the extension $m$. We focus on the cases $m\in\{3,4,5,6\}$ using the results appearing in Theorem \ref{thm:boundary}, Proposition \ref{prop:taglik1r12} and Proposition \ref{prop:boundGY}.
To avoid redundancy, Tables \ref{tab:m=3}--\ref{tab:m=6} display the range $n\geq m$ except for Table \ref{tab:m=6}, where the column $n=5$ is included. Indeed, every omitted case with $n<m$ is already determined by Theorem \ref{thm:boundary}, while $n=5$, $m=6$, $d=3$ is the unique admissible triple not covered by the boundary cases and requires attention.

\begin{proposition}\label{prop:special}
    Let $m\in\{3,4,5\}$ and let $\mathcal{C}$ be an $[n,1,d]_{q^m/q}$ code. If $n\ge m+d$, then $\rho(\mathcal{C})=m$. If $n<m+d$, then $\rho(\mathcal{C})=\min\{m,n\}-1$ except when $(m,n,d)=(5,5,3)$, in which case $\rho(\mathcal{C})=3$.
\end{proposition}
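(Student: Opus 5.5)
The plan is to split first by $\delta=n-d$ and then enumerate the finitely many triples $(m,n,d)$ that remain. If $n\ge m+d$, i.e. $\delta\ge m$, then Corollary~\ref{cor:uppb} gives $\rho(\mathcal C)=m$ at once. So assume $\delta<m$. Corollary~\ref{cor:uppb} then gives the upper bound $\rho(\mathcal C)\le\min\{m,n\}-1$, and it remains to prove the matching lower bound, except in the exceptional triple.

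\emph{Case $n<m$.} Here $d\le n\le m-1$. For $m\in\{3,4,5\}$ I would check that $d\in\{1,2,n-1,n\}$ always holds, except for $(m,n,d)=(5,4,\,\cdot)$ with $d\in\{1,2,3,4\}$, which is also covered since $3=n-1$. Theorem~\ref{thm:boundary} then settles every such case.

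\emph{Case $n\ge m$.} Now $d\le m$ and $n<m+d$. When $d\in\{1,2,m-1,m\}$, Theorem~\ref{thm:boundary} applies directly and gives $\rho=m-1$. For $m\in\{3,4\}$ these values exhaust all $d\le m$, so the argument is complete. For $m=5$ only $d=3$ is left, with $5\le n\le 7$.
\begin{itemize}
\item For $n\ge6$, write $d=3=s+2$ with $s=1\le n-m$. Proposition~\ref{prop:taglik1r12}(2) then gives $\rho\ge m-1=4$, and equality follows from the upper bound.
\item For $n=m=5$, $d=3$, we have $\delta=2$. By Corollary~\ref{cor:corrclub}, $\rho(\mathcal C)=4$ would require a $2$-covering lift that is a $2$-club of rank $5$ in $\F_{q^5}^2$. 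Theorem~\ref{thm:no2club} rules this out, so $\rho\le 3$.
\end{itemize}

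\emph{Lower bound $\rho\ge3$ for $[5,1,3]_{q^5/q}$.} I would derive this by puncturing. Delete one of the two zero coordinates to obtain an $[4,1,3]_{q^5/q}$ code $\mathcal C'$. It has $n=4<m$ and $d=n-1$, so Theorem~\ref{thm:boundary} gives $\rho(\mathcal C')=3$. Let $\alpha'$ realise this distance, and extend it by an arbitrary entry in the deleted coordinate to get $\alpha$. As in the proof of Proposition~\ref{prop:tagli}, puncturing does not increase rank distances, so $\drk(\alpha,\mathcal C)\ge 3$.

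The key obstacle is the exceptional triple. The upper bound there depends entirely on the nonexistence of $2$-clubs of rank $5$, so the main care is to check that Corollary~\ref{cor:corrclub} applies in the required direction: $m\ge n$ holds, and the weight-$2$ point is forced to be $P$ by Lemma~\ref{lem:cl_weights}. The rest is bookkeeping over the cases.
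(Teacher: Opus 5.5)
Your proof is correct, and its case analysis matches the paper's. Theorem~\ref{thm:boundary} handles every $d\in\{1,2,\min\{m,n\}-1,\min\{m,n\}\}$, which covers all cases for $m\in\{3,4\}$ and all cases with $n<m$. Proposition~\ref{prop:taglik1r12}(2) with $s=1$ handles $[6,1,3]_{q^5/q}$ and $[7,1,3]_{q^5/q}$. Corollary~\ref{cor:corrclub} together with Theorem~\ref{thm:no2club} gives $\rho\le 3$ for $(m,n,d)=(5,5,3)$.

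The only genuine difference is how you get the lower bound $\rho\ge 3$ in the exceptional case.
\begin{itemize}
\item The paper takes it from the Gadouleau--Yan bound of Proposition~\ref{prop:boundGY}, which for $m=n=5$, $k=1$ gives $\lfloor 5-\sqrt{7}\rfloor+1=3$.
\item You delete a zero coordinate and apply the case $d=\min\{m,n\}-1$ of Theorem~\ref{thm:boundary} to the resulting $[4,1,3]_{q^5/q}$ code. You then lift a deep hole, using the monotonicity $\rho(\mathcal C)\ge\rho(\Pi(\mathcal C,\mathcal I))$ that also underlies the proof of Proposition~\ref{prop:tagli}. The paper itself uses this monotonicity later for $[7,1,4]_{q^6/q}$ codes.
\end{itemize}

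Your route keeps the whole proposition inside the paper's covering-lift and club machinery. It ultimately rests on the explicit scattered lift built from $x\mapsto x^q-x$, and it avoids the external counting estimate with the constant $K_q$. The paper's route is quicker to cite. It is also a generic bound that needs no information about $V$ or about shorter codes.
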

\begin{proof}
    For $m\in\{3,4\}$ and every value of $d$ and $n$ the statement follows directly from Theorem \ref{thm:boundary}. For $m=5$, every case different from $(m,n,d)=(5,5,3)$ can be retrieved by Theorem \ref{thm:boundary} and Proposition \ref{prop:taglik1r12}. For $(m,n,d)=(5,5,3)$, by Theorem \ref{thm:no2club} there is no $2$-club of rank $5$ in $\mathbb{F}_{q^5}^2$, hence for $d=3$ and $n=5$ we have that $\rho(\mathcal{C})\leq3$. Moreover, by the lower bound of Proposition \ref{prop:boundGY}, we can conclude that $\rho(\mathcal{C})=3$.
    
\end{proof} 

The results in Proposition \ref{prop:special} are summarized in the following tables.

\begin{table}[H]
\centering
\begin{tabular}{c|c|c|c|c}
 & $n=3$ & $n=4$ & $n=5$ & $n\geq6$ \\
\hline
$d=1$ & $\rho(\mathcal C)=2$ & $\rho(\mathcal C)=3$ & $\rho(\mathcal C)=3$ & $\rho(\mathcal C)=3$ \\
\hline
$d=2$ & $\rho(\mathcal C)=2$ & $\rho(\mathcal C)=2$ & $\rho(\mathcal C)=3$ & $\rho(\mathcal C)=3$\\
\hline
$d=3$ & $\rho(\mathcal C)=2$ & $\rho(\mathcal C)=2$ & $\rho(\mathcal C)=2$ & $\rho(\mathcal C)=3$\\
\end{tabular}
\caption{$m=3$}
\label{tab:m=3}
\end{table}

\begin{table}[H]
\centering
\begin{tabular}{c|c|c|c|c|c}
 & $n=4$ & $n=5$ & $n=6$ & $n=7$ & $n\geq8$\\
\hline
$d=1$ & $\rho(\mathcal C)=3$ & $\rho(\mathcal C)=4$ & $\rho(\mathcal C)=4$ & $\rho(\mathcal C)=4$ & $\rho(\mathcal C)=4$\\
\hline
$d=2$ & $\rho(\mathcal C)=3$ & $\rho(\mathcal C)=3$ & $\rho(\mathcal C)=4$ & $\rho(\mathcal C)=4$ & $\rho(\mathcal C)=4$\\
\hline
$d=3$ & $\rho(\mathcal C)=3$ & $\rho(\mathcal C)=3$ & $\rho(\mathcal C)=3$ & $\rho(\mathcal C)=4$ & $\rho(\mathcal C)=4$\\
\hline
$d=4$ & $\rho(\mathcal C)=3$ & $\rho(\mathcal C)=3$ & $\rho(\mathcal C)=3$ & $\rho(\mathcal C)=3$ & $\rho(\mathcal C)=4$\\
\end{tabular}
\caption{$m=4$}
\label{tab:m=4}
\end{table}

\begin{table}[H]
\centering
\begin{tabular}{c|c|c|c|c|c|c}
 & $n=5$ & $n=6$ & $n=7$ & $n=8$ & $n=9$ & $n\geq10$\\
\hline
$d=1$ & $\rho(\mathcal C)=4$ & $\rho(\mathcal C)=5$ & $\rho(\mathcal C)=5$ & $\rho(\mathcal C)=5$ & $\rho(\mathcal C)=5$ & $\rho(\mathcal C)=5$\\
\hline
$d=2$ & $\rho(\mathcal C)=4$ & $\rho(\mathcal C)=4$ & $\rho(\mathcal C)=5$ & $\rho(\mathcal C)=5$ & $\rho(\mathcal C)=5$ & $\rho(\mathcal C)=5$\\
\hline
$d=3$ & $\rho(\mathcal C)=3$ & $\rho(\mathcal C)=4$ & $\rho(\mathcal C)=4$ & $\rho(\mathcal C)=5$ & $\rho(\mathcal C)=5$ & $\rho(\mathcal C)=5$\\
\hline
$d=4$ & $\rho(\mathcal C)=4$ & $\rho(\mathcal C)=4$ & $\rho(\mathcal C)=4$ & $\rho(\mathcal C)=4$ & $\rho(\mathcal C)=5$ & $\rho(\mathcal C)=5$\\
\hline
$d=5$ & $\rho(\mathcal C)=4$ & $\rho(\mathcal C)=4$ & $\rho(\mathcal C)=4$ & $\rho(\mathcal C)=4$ & $\rho(\mathcal C)=4$ & $\rho(\mathcal C)=5$\\

\end{tabular}
\caption{$m=5$}
\label{tab:m=5}
\end{table}

 In particular, it is worth mentioning that every $[4,1]_{q^4/q}$ code has covering radius $3$. Hence, using this new geometric perspective, we can recover the result obtained by Zullo in \cite{Z2024}, who used the correspondence with rank-$\rho$-saturating systems.

\begin{example}
    For $m=5$, the cases $d=3$ and $n=6$ or $n=7$ provide two examples showing that the sufficient condition in Corollary \ref{cor:n>m} is not necessary. For instance, consider a $[6,1,3]_{q^5/q}$ code $\mathcal{C}=\langle(v_1,v_2,v_3,0,0,0)\rangle_{\mathbb{F}_{q^5}}$. By Proposition \ref{prop:taglik1r12} and Corollary \ref{cor:uppb} this code has covering radius $4$, but it does not satisfy the sufficient hypothesis in Corollary \ref{cor:n>m}. Since there is no $2$-club of rank $5$ in $\mathbb{F}_{q^5}^2$, there is no $2$-covering lift of $V=\langle v_1,v_2,v_3\rangle_{\Fq}$ which is a $2$-club of rank $5$ in $\mathbb{F}_{q^5}^2$.
\end{example}

For $m=6$ we have four cases that are still open using our results, while by Proposition \ref{prop:boundGY} we can at least say that the covering radius of $[5,1,3]_{q^6/q}$ codes can be either $3$ or $4$, while for $[6,1,3]_{q^6/q}$ and $[6,1,4]_{q^6/q}$ codes we can conclude that $4\le\rho(\mathcal{C})\le 5$. As shown in the proof of Proposition \ref{prop:tagli}, $\rho(\mathcal{C})\ge\rho(\Pi(\mathcal{C},I))$, for every $I\subseteq[n]$. Hence, if $\mathcal{C}$ is a $[7,1,4]_{q^6/q}$ code and we puncture it in a zero coordinate, we get $\rho(\mathcal{C})\ge 4$, hence also in this case $4\le\rho(\mathcal{C})\le 5$. 

Moreover, for $d=3$ and $n=6$ there is a construction of $3$-clubs, that can be derived from Proposition \ref{prop:3club}.

\begin{proposition} \label{prop:consm6}
    Let $\mathcal{C}=\langle(v_1,v_2,v_3,0,0,0)\rangle_{\F_{q^6}}$ be a $[6,1,3]_{q^6/q}$ code. If $V=\langle v_1,v_2,v_3\rangle_{\Fq}=\lambda\F_{q^3}$ with $\lambda\in\F_{q^6}^*$, then $\rho(\mathcal{C})=5$.
\end{proposition}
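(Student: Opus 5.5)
Since $m=n=6$ and $\delta=n-d=3<6$, Corollary~\ref{cor:uppb} already gives $\rho(\mathcal C)\le 5$. By Corollary~\ref{cor:corrclub}, equality $\rho(\mathcal C)=5$ holds if and only if $V$ admits a $3$-covering lift which is a $3$-club of rank $6$ with $P=\langle(0,1)\rangle_{\F_{q^6}}$ as the point of weight $3$. By Lemma~\ref{lem:equivalence1d} the covering radius depends only on the orbit of $V$ under $\F_{q^6}^*$, so we may take $V=\F_{q^3}$. The plan is to produce such a lift from Proposition~\ref{prop:3club} with $h=3$, $r=2$.

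Take $h=r=2$? No: we need $h=3$, $r=2$, so $m=6$ and $t=h(r-1)=3$ in the invertible case. Pick an $\Fq$-linear $f:\F_{q^3}\to\F_{q^3}$ whose graph is scattered, e.g.\ $f(x)=x^q$ (pseudoregulus type, scattered in $\F_{q^3}^2$). Choose $a\in\F_{q^3}$ so that $x\mapsto x^q-ax$ is invertible on $\F_{q^3}$. Taking $a=0$ works, since $x\mapsto x^q$ is bijective. Take any $b\neq0$, say $b=1$, and an $\F_{q^3}$-basis $\{1,\omega\}$ of $\F_{q^6}$. Then
$$U_{0,1}=\{(x_0^q,\,x_0+x_1\omega):x_0,x_1\in\F_{q^3}\}$$
is a $3$-club of rank $6$ by Proposition~\ref{prop:3club}.

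Next we identify the point of weight $3$ and match the lift to $V$. The projection of $U_{0,1}$ to the first coordinate is $\{x_0^q\}=\F_{q^3}=V$. Its kernel is $\{(0,x_1\omega):x_1\in\F_{q^3}\}$, which has dimension $3$ and lies in $P$. So $\wt_{U_{0,1}}(P)\ge 3$. As $U_{0,1}$ is a $3$-club, the unique point of weight $>1$ has weight $3$, and hence it must be $P$. Therefore $\pi(U_{0,1})=V$ and $\dim_{\Fq}U_{0,1}=6=d+\delta$, so $U_{0,1}$ is an $[6,2]_{q^6/q}$ system, i.e.\ it spans $\F_{q^6}^2$ because it is not contained in a point. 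Thus $U_{0,1}\in\mathcal L_3(V)$ is a $3$-club with $P$ of weight $3$.

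Applying Corollary~\ref{cor:corrclub} gives $\rho(\mathcal C)=5$. The main point to verify carefully is the hypothesis of Proposition~\ref{prop:3club}: the graph of $x\mapsto x^q$ over $\F_{q^3}$ must be scattered, and the chosen $a$ must make $f-a\,\mathrm{id}$ invertible so that $t=3$ rather than $4$. Both are standard: for the first, $\wt(\langle(1,\lambda)\rangle)=\dim\{x:x^{q}=\lambda x\}\le1$. The coordinate order, with the $\F_{q^3}$-part in the first entry, fits the convention that the lift projects onto $V$ via the first coordinate.
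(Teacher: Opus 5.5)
Your proposal is correct and follows essentially the same route as the paper: reduce to $V=\F_{q^3}$, apply Proposition~\ref{prop:3club} with $h=3$, $r=2$, $f(x)=x^q$, $a=0$, $b=1$ to obtain the $3$-club $U_{0,1}$, check that it is a $3$-covering lift of $V$ with $P$ of weight $3$, and conclude by Corollary~\ref{cor:corrclub}. In the final write-up, delete the stray ``Take $h=r=2$? No'' self-correction.
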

\begin{proof} Let $u_i=\lambda^{-1}v_i$, then $\mathcal{C}=\langle(u_1,u_2,u_3,0,0,0)\rangle_{\Fm}$, and by Corollary \ref{cor:corrclub} $\rho(\mathcal{C})=5$ if and only if there exists $\widetilde U\in\mathcal{L}_3(V)$ which is a $3$-club with rank $6$. Replacing the generator by $\lambda^{-1}(v_1,v_2,v_3,0,0,0)$, we can assume $V=\F_{q^3}$. Since $m=6=tr$ with $t=3$ and $r=2$, we can use the construction in Proposition \ref{prop:3club}. Choose $f(x)=x^q$, $a=0$, $b=1$ and observe that $\Gamma_f$ is scattered and $f$ is invertible since it is a field automorphism. Let $\omega\in\F_{q^6}$ such that $\{1,\omega\}$ is an $\F_{q^3}$-basis of $\F_{q^6}$, then
\begin{equation*}
    U_{0,1}=\left\{(x^q,x+y\omega) : x,y\in\F_{q^3}\right\}
\end{equation*}
is a $3$-club of rank $6$. The code $\mathcal{C}$ can be extended with this construction: let $x_i=u_i^{q^2}$, then $u_i=x_i^q$ and fixed $\{1,\xi,\xi^2\}$ an $\Fq$-basis of $\F_{q^3}$, we can consider 
\begin{equation*}
    \widetilde U=U_{0,1}=\langle(x_1^q,x_1),(x_2^q,x_2),(x_3^q,x_3),(0,\omega),(0,\xi\omega),(0,\xi^2\omega)\rangle_{\Fq}.
\end{equation*}
Observe that $\pi(\widetilde U)=V$ and it is a $[6,2]_{q^6/q}$ system, hence it is a $3$-covering lift of $V$. Moreover, $\widetilde U$ is a $3$-club with rank $6$ and $\wt_{\widetilde U}(\langle(0,1)\rangle_{\F_{q^6}})=3$. By Corollary \ref{cor:corrclub}, $\rho(\mathcal{C})=5$.
\end{proof}

Proposition \ref{prop:consm6} shows how to construct $[6,1,3]_{q^6/q}$ codes with covering radius $5$. Its converse is not true in general, i.e. if $\mathcal{C}=\langle(v_1,v_2,v_3,0,0,0)\rangle_{\F_{q^6}}$ is a $[6,1,3]_{q^6/q}$ code with covering radius $5$, not necessarily $V=\lambda\F_{q^3}$, with $\lambda\in\F_{q^6}^*$. By MAGMA computations \cite{BCP1997}, we have found an example of such situation. For $q=3$ we have found 28 equivalence classes of $1$-dimensional $[6,1,3]_{3^6/3}$ codes having covering radius $5$, besides the class arising from a scalar multiple of the subfield $\F_{3^3}$.

For instance, let $a\in\F_{3^6}$ be a root of the primitive polynomial $x^6+x+2\in\mathbb{F}_3[x]$, so that $a$ is a primitive element of $\F_{3^6}$ and take the code $\mathcal{C}=\langle(1,a^{282},a^{117},0,0,0)\rangle_{\F_{3^6}}$ with $V=\langle 1,a^{282},a^{117}\rangle_{\F_3}$. By MAGMA computations $\rho(\mathcal{C})=5$ and suppose that $V=\lambda\F_{3^3}$ with $\lambda\in\F_{3^6}^*$. Since $1\in V$, then $\lambda\in\F_{3^3}$, thus $V=\F_{3^3}$. However this is not possible since $a^{282}\notin\F_{3^3}^*=\langle a^{28}\rangle$.

It is worth pointing out that the first counterexample appears for $q=3$. Indeed we have performed an exhaustive computation for the covering radius of $[6,1,3]_{q^6/q}$ codes with $q\in\{2,3\}$. In the first case, the action of $\F_{2^6}^*$ on the set of three-dimensional $\F_2$-subspaces of $\F_{2^6}$ has 23 orbits. Among them, exactly one has covering radius $5$, and it is the class for which the $\F_2$-span of the nonzero coordinates is a scalar multiple of the subfield $\F_{2^3}$. All the remaining $22$ classes have covering radius $4$. 
For $q=3$ the situation is different and in this case the action of $\F_{3^6}^*$ on the set of three-dimensional $\F_3$-subspaces of $\F_{3^6}$ has 94 orbits. Among them, we have found $29$ classes with covering radius $5$, including the one of the counterexample.

\medskip
We have applied the same strategies as for $[6,1,3]_{q^6/q}$ for $q\in\{2,3\}$, also for the other three open cases. For $q\in\{2,3\}$, we have found that every $[6,1,4]_{q^6/q}$ code has covering radius $4$, every $[5,1,3]_{q^6/q}$ code has covering radius $4$ and we have also found that every $[7,1,4]_{q^6/q}$ code has covering radius $5$. All the scripts can be found at \url{https://github.com/AlessiaVallefuoco/Covering-radius-via-clubs}.

\medskip
The following table shows the results for $m=6$ and every value of $q$. 
\begin{table}[H]
\centering
{\small
\setlength{\tabcolsep}{4.5pt}
\begin{tabular}{c|c|c|c|c|c|c|c|c}
& $n=5$ & $n=6$ & $n=7$ & $n=8$ & $n=9$ & $n=10$ & $n=11$ & $n\geq12$\\
\hline
$d=1$ & $\rho(\mathcal C)=4$ & $\rho(\mathcal C)=5$ & $\rho(\mathcal C)=6$ & $\rho(\mathcal C)=6$ & $\rho(\mathcal C)=6$ & $\rho(\mathcal C)=6$ & $\rho(\mathcal C)=6$ & $\rho(\mathcal C)=6$\\
\hline
$d=2$ & $\rho(\mathcal C)=4$ & $\rho(\mathcal C)=5$ & $\rho(\mathcal C)=5$ & $\rho(\mathcal C)=6$ & $\rho(\mathcal C)=6$ & $\rho(\mathcal C)=6$ & $\rho(\mathcal C)=6$ & $\rho(\mathcal C)=6$\\
\hline
$d=3$ & $\rho(\mathcal C)\in\{3^\ast,4\}$ & $\rho(\mathcal C)\in\{4,5\}^{\ast\ast}$ & $\rho(\mathcal C)=5$ & $\rho(\mathcal C)=5$ & $\rho(\mathcal C)=6$ & $\rho(\mathcal C)=6$ & $\rho(\mathcal C)=6$ & $\rho(\mathcal C)=6$\\
\hline
$d=4$ & $\rho(\mathcal C)=4$ & $\rho(\mathcal C)\in\{4,5^\ast\}$ & $\rho(\mathcal C)\in\{4^\ast,5\}$ & $\rho(\mathcal C)=5$ & $\rho(\mathcal C)=5$ & $\rho(\mathcal C)=6$ & $\rho(\mathcal C)=6$ & $\rho(\mathcal C)=6$\\
\hline
$d=5$ & $\rho(\mathcal C)=4$ & $\rho(\mathcal C)=5$ & $\rho(\mathcal C)=5$ & $\rho(\mathcal C)=5$ & $\rho(\mathcal C)=5$ & $\rho(\mathcal C)=5$ & $\rho(\mathcal C)=6$ & $\rho(\mathcal C)=6$\\
\hline
$d=6$ & -- & $\rho(\mathcal C)=5$ & $\rho(\mathcal C)=5$ & $\rho(\mathcal C)=5$ & $\rho(\mathcal C)=5$ & $\rho(\mathcal C)=5$ & $\rho(\mathcal C)=5$ & $\rho(\mathcal C)=6$
\end{tabular}
}
\caption{$m=6$}
\label{tab:m=6}
\vspace{0.5em}
\begin{minipage}{0.95\textwidth}
\footnotesize
\raggedright
$(\ast):\;\,$ MAGMA computations show that, at least for $q\in\{2,3\}$, these values do not occur.\\
$(\ast\ast):$ MAGMA computations show that, at least for $q\in\{2,3\}$, both values occur.
\end{minipage}
\end{table}
It is worth pointing out that, among the cases considered above, the
$[6,1,3]_{q^6/q}$ cases with $q\in\{2,3\}$ are the ones for which our computations exhibit codes with the same length and minimum distance but different covering radii: both $\rho(\mathcal C)=4$ and $\rho(\mathcal C)=5$ occur. Thus, at least for $q\in\{2,3\}$, in these cases the covering radius is not determined by the parameters $n$ and $d$ alone.

\section{Comparison with known results} \label{sec:comparison}
The aim of this section is to compare our approach with some known results on the covering radius of linear rank-metric codes. 

\subsection{Gadouleau and Yan results}
Let us start by analyzing the results on the covering radius of linear rank-metric codes proved by Gadouleau and Yan in \cite{GY2008}.

\begin{proposition} \label{prop:boundGY}
    Let $\mathcal{C}$ be an $[n,k]_{q^m/q}$ code with $m\ge n$. Then
\begin{equation*}
    \left\lfloor\frac{m+n-\sqrt{(m+n)^2-4(mn-mk-2)}}{2}\right\rfloor+1\le\rho(\mathcal{C})\le n-k.
\end{equation*}
In particular if $m=n$, then
     \begin{equation} \label{eq:lowboundGY}
        \left\lfloor m-\sqrt{mk+2}\right\rfloor+1\le\rho(\mathcal{C})\le m-k
    \end{equation}
\end{proposition}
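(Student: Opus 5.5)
The upper bound $\rho(\mathcal C)\le n-k$ is the standard systematic-form argument. Fix a generator matrix $G$ of $\mathcal C$. Since $G$ has $\Fm$-rank $k$, some $k$ columns are $\Fm$-independent; call the set of their indices $J$. Given any $x\in\Fm^n$, the restriction of $\mathcal C$ to the coordinates in $J$ is all of $\Fm^k$. Hence there is $c\in\mathcal C$ with $c_j=x_j$ for every $j\in J$. Then $x-c$ has at most $n-k$ nonzero entries, so $\rk(x-c)\le n-k$. This holds without using $m\ge n$.

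For the lower bound I would use a sphere-covering count. The balls of radius $\rho=\rho(\mathcal C)$ centred at the $q^{mk}$ codewords cover $\Fm^n$, which has $q^{mn}$ elements. So
\[ q^{mk}\,|B_\rho|\ge q^{mn}, \]
where $|B_\rho|=\sum_{i\le\rho}\binom{n}{i}_q\prod_{j=0}^{i-1}(q^m-q^j)$ is the number of $m\times n$ matrices over $\Fq$ of rank at most $\rho$. The key estimate I would use is that the number of matrices of rank exactly $i$ is below $c\,q^{i(m+n-i)}$ for a small constant $c$ (for instance $c<4$, via the Gaussian binomial and the product bound $\prod_j(1-q^{-j})^{-1}$). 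Summing over $i\le\rho$ for $\rho\le\min\{m,n\}$ gives $|B_\rho|<q^{\rho(m+n-\rho)+2}$; getting exponent $+2$ here matches the $-2$ inside the root.

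Combining the two inequalities gives $mk+\rho(m+n-\rho)+2>mn$, that is
\[ \rho^2-(m+n)\rho+(mn-mk-2)<0. \]
Therefore $\rho$ lies strictly above the smaller root $\tfrac{m+n-\sqrt{(m+n)^2-4(mn-mk-2)}}{2}$. Since $\rho$ is an integer, it is at least the floor of that root plus one, which is the stated bound. For $m=n$ the discriminant simplifies to $4(mk+2)$, giving $\lfloor m-\sqrt{mk+2}\rfloor+1$, while the upper bound becomes $m-k$.

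The main obstacle is the precise constant in the ball-size estimate: the bound needs $|B_\rho|\le q^{\rho(m+n-\rho)+2}$ with the strictness compatible with the floor-plus-one. Since this proposition is quoted from Gadouleau and Yan \cite{GY2008}, I would rely on their ball bound, or check it by bounding $\binom{n}{i}_q<K_q q^{i(n-i)}$ and $\prod_j(q^m-q^j)\le q^{mi}$, then controlling the geometric tail of the sum over $i$.
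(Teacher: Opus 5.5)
Your proposal is correct and follows the paper's route. The paper cites \cite[Proposition 14]{GY2008}, whose lower inequality is exactly the sphere-covering bound $q^{mk}|B_\rho|\ge q^{mn}$ combined with $|B_\rho|<K_q^{-1}q^{\rho(m+n-\rho)}$. It then uses $K_q\ge K_2>1/4\ge q^{-2}$ to replace $\log_q K_q$ by $-2$. Your quadratic inequality, the floor-plus-one step, the $m=n$ simplification and the systematic-form upper bound all match.

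The one place to tighten is your self-contained justification of the ball estimate.
\begin{itemize}
\item The Gaussian binomial bound should read $\binom{n}{i}_q<K_q^{-1}q^{i(n-i)}$, not $K_q\,q^{i(n-i)}$.
\item ``Per-term constant $c<4$, then sum'' does not by itself give exponent $+2$ for every $\rho\le\min\{m,n\}$. Summing $K_q^{-1}q^{i(m+n-i)}$ over $i\le\rho$ multiplies the constant by $\sum_{j\ge0}q^{-j(m+n-2\rho)-j^2}$. For $q=2$ and $m+n-2\rho\le 1$ the total constant exceeds $4=q^2$ (about $5.4$ when $\rho=m=n$).
\item It can be rescued by first using your upper bound $\rho\le n-k\le n-1$. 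This forces $m+n-2\rho\ge 2$, and the constant drops to about $3.91$.
\end{itemize}

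A cleaner argument avoids the sum altogether. Every $m\times n$ matrix of rank at most $\rho$ has its row space inside some $\rho$-dimensional subspace of $\Fq^n$, so
\[
|B_\rho|\le\binom{n}{\rho}_q q^{m\rho}<K_q^{-1}q^{\rho(m+n-\rho)}\le K_2^{-1}q^{\rho(m+n-\rho)}<q^{\rho(m+n-\rho)+2}.
\]
These inequalities are strict, so the strict quadratic inequality, and hence the floor-plus-one bound, follows as you describe.
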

\begin{proof}
By \cite[Proposition 14]{GY2008},
$$n-\rho(\mathcal{C})-\frac{\rho(\mathcal{C})(n-\rho(\mathcal{C}))-\log_q K_q}{m}<k\le n-\rho(\mathcal{C}),$$
where $K_q=\prod_{j\ge1}(1-q^{-j})$. Moreover,
$$K_q\ge K_2
 \approx0.288788>\frac14\ge q^{-2},$$
and hence $\log_q K_q>-2$. This estimate and the first inequality yield $\rho(\mathcal{C})(m+n-\rho(\mathcal{C}))>mn-mk-2$ and solving the corresponding quadratic inequality we obtain
$$\rho(\mathcal{C})\ge
\left\lfloor
\frac{m+n-\sqrt{(m+n)^2-4(mn-mk-2)}}{2}
\right\rfloor+1.$$
The upper bound follows from $k\le n-\rho(\mathcal{C})$. Finally, for $m=n$, the claimed expression follows by simplification.

\end{proof}

The lower bound only works for the case $m\ge n$, therefore it does not say anything for the case $n>m$. In particular we have used the lower bound for $k=1$ in the special cases, when we could not use the characterization provided by the construction of clubs.  However, the upper bound can be stated in its generality, since it works for every value of $m$ and $n$. Indeed, let $\mathcal{C}$ is an $[n,k]_{q^m/q}$ code and let $G$ be a generator matrix of $\mathcal C$. Since $G$ has rank $k$, it contains $k$ linearly independent columns. Hence, for every $x\in\Fm^n$, there exists $c\in\mathcal C$ so that the first $k$ coordinates of $x-c$ are zero. Thus, $x-c$ has at most $n-k$ nonzero coordinates, and hence $\rk(x-c)\leq n-k$. Thus, we can always conclude that $\rho(\mathcal C)\leq\min\{m,n-k\}.$.

\subsection{Byrne and Ravagnani results}
We next consider two upper bounds proved by Byrne and Ravagnani in \cite{BR2017}. Although these results are stated for matrix rank-metric codes, they can be applied to our setting. Indeed, by choosing dual bases of $\Fm$ over $\Fq$ with respect to the trace form, the standard inner product dual of an $\Fm$-linear code is identified, under matrix expansion, with the trace product dual of the corresponding matrix code, preserving rank weights, see \cite[Theorem 21]{R2016}.

The first one is the dual distance bound.

\begin{theorem}[Dual distance bound, {\cite[Corollary 18]{BR2017}}]
    Let $\mathcal{C}$ be an $[n,k]_{q^m/q}$ code, then $$\rho(\mathcal{C})\leq \min\{m,n\}-\drk(\mathcal{C}^\perp)+1.$$
\end{theorem}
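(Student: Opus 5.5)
The plan is to derive the dual distance bound from the parity-check/saturating picture. Let $H$ be a parity-check matrix of $\mathcal{C}$, i.e. a generator matrix of $\mathcal{C}^\perp$, an $(n-k)\times n$ matrix over $\Fm$. For $x\in\Fm^n$, the syndrome $s=Hx^\top\in\Fm^{n-k}$ determines the coset $x+\mathcal{C}$. The distance $\drk(x,\mathcal{C})$ equals the least rank of a vector $e$ with $He^\top=s$. Hence it suffices to show the following: every syndrome $s$ is of the form $He^\top$ with $\rk(e)\le \min\{m,n\}-\drk(\mathcal{C}^\perp)+1$.

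Write $e=(e_1,\ldots,e_n)$ with $\langle e_1,\ldots,e_n\rangle_{\Fq}\subseteq E$, where $E$ is an $r$-dimensional $\Fq$-subspace of $\Fm$. Choose a basis $\epsilon_1,\ldots,\epsilon_r$ of $E$, so that $e=\sum_j \epsilon_j a_j$ with $a_j\in\Fq^n$. Then
$$He^\top=\sum_j \epsilon_j Ha_j^\top.$$
So the syndromes reachable with support in $E$ form the $\Fq$-space $\sum_j\epsilon_j\,\mathrm{Im}_{\Fq}(H|_{\Fq^n})$.

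I would then count dimensions. Set $d^\perp=\drk(\mathcal{C}^\perp)$. Consider the $\Fq$-linear map
$$\Phi_E:\Fq^{n\times r}\to\Fm^{n-k},\qquad (a_1,\ldots,a_r)\mapsto \sum_j\epsilon_jHa_j^\top.$$
Its kernel corresponds to vectors $e$ with entries in $E$ lying in $\ker H=\mathcal{C}$. Dualizing, $\Phi_E$ is surjective if and only if no nonzero $y\in\Fm^{n-k}$ satisfies $\Trm(\epsilon_j\,(yH)_i)=0$ for all $i,j$. That condition says exactly that every entry of the codeword $yH\in\mathcal{C}^\perp$ lies in $E^{\perp}$, the trace-dual of $E$, which has dimension $m-r$. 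Such a codeword has rank at most $m-r$. So if $m-r<d^\perp$, i.e. $r\ge m-d^\perp+1$, then $\Phi_E$ is onto and every syndrome has a preimage of rank $\le m-d^\perp+1$. This gives $\rho(\mathcal{C})\le m-d^\perp+1$ for any single $E$ of that dimension.

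For the $n$-side I would argue symmetrically on columns, which I expect to be the main obstacle. Choose an $\Fq$-subspace $F\subseteq\Fq^n$ of dimension $s=n-d^\perp+1$ and restrict $e$ to $e=\sum_j\epsilon_j a_j$ with $a_j\in F$ and arbitrary $\epsilon_j\in\Fm$; then $\rk(e)\le s$. Surjectivity fails only if some nonzero $yH$ satisfies $(yH)\cdot a=0$ for all $a\in F$. Equivalently, its matrix expansion has row space in $F^\perp$, which has dimension $d^\perp-1$, contradicting $\rk(yH)\ge d^\perp$.

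To handle this step cleanly, I would invoke the identification of $\Fm$-linear codes with matrix codes via dual bases (the cited \cite[Theorem 21]{R2016}). Then the two cases are literally transposes of each other, and \cite[Corollary 18]{BR2017} applies directly. Taking the better of the two bounds gives $\rho(\mathcal{C})\le\min\{m,n\}-\drk(\mathcal{C}^\perp)+1$.
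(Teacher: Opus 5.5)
Your argument is correct, and it takes a different route from the paper. The paper gives no proof of its own: it imports \cite[Corollary 18]{BR2017} for matrix codes and transfers it to the $\Fm$-linear setting via trace-dual bases and \cite[Theorem 21]{R2016}. You instead prove both halves directly via syndromes. Write $d^\perp=\drk(\mathcal C^\perp)$.
\begin{itemize}
\item For the $m$-side, fix an $\Fq$-subspace $E\subseteq\Fm$ with $\dim_{\Fq}(E)=m-d^\perp+1$. A nonzero functional $z\mapsto\Trm(y\cdot z)$ vanishing on all syndromes $He^\top$ with entries of $e$ in $E$ would produce a nonzero codeword $yH\in\mathcal C^\perp$ with every entry in the $(d^\perp-1)$-dimensional trace-dual $E^\perp$. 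This contradicts the minimum distance.
\item For the $n$-side, fix $F\subseteq\Fq^n$ with $\dim_{\Fq}(F)=n-d^\perp+1$. Every vector of $\langle F\rangle_{\Fm}$ has rank at most $n-d^\perp+1$. A nonzero $c=yH$ orthogonal to $F$ would have every row of its matrix expansion in $F^\perp$, hence rank at most $d^\perp-1$.
\end{itemize}
Both steps check out. You correctly use that $H$ has full row rank, so $yH\neq 0$, and the $\Fq$-independence of the basis to pass from $c\cdot a=0$ to $c_j\cdot a=0$ for each row.

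What your route buys is a self-contained, elementary proof inside the $\Fm$-linear framework. It also gives a slightly stronger, uniform conclusion: a single fixed $E$ (resp.\ $F$) of the stated dimension already reaches every syndrome. In the paper's $q$-system language, when $\mathcal C^\perp$ is nondegenerate your $F$-half says that any $(n-d^\perp+1)$-dimensional $\Fq$-subspace of the system spanned by the columns of $H$ spans $\Fm^{n-k}$. No hyperplane has weight exceeding $n-d^\perp$, so this is exactly a rank-$(n-d^\perp+1)$-saturating statement in the sense of \cite{BBB2023}. What the citation buys is brevity and generality, since \cite{BR2017} covers all $\Fq$-linear matrix codes, not only $\Fm$-linear ones.

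You can drop your final paragraph. Falling back on \cite[Corollary 18]{BR2017} there would just mean citing the statement you are proving. The ``transpose'' symmetry also exists only in the matrix-code world, because transposition does not preserve $\Fm$-linearity. Your direct column argument already settles the $n$-side, so nothing is lost by removing it.
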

In order to apply the dual distance bound in our setting, we recall the following characterization of degenerate $\Fm$-linear codes.

\begin{proposition}[{\cite[Proposition 3.2]{ABNR2022}}]
    Let $\mathcal{C}$ be an $[n,k]_{q^m/q}$ code, then $\drk(\mathcal{C}^\perp)=1$ if and only if $\mathcal{C}$ is degenerate.
\end{proposition}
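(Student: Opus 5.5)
The plan is to prove the two implications separately; both go through the $q$-system associated with $\mathcal C$ and the dual $\mathcal C^\perp$.

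\emph{Degenerate $\Rightarrow$ $\drk(\mathcal C^\perp)=1$.} Suppose $\mathcal C$ is degenerate. By the normal form \eqref{eq:genmatrix}, there is $A\in\mathrm{GL}(n,q)$ such that $GA$ has a zero last column. Equivalently, there is a nonzero $a\in\Fq^n$ (the last column of $A$) with $Ga^\top=0$, so every codeword $c$ satisfies $c\cdot a=0$. Hence $a\in\mathcal C^\perp$. Since its entries lie in $\Fq$ and it is nonzero, $\rk(a)=1$, and therefore $\drk(\mathcal C^\perp)=1$.

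\emph{$\drk(\mathcal C^\perp)=1$ $\Rightarrow$ degenerate.} Take $u\in\mathcal C^\perp$ with $\rk(u)=1$. Then $u=\lambda a$ with $\lambda\in\Fm^*$ and $a\in\Fq^n\setminus\{0\}$. Since $\mathcal C^\perp$ is $\Fm$-linear, $a\in\mathcal C^\perp$, so $Ga^\top=0$. This says $\sum_i a_iG^i=0$ is a nontrivial $\Fq$-linear relation among the columns of $G$. Hence the columns are $\Fq$-dependent and $n'<n$, i.e.\ $\mathcal C$ is degenerate.

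There is no real obstacle here. The only point needing care is the normalization $u=\lambda a$: a rank-one vector has all its entries in a one-dimensional $\Fq$-span $\langle\lambda\rangle_{\Fq}$, so dividing by $\lambda$ gives a vector with entries in $\Fq$. One should also note that the standard inner product, rather than a Hermitian one, is what makes $Ga^\top=0$ equivalent to $a\in\mathcal C^\perp$.
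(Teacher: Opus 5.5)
Your proof is correct and complete. The key point in both directions is that the rank-one vectors of $\mathcal C^\perp$ are, up to an $\Fm^*$-scalar, exactly the nonzero $a\in\Fq^n$ with $Ga^\top=0$, that is, the nontrivial $\Fq$-relations among the columns of $G$; for this reason the detour through the normal form \eqref{eq:genmatrix} in the first direction is not needed. The paper gives no proof of its own and only cites \cite[Proposition 3.2]{ABNR2022}, and your reasoning is the standard argument for this fact.
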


The dual distance bound, for every degenerate $\Fm$-linear code $\mathcal{C}$ gives the trivial upper bound, which is  $\rho(\mathcal{C})\leq \min\{m,n\}$, while for Proposition \ref{prop:uppb} we have that if $n<m+n'$ where $n'$ is the effective length, then $\rho(\mathcal{C})\leq \min\{m,n\}-1$. The second upper bound proved by Byrne and Ravagnani is the external distance bound.
\begin{theorem}[External distance bound, {\cite[Theorem 24]{BR2017}}]
    Let $m\ge n$ and let $\mathcal{C}$ be an $[n,k]_{q^m/q}$ code. Then $$\rho(\mathcal{C})\leq |\{i\in[n] : \mathcal{A}_i(\mathcal{C}^\perp)>0\}|,$$
    where 
    \begin{equation*}
    \mathcal{A}_i(\mathcal{C}^{\perp}) := |\{c \in \mathcal{C}^\perp : \rk(c) = i\}|.
\end{equation*}
\end{theorem}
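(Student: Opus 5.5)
The plan is to follow Delsarte's classical external distance argument, transported to the bilinear forms association scheme. Matrix expansion identifies $\Fm^n$ with $\Fq^{m\times n}$, and under it the rank weights and the duality described in the discussion preceding the dual distance bound are preserved. So I may work with $\mathcal C$ as an $\Fq$-linear matrix code and with $\mathcal C^\perp$ as its trace-dual. Set
$$S=\{i\in[n]:\mathcal A_i(\mathcal C^\perp)>0\},\qquad s=|S|.$$
It suffices to show that every $x\in\Fq^{m\times n}$ lies at rank distance at most $s$ from some codeword.

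\textbf{Step 1 (coset MacWilliams identity).} Fix $x$ and let $B_i(x)=|\{c\in\mathcal C:\rk(x-c)=i\}|$. Let $\chi$ be a nontrivial additive character of $\Fq$, and let $K_j(i)$ denote the $q$-Krawtchouk eigenvalues of the bilinear forms scheme. These are the values $\sum_{\rk(y)=j}\chi(\mathrm{Tr}(y^\top z))$ for any $z$ of rank $i$ (this uses $m\ge n$, so that the rank classes $0,\dots,n$ are exactly the classes of the scheme). Summing characters over the coset $x-\mathcal C$ and using the orthogonality of characters on $\mathcal C$ gives, for each $j$,
$$\sum_{i=0}^n B_i(x)K_j(i)=|\mathcal C|\sum_{\substack{y\in\mathcal C^\perp\\ \rk(y)=j}}\chi\bigl(\mathrm{Tr}(y^\top x)\bigr).$$
The right-hand side vanishes whenever $j\notin S\cup\{0\}$.

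\textbf{Step 2 (annihilator polynomial).} The bilinear forms scheme is $Q$-polynomial. Hence, with $z=q^{-i}$ or the analogous standard variable, there are polynomials $\Lambda_j$ of degree exactly $j$ such that $K_j(i)=\Lambda_j(\text{value at } i)$ up to normalisation. Consider
$$\alpha=\prod_{w\in S}\bigl(\Lambda_1-\Lambda_1(w)\bigr),$$
a polynomial of degree $s$ in this variable. Expand it as $\alpha=\sum_{j=0}^{s}a_j\Lambda_j$; by the $Q$-polynomial property, $\alpha$ lies in the span of the $\Lambda_j$ with $j\le s$. One must also make sure that the coefficient at the trivial class does not vanish.

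\textbf{Step 3 (conclusion).} Multiply the identity of Step 1 by $a_j$ and sum over $j\le s$. The left side becomes $\sum_i B_i(x)\alpha(i)$. On the right, the $j=0$ term is a positive multiple $c_0>0$, while the terms with $j\in S$ are to be killed. The way to kill them is to choose $\alpha$ as the Delsarte annihilator in the dual picture: swap the roles of $P$ and $Q$, so that $\alpha$ vanishes on the dual weights and is supported in $P$-degree at most $s$. Then $\sum_i B_i(x)\beta_i=c_0\ne0$ for some coefficients $\beta_i$ that are zero for $i>s$. Hence $B_i(x)\ne0$ for some $i\le s$, that is, $\drk(x,\mathcal C)\le s$.

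The main obstacle is Step 2/3: matching the correct polynomial structure. In the bilinear forms scheme, $P$-polynomiality (in rank distance) and $Q$-polynomiality hold simultaneously, and the scheme is self-dual. Using self-duality, I would identify the dual eigenvalues with the same $q$-Krawtchouk polynomials, write the annihilator as a degree-$s$ polynomial in the distance variable, and verify through the Christoffel–Darboux/three-term recurrence that its expansion in the $K_j$ is supported on $\{0\}\cup S$ with nonzero constant term. That verification is the step I would check most carefully, following Delsarte's original treatment of external distance.
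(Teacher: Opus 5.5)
The paper gives no proof of this statement; it quotes it from Byrne--Ravagnani. So your proposal has to stand on its own as a version of Delsarte's external-distance argument. The strategy is the right one, and so is the reduction to $\Fq^{m\times n}$ with the trace dual. Step~1 is also correct. Writing $b_j(x)=\sum_{y\in\mathcal C^\perp,\ \rk(y)=j}\chi(\mathrm{Tr}(y^\top x))$, you get $\sum_i B_i(x)K_j(i)=|\mathcal C|\,b_j(x)$, with $b_0(x)=1$ and $b_j(x)=0$ for $j\notin S\cup\{0\}$.

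\textbf{The gap is in Steps~2--3, which as written do not work.} Your $\alpha$ is a polynomial in the variable attached to the index $i$ of $K_j(i)$, that is, on the rank-distance side. Multiplying Step~1 by $a_j$ then gives $\sum_i B_i(x)\alpha(i)=|\mathcal C|\sum_{j\le s}a_jb_j(x)$. This fails in three ways. The vanishing of $\alpha$ on $S$ only suppresses the irrelevant terms $B_i(x)$ with $i\in S$. The coefficients $\alpha(i)$ are in general nonzero for $i>s$. The right-hand side still contains the uncontrolled terms $a_jb_j(x)$ with $j\in S$. You notice this and say the annihilator should live ``in the dual picture'', but you never supply the identity that makes the swap legitimate. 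Your closing description of what must be verified is also not the needed condition. You ask for an expansion in the $K_j$ supported on $\{0\}\cup S$ with nonzero constant term, via Christoffel--Darboux; if anything, the multipliers $a_j$ should be supported off $S$.

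\textbf{The missing ingredient is the inversion formula.} The character computation you used in Step~1 also gives $\sum_{j=0}^{n}K_l(j)K_j(i)=q^{mn}\delta_{li}$. Hence
$$B_l(x)=\frac{|\mathcal C|}{q^{mn}}\sum_{j\in\{0\}\cup S}b_j(x)K_l(j)\qquad(0\le l\le n).$$
Now use that $K_l(j)$ is a polynomial of degree exactly $l$ in $z=q^{-j}$ (Delsarte's generalised Krawtchouk polynomials). Put the annihilator on the dual variable: $f(z)=\prod_{w\in S}(z-q^{-w})$ has degree $s$, so $f(q^{-j})=\sum_{l=0}^{s}\beta_lK_l(j)$ for suitable $\beta_l$. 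Then
$$\sum_{l=0}^{s}\beta_lB_l(x)=\frac{|\mathcal C|}{q^{mn}}\sum_{j}b_j(x)f(q^{-j})=\frac{|\mathcal C|}{q^{mn}}\prod_{w\in S}\bigl(1-q^{-w}\bigr)\neq0.$$
So $B_l(x)\neq0$ for some $l\le s$, that is, $\drk(x,\mathcal C)\le s$. No recurrence or Christoffel--Darboux argument is needed. The support condition $l\le s$ is automatic from $\deg f=s$. The nonvanishing at the trivial class is just $q^{0}\neq q^{-w}$ for $w\ge1$.
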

The external distance bound for $k=1$ gives the same result as Corollary \ref{cor:uppb} just for $d\in\{1,n\}$, since in the other cases there exist codewords in the dual that achieve every value of weight between 1 and $n$.
\begin{proposition} \label{prop:wdist1dim}
    Let $m\ge n$ and let $\mathcal{C}$ be an $[n,1,d]_{q^m/q}$ code with $1<d<n$. Then $\mathcal{A}_i(\mathcal{C}^\perp)>0$ for every $i\in[n]$.
\end{proposition}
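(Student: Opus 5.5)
The plan is to write the dual code explicitly and build dual codewords of each prescribed rank in two independent pieces. By Remark~\ref{rem:n'd} we may take $\mathcal C=\langle(v_1,\ldots,v_d,0,\ldots,0)\rangle_{\Fm}$ with $v_1,\ldots,v_d$ $\Fq$-independent, and set $\delta=n-d\ge1$. Then
$$\mathcal C^\perp=\Bigl\{(u',w)\in\Fm^d\times\Fm^{\delta}:\textstyle\sum_{i=1}^d u_iv_i=0\Bigr\},$$
so the last $\delta$ coordinates $w$ are completely free. Writing $D=\langle(v_1,\ldots,v_d)\rangle_{\Fm}^{\perp}\subseteq\Fm^d$, every pair $(u',w)$ with $u'\in D$ is a dual codeword.

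The key observation concerns ranks. Fix $u'\in D$ of rank $a$. Since $a+\delta\le d+\delta=n\le m$, we can choose $w$ so that its entries together with those of $u'$ span a space of any dimension between $a$ and $a+\delta$. Concretely, take $b\le\delta$ entries of $w$ extending the $\Fq$-span of the entries of $u'$ to dimension $a+b$, and set the remaining entries of $w$ to $0$. Hence every rank in $[a,a+\delta]$ occurs in $\mathcal C^\perp$.

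Taking $u'=0$, and using $\delta\le m$, gives all ranks $1,\ldots,\delta$. It therefore suffices to show that $D$ contains vectors of every rank $a\in\{2,\ldots,d\}$. The intervals $[a,a+\delta]$ then cover $[2,n]$, and together with $[1,\delta]$ this covers all of $[n]$.

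For ranks $a\in\{2,\ldots,d\}$ in $D$ I would argue as follows. Fix $j\in\{2,\ldots,d\}$ and consider $D_j=\langle(v_1,\ldots,v_j)\rangle_{\Fm}^{\perp}\subseteq\Fm^j$. Padding a vector of $D_j$ with $d-j$ zeros gives an element of $D$ of the same rank, so it suffices to find a vector of rank exactly $j$ in $D_j$.

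Now $\langle(v_1,\ldots,v_j)\rangle_{\Fm}$ is a $[j,1,j]_{q^m/q}$ code with $j\le m$, so it is MRD. Its dual $D_j$ is therefore a $[j,j-1,2]_{q^m/q}$ MRD code. For MRD codes with $m\ge j$ the weight distribution is known explicitly, and $A_w>0$ for every $w$ with $\drk\le w\le j$. In particular $D_j$ contains a vector of rank $j$, which is what we need.

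For $j=2$ one can be fully explicit: $(v_2,-v_1)$ has rank $2$ because $v_1/v_2\notin\Fq$.

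The step I expect to be the main obstacle is the existence of a rank-$j$ vector in $D_j$, if one wants to avoid quoting MRD duality and the MRD weight distribution. In that case I would try a counting argument instead. Each vector of $D_j$ of rank at most $j-1$ is of the form $\lambda$ times a vector over $\Fq$ after a change of basis in $\mathrm{GL}(j,q)$. Bounding the number of such vectors by summing over the $\Fq$-hyperplanes of $\Fq^j$ should show that there are fewer than $|D_j|=q^{m(j-1)}$ of them when $m\ge j$.
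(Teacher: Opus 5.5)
Your proposal is correct and follows essentially the same route as the paper. Both arguments write $\mathcal C^\perp=\langle v\rangle_{\Fm}^{\perp}\times\Fm^{n-d}$, obtain the small ranks from the free block $w$, obtain the intermediate ranks from the weight distribution of the MRD code $\langle v\rangle_{\Fm}^{\perp}$, and reach the large ranks by adjoining $\Fq$-independent entries of $w$. Your reduction to full-rank vectors in the truncated duals $D_j$, with the optional counting over $\Fq$-rational hyperplanes, only replaces the appeal to the full MRD weight distribution by the weaker fact that each $D_j$ contains a vector of rank $j$.
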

\begin{proof}
    Let $\mathcal{C}=\langle(v_1,\ldots,v_d,0,\ldots,0)\rangle_{\Fm}$ be an $[n,1,d]_{q^m/q}$ code and denote $v=(v_1,\ldots,v_d)\in\Fm^d$, the dual of $\mathcal{C}$ has the form
    \begin{equation*}
        \mathcal{C}^\perp=\{(u,w)\in\Fm^d\times\Fm^{n-d}:(u,w)\cdot (v,0,\ldots,0)=0\}=\{(u,w)\in\Fm^n: u\in \langle v\rangle_{\Fm}^\perp, w\in\Fm^{n-d}\}.
    \end{equation*}
    If we consider $\mathcal{C}$ as isometrically embedded in $\Fm^d$, then $\langle v\rangle_{\Fm}^\perp$ is a $[d,d-1,2]_{q^m/q}$ code. 
    \begin{itemize}
    \item If $i=1$, then we can consider $u=(0,\ldots,0)$ and $w=(w_1,0,\ldots,0)$ with $w_1\in\Fm\setminus\{0\}$.
        \item If $1<i\le d$, the rank-weight distribution of the MRD code
$\langle v\rangle_{\Fm}^\perp$ (see \cite[Proposition 9]{GY2008MW}) implies that there exists $u\in\langle v\rangle_{\Fm}^\perp$ such that $\rk(u)=i$. Moreover, we take $w=(0,\ldots,0)$.
        \item If $d<i\le n$, then there exists $u\in \langle v\rangle_{\Fm}^\perp$ such that $\rk(u)=d$.  Moreover, we can consider  $w=(w_1,\ldots,w_{i-d},0,\ldots,0)\in\Fm^{n-d}$ such that $u_1,\ldots,u_d,w_1,\ldots,w_{i-d}$ are $\Fq$-independent.
        
    \end{itemize}
    Thus, we have just shown that for every $i\in[n]$, there exists $(u,w)\in\mathcal C^\perp$ such that $\rk((u,w))=i$.
    
\end{proof}

By Proposition \ref{prop:wdist1dim}, we can apply the external distance bound for 1-dimensional codes with $1<d<n$ and obtain 
$$\rho(\mathcal{C})\leq |\{i\in[n] : \mathcal{A}_i(\mathcal{C}^\perp)>0\}|=n,$$
while for $d=n$ we do not have codewords of weight 1 and for $d=1$ we do not have codewords of weight $n$, hence $\rho(\mathcal{C})\leq n-1$.

\subsection{Gruica, Ravagnani, Sheekey and Zullo results}

We conclude by comparing Proposition \ref{prop:caractd} and Theorem \ref{thm:assmat} with the geometric interpretation of the covering radius given in \cite[Theorem 6.3]{GRSZ2024}. First, let us briefly recall their setting. Let $\mathcal D\subseteq\Fq^{m\times n}$, with $m\le n$, be a rank-metric code with minimum distance $m$, and let
$$S_A=\{(x,xA):x\in\mathbb F_q^m\}\subseteq\Fq^m\times\Fq^n, \quad \mathcal A_{\mathcal D}=\{S_A:A\in\mathcal D\}.$$
Since $\mathcal{D}$ has minimum distance $m$, then $\mathcal A_{\mathcal D}$ is a partial $m$-spread whose elements are disjoint from $S_\infty=\{0\}\times\mathbb F_q^n$.

\begin{proposition}[{\cite[Theorem 6.3]{GRSZ2024}}]
Let $\mathcal D\subseteq\Fq^{m\times n}$, with $m\le n$, be a rank-metric code with minimum distance $m$ and let $t$ be a positive integer. Then $\rho(\mathcal D)\ge m-t$ if and only if there exists an $(\mathcal A_{\mathcal D},t)$-evasive $m$-subspace meeting $S_\infty$ trivially.
\end{proposition}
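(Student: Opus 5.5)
The plan is to translate both sides of the equivalence into a statement about a single matrix $B\in\Fq^{m\times n}$ and then compare them directly. The hypothesis that $\mathcal D$ has minimum distance $m$ only ensures that $\mathcal A_{\mathcal D}$ is a partial spread of $m$-subspaces (two distinct $A,A'$ give $S_A\cap S_{A'}\cong\{x:x(A-A')=0\}=\{0\}$), so that $(\mathcal A_{\mathcal D},t)$-evasiveness is meaningful in the sense of \cite[Definition 1.2]{GRSZ2024}. I do not expect it to be used otherwise.

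\emph{Step 1: covering radius.} By definition of the covering radius, exactly as in Lemma~\ref{Lemmacr}, $\rho(\mathcal D)\ge m-t$ holds if and only if there is $B\in\Fq^{m\times n}$ with $\rank(B-A)\ge m-t$ for every $A\in\mathcal D$. Since $B-A$ has $m$ rows, rank–nullity for the left action $x\mapsto x(B-A)$ on $\Fq^m$ rewrites this condition as
$$\dim_{\Fq}\{x\in\Fq^m: xA=xB\}\le t\qquad\text{for every }A\in\mathcal D.$$

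\emph{Step 2: graphs of matrices.} Next I will show that the $m$-dimensional $\Fq$-subspaces $S\subseteq\Fq^m\times\Fq^n$ with $S\cap S_\infty=\{0\}$ are exactly the graphs $S_B=\{(x,xB):x\in\Fq^m\}$ with $B\in\Fq^{m\times n}$. The projection of such an $S$ onto the first factor has kernel $S\cap S_\infty=\{0\}$, so it is injective. Since $\dim S=m$, it is then bijective onto $\Fq^m$. Composing its inverse with the projection onto the second factor gives an $\Fq$-linear map $\Fq^m\to\Fq^n$, i.e.\ a matrix $B$, with $S=S_B$. Conversely, every $S_B$ has dimension $m$ and meets $S_\infty$ only in $0$.

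\emph{Step 3: weights.} For $A\in\mathcal D$, a vector $(x,xB)$ lies in $S_A$ exactly when $xB=xA$. Hence the first projection gives an isomorphism
$$S_B\cap S_A\cong\{x\in\Fq^m:x(A-B)=0\},$$
so $\wt_{S_B}(S_A)=m-\rank(A-B)$. Therefore $S_B$ is $(\mathcal A_{\mathcal D},t)$-evasive if and only if $\rank(B-A)\ge m-t$ for all $A\in\mathcal D$. Combining this with Steps 1 and 2 yields both implications at once, with $B$ serving as the deep hole on one side and as the graph matrix on the other.

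The only point needing care is Step 2, the identification of $m$-subspaces disjoint from $S_\infty$ with graphs. It is a dimension count, and it mirrors the way covering lifts $U_{f,W}$ were parametrized by a linear map $f$ in the proof of Proposition~\ref{prop:caractd}, so I expect no real obstacle.
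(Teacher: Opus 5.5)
Your argument is correct and complete. The paper quotes this result from \cite{GRSZ2024} without proof, but your two key facts (that $m$-subspaces meeting $S_\infty$ trivially are exactly the graphs $S_B$, and that $\dim_{\Fq}(S_B\cap S_A)=m-\rank(A-B)$) are precisely the rank-intersection mechanism the paper singles out in its comparison, where it appears as $\dim_{\Fq}(\Gamma_{\widetilde f}\cap\Gamma_{A_\lambda})=d-\rank(\widetilde f-A_\lambda)$. Your remark that the minimum-distance hypothesis plays no role in the equivalence matches the paper's observation that this identity holds whether or not the graphs form a partial spread; with the definition recalled in Remark~\ref{rem:d:evasive}, which only requires the $S_A$ to have equal dimension, the hypothesis is not even needed for evasiveness to be defined.
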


The subspace $U_{f,W}$ of Remark \ref{rem:d:evasive} does not fit the framework of \cite[Theorem 6.3]{GRSZ2024} directly. Indeed, the family $\mathcal S=\{P_\lambda:\lambda\in\Fm\}$ is obtained from the Desarguesian spread, with $P$ as the missing spread element, where $U_{f,W}$ has dimension $n=d+\delta$ and satisfies $U_{f,W}\cap P=\{0\}\times W.$
In order to highlight this connection, we need to do the quotient over $\{0\}\times W$. Consider
$$\Psi_W\colon (x,y)\in V\times\Fm\longrightarrow (x,\pi_W(y))\in V\times(\Fm/W)$$
and, for $g\in\mathrm{Hom}_{\Fq}(V,\Fm/W)$, let $\Gamma_g=\{(x,g(x)):x\in V\}.$ 

Writing $\widetilde f=\pi_W\circ f$, we obtain
$\Psi_W(U_{f,W})=\Gamma_{\widetilde f}$ and $\Psi_W\bigl(P_\lambda\cap(V\times\Fm)\bigr)=\Gamma_{A_\lambda},$
where $A_\lambda\colon x\in V\longrightarrow\pi_W(\lambda x)\in \Fm/W.$
Moreover, $\ker(\Psi_W\cap U_{f,W})=\{0\}\times W$, so that the intersection with $P$ is trivial in the quotient. In particular, $\Gamma_{\widetilde f}$ meets $\{0\}\times(\Fm/W)$ trivially.

The relevant intersection dimensions are preserved in the following form:
$$
\begin{aligned}
\wt_{U_{f,W}}(P_\lambda)=\dim_{\Fq}(\{x\in V:f(x)-\lambda x\in W\})=\dim_{\Fq}(\ker(\widetilde f-A_\lambda))=d-\rk_{\Fq}(\widetilde f-A_\lambda).
\end{aligned}
$$
Thus, after choosing bases of $V$ and $\Fm/W$, the quotient construction yields exactly the rank-intersection mechanism underlying \cite[Theorem 6.3]{GRSZ2024}. More precisely, the family $\{\Gamma_{A_\lambda}:\lambda\in\Fm\}$ is the family of graphs associated with the auxiliary matrix code $\mathcal A_{V,W}$, while $\Gamma_{\widetilde f}$ is the graph associated with the ambient map $\widetilde f$.

Taking $t=n-\min\{m,n\}+1,$ and recalling that $d=n-\delta$, we have $d-t=\min\{m,n\}-\delta-1.$ Hence, the equivalence between conditions $(ii)$ and $(iii)$ in Proposition \ref{prop:caractd} is governed by the same rank-intersection identity appearing in \cite[Theorem 6.3]{GRSZ2024}. The difference is that, in our setting, this identity is available for every auxiliary code $\mathcal A_{V,W}$, independently of whether its graphs form a partial spread. Moreover, \cite[Theorem 6.3]{GRSZ2024} assumes that the matrix code has minimum distance equal to its number of rows. For $\mathcal A_{V,W}$, this corresponds, when no transposition is needed, to requiring every nonzero element of $\mathcal A_{V,W}$ to be injective. Equivalently, for every $\lambda\in\Fm$,
$$
\lambda V\cap W\neq\{0\}
\quad\Longrightarrow\quad
\lambda V\subseteq W.
$$
Under this condition, the distinct graphs $\Gamma_{A_\lambda}$ form a partial $d$-spread. In general this need not happen. For instance, if $0<\delta<d$, choose $v\in V\setminus\{0\}$ and $w\in W\setminus\{0\}$ and set $\lambda=w/v$. Then $\lambda V\cap W\neq\{0\}$, whereas $\lambda V\not\subseteq W$ because $\dim_{\Fq}(\lambda V)=d>\delta$.

Thus, after quotienting by $\{0\}\times W$, the characterization in Proposition \ref{prop:caractd} is governed by the same rank-intersection mechanism as \cite[Theorem 6.3]{GRSZ2024}, but without requiring the resulting family of graphs to form a partial spread. The additional feature of Theorem \ref{thm:assmat} is that the covering radius of the original code is recovered by optimizing over the auxiliary codes $\mathcal A_{V,W}$. The connection with clubs provides a further geometric ingredient which is not involved in \cite[Theorem 6.3]{GRSZ2024}

\section*{Declaration on the use of AI tools}
During the preparation of this work, the authors used generative AI tools (GPT-5.6 Sol) for two purposes only: to improve
the language and readability of the manuscript, and to assist in writing the MAGMA code used for the computations in Section~\ref{subsec:special_cases}.
All mathematical content, including the definitions, results and proofs, was developed by the authors. The authors reviewed and edited all AI-assisted text, checked the code and verified its output, and take full responsibility for the content of this publication.

\section*{Acknowledgments}
This research has been partially supported by the Italian National Group for Algebraic and Geometric Structures and their Applications (GNSAGA -- INdAM). A. Neri is supported by the University of Naples Federico II through the FRA2024 Project CUP E63C26001530001  ``NHACT -- New Horizons in Algebraic Coding Theory''.

\bibliographystyle{abbrvurl}
\bibliography{references.bib}

\end{document}